\DeclareSymbolFontAlphabet{\mathbb}{AMSb}
\DeclareSymbolFontAlphabet{\mathbbl}{bbold}
\newcommand{\prism}{{\mathlarger{\mathbbl{\Delta}}}}
\theoremstyle{plain}
\newtheorem{theorem}{Theorem}[section]
\newtheorem{proposition}[theorem]{Proposition}
\newtheorem{lemma}[theorem]{Lemma}
\newtheorem{corollary}[theorem]{Corollary}
\theoremstyle{definition}
\newtheorem{definition}[theorem]{Definition}
\newtheorem{remark}[theorem]{Remark}
 \newcommand{\crys}{\mathrm{crys}}
\newcommand{\cycl}{\mathrm{cycl}} 
 \newcommand{\F}{\mathbb{F}}
 \newcommand{\GL}{\mathrm{GL}}
 \newcommand{\HC}{\mathrm{HC}} \newcommand{\HH}{\mathrm{HH}}
 \renewcommand{\inf}{\mathrm{inf}}
\newcommand{\Q}{\mathbb{Q}} 
 \newcommand{\Spec}{\mathrm{Spec}}
\newcommand{\TC}{\mathrm{TC}} \newcommand{\THH}{\mathrm{THH}}
\newcommand{\TP}{\mathrm{TP}} \newcommand{\tilxi}{\tilde{\xi}}
 \newcommand{\Z}{\mathbb{Z}}
\newcommand{\blue}[1]{{\color{black} #1}}
\begin{document}

\title{The $p$-completed cyclotomic trace in degree $2$}

\author{Johannes Ansch\"{u}tz}
\address{Institut de Math\'ematiques de Jussieu, 4, place Jussieu, 
75252 Paris cedex 05, France}
\email{ja@math.uni-bonn.de} 
\thanks{During this project, J.A.\ was partially supported by the ERC 742608, GeoLocLang.}

\author{Arthur-C\'esar Le Bras}
\address{Universit\'e Sorbonne Paris Nord, LAGA, C.N.R.S., UMR 7539, 93430 Villetaneuse,
France}
 \email{lebras@math.univ-paris13.fr}

\begin{abstract}
We prove that for a quasi-regular semiperfectoid  $\Z_p^{\rm cycl}$-algebra $R$ (in the sense of Bhatt-Morrow-Scholze), the cyclotomic trace map from the $p$-completed $K$-theory spectrum $K(R;\Z_p)$ of $R$ to the
topological cyclic homology $\TC(R;\Z_p)$ of $R$ identifies on $\pi_2$ with a $q$-deformation of the logarithm.
\end{abstract}

\maketitle
\tableofcontents

\section{Introduction}
\label{sec:introduction}

Fix a prime $p$. The aim of this paper is to concretely identify in
degree $2$, for a certain class of $p$-complete rings $R$, the
$p$-completed cyclotomic trace
$$
\mathrm{ctr}\colon K(R;\Z_p)\to \TC(R;\Z_p)
$$
from the $p$-completed $K$-theory spectrum $K(R;\Z_p)$ of $R$ to the
topological cyclic homology $\TC(R;\Z_p)$ of $R$. Our main result is that on $\pi_2$ the $p$-completed cyclotomic trace is given by a $q$-logarithm
$$
\log_q(x):=\sum\limits_{n=1}^\infty
(-1)^{n-1}q^{-n(n-1)/2}\frac{(x-1)(x-q)\cdots (x-q^{n-1})}{[n]_q},
$$
which is a $q$-deformation of the usual logarithm (where $q$ is a parameter which will be defined later). Before stating a precise version of the theorem, let us try to put it in context and to explain what the involved objects are.

\subsection{$K$-theory and topological cyclic homology}
We start with $K$-theory. For any commutative ring $A$, Quillen defined in \cite{quillen_higher_algebraic_k_theory_I} the algebraic $K$-theory space $K(A)$
of $A$ as a generalization of the Grothendieck group $K_0(A)$ of vector bundles on the scheme $\mathrm{Spec}(A)$. The (connective) $K$-theory spectrum $K(A)$ of a ring $A$ is obtained by group completing\footnote{Cf.\ \cite{nikolaus_the_group_completion_theorem_via_localization_of_ring_spectra} for a discussion of homotopy-theoretic group completions and Quillen's $+$-construction.} the $\mathbb{E}_{\infty}$-monoid of vector bundles on $\Spec(A)$ whose addition is given by the direct sum. In other words, for the full $K$-theory one mimicks in a homotopy theoretic context the definition of $K_0(A)$ with the set of isomorphism classes of vector bundles replaced by the groupoid of vector bundles. Algebraic $K$-theory behaves like a cohomology theory but has the nice feature, compared to other cohomology theories, like \'etale cohomology, that it only depends on the category of vector bundles on the ring (rather than on the ring itself) and thus enjoys strong functoriality properties, which makes it a powerful invariant attached to $A$.

Unfortunately, the calculation of the homotopy groups
$$K_i(A):=\pi_i(K(A)), ~~ i\geq 1,$$ is in general rather untractable. There is for example a natural embedding
$$
A^\times\to \pi_1(K(A)),
$$
which is an isomorphism if $A$ is local, but the higher $K$-groups are much more mysterious. One essential difficulty comes from the fact that $K$-theory, although it is a Zariski (and even Nisnevich) sheaf of spaces (cf.\ \cite{thomason_higher_algebraic_k_theory_of_schemes_and_of_derived_categories}), does not satisfy \'etale descent. One could remedy this by \'etale sheafification, but one would lose the good properties of $K$-theory. This lead people to look for good \textit{approximations} of $K$-theory, at least after profinite completion : by this, we mean invariants, still depending only on the category of vector bundles on the underlying ring, satisfying \'etale descent - and therefore, easier to compute - and close enough to (completed) $K$-theory, at least in some range.

The work of Thomason, \cite{thomason_algebraic_k_theory_and_etale_cohomology}, provides a good illustration of this principle. Thomason shows that the \textit{$K(1)$-localization} of $K$-theory, with respect to a prime $\ell$ \textit{invertible in} $A$, satisfies \'etale descent\footnote{In fact, it even coincides with $\ell$-adic \'etale $K$-theory on connective covers.} and coincides with $\ell$-adically completed (for short: $\ell$-adic) $K$-theory in high degrees under some extra assumptions, later removed by Rosenschon-Ostaver, \cite{rosenschon_ostvaer_descent_for_k_theories}, buliding upon the work of Voevodsky-Rost. When the prime $p$ is not invertible in $A$, the situation is much more subtle. For instance, a theorem of Gabber \cite{gabber_k_theory_of_henselian_local_rings_and_henselian_pairs} shows that $\ell$-adic $K$-theory is insensitive to replacing $A$ by $A/I$ if $(A,I)$ forms an henselian pair ; in particular, the computation of $\ell$-adic K-theory of henselian rings (which form a basis of the Nisnevich topology) is reduced to the computation of the $\ell$-adic $K$-theory of fields. This is not true anymore for $p$-adic $K$-theory. Nevertheless, the recent work of Clausen-Mathew-Morrow, \cite{clausen_mathew_morrow_k_theory_and_topological_cyclic_homology_of_henselian_pairs}, expresses this failure in terms of another non-commutative invariant attached to $A$, the \textit{topological cyclic homology} of $A$, whose definition will be recalled below. Topological cyclic homology is related to $K$-theory via the \textit{cyclotomic trace}, cf.\ \cite[Section 10.3]{blumberg_gepner_tabuada_a_universal_characterization_of_higher_algebraic_k_theory}, \cite[Section 5]{boekstedt_hsiang_madsen_the_cyclotomic_trace_and_algebraic_k_theory_of_spaces},
$$
\textrm{ctr}\colon K(A)\to \TC(A).
$$
Clausen, Mathew and Morrow prove, extending earlier work of Dundas, Goodwillie and McCarthy \cite{dundas_goodwillie_mccarthy_the_local_structure_of_algebraic_k_theory} in the nilpotent case\footnote{This is not a generalization though, since the result of Dundas-Goodwillie-McCarthy applies also to non-commutative rings and is not restricted to finite coefficients.}, that the cyclotomic trace induces, for any ideal $I\subseteq A$ such that the pair $(A,I)$ is henselian, an isomorphism $$K(A,I)/n\cong \TC(A,I)/n$$ from the relative $K$-theory
$$
K(A,I)/n:=\mathrm{fib}(K(A)/n\to K(A/I)/n)
$$
to the relative topological cyclic homology
$$
\TC(A,I)/n:=\mathrm{fib}(\TC(A)/n\to \TC(A/I)/n),
$$
for \textit{any} integer $n$. This has the consequence that $p$-completed $\TC$ provides a good approximation of $p$-adic $K$-theory, at least for rings henselian along $(p)$: namely, it satisfies \'etale descent (because topological cyclic homology does) and coincides with $p$-adic $K$-theory in high degrees. Under additional hypotheses, one can even get better results: for instance, Clausen, Mathew and Morrow prove, among other things, that the cyclotomic trace induces an
isomorphism $$K(R;\Z_p)\cong \tau_{\geq 0}\TC(R;\Z_p)$$ for all rings $R$ which are henselian
along $(p)$ and such that $R/p$ is semiperfect (i.e., such that Frobenius is surjective), cf.\ \cite[Corollary 6.9.]{clausen_mathew_morrow_k_theory_and_topological_cyclic_homology_of_henselian_pairs}.

Examples of such rings are the \textit{quasi-regular semiperfectoid rings} of \cite{bhatt_morrow_scholze_topological_hochschild_homology}. A ring $R$ is called
quasi-regular semiperfectoid, if $R$ is $p$-complete with
bounded $p^\infty$-torsion\footnote{This means that there exists $N\geq 0$ such that $R[p^{\infty}]=R[p^N]$. This technical condition is useful when dealing with derived completions.}, the $p$-completed cotangent complex
$\widehat{L_{R/\Z_p}}$ has $p$-complete Tor-amplitude in $[-1,0]$ and there
exists a surjective morphism $R^\prime\to R$ with $R^\prime$
(integral) perfectoid. This class of rings is interesting
as for $R$ quasi-regular semiperfectoid the topological cyclic homology
$\pi_\ast(\TC(R;\Z_p))$ can be computed in more concrete terms.

Let us recall the description of topological cyclic homology $\pi_\ast(\TC(R;\Z_p))$ from \cite{bhatt_morrow_scholze_topological_hochschild_homology}, which builds
heavily on the foundational work of Nikolaus and Scholze \cite{nikolaus_scholze_on_topological_cyclic_homology}. For this, we need to spell some definitions. From now on, all spectra will be assumed to be $p$-completed. One starts with the ($p$-completed)
topological Hochschild homology spectrum $\THH(R;\Z_p)$ of $R$, which is
equipped with a natural $\mathbb{T}=S^1$-action and a
$\mathbb{T}$-equivariant map, the cyclotomic Frobenius,
$$
\varphi_{\cycl}\colon \THH(R;\Z_p)\to \THH(R;\Z_p)^{tC_p}
$$
to the Tate fixed points of the cyclic group
$C_p\subseteq \mathbb{T}$. Then one takes the homotopy fixed points,
the \textit{negative topological cyclic homology},
$$
\TC^{-}(R;\Z_p):=\THH(R;\Z_p)^{h\mathbb{T}}
$$
and the Tate fixed points, the \textit{periodic topological cyclic homology},
$$
\TP(R;\Z_p):=\THH(R;\Z_p)^{t\mathbb{T}}.
$$
From the cyclotomic Frobenius on $\THH(R;\Z_p)$ one derives a map\footnote{Here one needs \cite[Lemma II.4.2.]{nikolaus_scholze_on_topological_cyclic_homology} which implies $\TP(R;\Z_p)\cong (\THH(R;\Z_p)^{tC_p})^{h\mathbb{T}}$.}
$$
\varphi^{h\mathbb{T}}_\cycl\colon \TC^{-}(R;\Z_p)\to \TP(R;\Z_p).
$$
Then the topological cyclic homology $\TC(R;\Z_p)$ of $R$ is defined via the fiber sequence
$$
\TC(R;\Z_p)\to
\TC^{-}(R;\Z_p)\xrightarrow{\mathrm{can}-\varphi_\cycl^{h\mathbb{T}}}\TP(R;\Z_p),
$$
where $\mathrm{can}\colon \TC^{-}(R;\Z_p)\to \TP(R;\Z_p)$ is the canonical map
from homotopy to Tate fixed points. The ring
$$
\widehat{\prism}_R:=\pi_0(\TC^{-}(R;\Z_p))\cong \pi_0(\TP(R;\Z_p)).
$$
is $p$-complete, $p$-torsion free\blue{\footnote{Indeed, any element killed by $p$ is killed by $\varphi$, cf. the proof of \cite[Lemma 2.28]{bhatt_scholze_prisms_and_prismatic_cohomology}, and thus lies in all the steps of the Nygaard filtration.}} and the cyclotomic Frobenius
$\varphi^{h\mathbb{T}}_\cycl$ induces a Frobenius lift $\varphi$ on
$\widehat{\prism}_R$ (cf.\ \cite[Theorem 11.10]{bhatt_scholze_prisms_and_prismatic_cohomology}).

\begin{remark}
  \label{sec:k-theory-topological-remark-topological-and-concrete-prism}
The prismatic perspective of \cite{bhatt_scholze_prisms_and_prismatic_cohomology} gives an alternative description of $\widehat{\prism}_R$ : it is the completion with respect to the Nygaard filtration of the (derived) prismatic cohomology $\prism_R$ of $R$. In particular, using the theory of $\delta$-rings, one can give, when $R$ is a $p$-complete with bounded $p^{\infty}$-torsion quotient of a perfectoid ring by a regular sequence, a construction of $\widehat{\prism}_R$ as the Nygaard completion of a concrete prismatic envelope (cf. \cite[Proposition 3.12]{bhatt_scholze_prisms_and_prismatic_cohomology}). 
\end{remark}

The choice of a morphism $R^\prime\to R$
with $R^\prime$ perfectoid yields a distinguished element $\tilxi$ (\blue{up to a unit}) of the
ring $\widehat{\prism}_R$. Using $\tilxi$ one defines the Nygaard filtration
$$
\mathcal{N}^{\geq i}\widehat{\prism}_R:=\varphi^{-1}((\tilxi^i))
$$
on $\widehat{\prism}_R$. The graded
rings $\pi_\ast(\TC^-(R;\Z_p))$ and $\pi_\ast(\TP(R;\Z_p))$ are then concentrated in
even degrees and
$$
\begin{matrix}
  \pi_{2i}(\TC^-(R;\Z_p))\cong \mathcal{N}^{\geq i}\widehat{\prism}_R\\
  \pi_{2i}(\TP(R;\Z_p))\cong \widehat{\prism}_R
\end{matrix}
$$
for $i\in \Z$ (cf.\ \cite[Theorem 11.10]{bhatt_scholze_prisms_and_prismatic_cohomology}).\footnote{These identifications depend on the choice of a suitable generator $v\in \pi_{-2}(\TC^-(R;\Z_p))$. If $R$ is an algebra over $\Z_p^\cycl$ we will clarify our choice in \Cref{sec:p-compl-cycl} carefully.} Moreover, on $\pi_{2i}$ the cyclotomic Frobenius
$$
\varphi_\cycl^{h\mathbb{T}}\colon \pi_{2i}(\TC^-(R;\Z_p))\to \pi_{2i}(\TP(R;\Z_p))
$$
identifies with the divided Frobenius $\frac{\varphi}{\tilxi^i}$. Thus from the definition
of $\TC(R;\Z_p)$ we obtain exact sequences
$$
0\to \pi_{2i}(\TC(R;\Z_p))\cong {\widehat{\prism}_R}^{\varphi=\tilxi^i}\to
\mathcal{N}^{\geq
  i}\widehat{\prism}_R\xrightarrow{1-\frac{\varphi}{\tilxi^i}}
\widehat{\prism}_R\to \pi_{2i-1}(\TC(R;\Z_p))\to 0.
$$
As mentioned in \Cref{sec:k-theory-topological-remark-topological-and-concrete-prism}, the ring $\widehat{\prism}_R$ tends to be computable. For example, if $R$ is
perfectoid, then $\widehat{\prism}_R\cong A_{\inf}(R)$ is Fontaine's
construction applied to $R$ and if $pR=0$, then $\widehat{\prism}_R$ is
the Nygaard completion of the universal PD-thickening $A_{\crys}(R)$ of
$R$. Thus, for quasi-regular semiperfectoid rings the target of the cyclotomic trace is rather explicit.

\subsection{Main results}
The results of \cite{clausen_mathew_morrow_k_theory_and_topological_cyclic_homology_of_henselian_pairs} (together with those of \cite{bhatt_morrow_scholze_topological_hochschild_homology}) therefore give a way of computing higher $p$-completed $K$-groups of quasi-regular semiperfectoid rings. But there is at least one degree (except $0$) where one can be more explicit, without using the cyclotomic trace map: namely, after
$p$-completion of $K(R)$ there is a canonical morphism
$$
T_p(R^\times)\to \pi_2(K(R;\Z_p))
$$
from the Tate module $T_p(R^\times)$ of the units of $R$, which is an isomorphism in many
cases. The results explained in the previous paragraph show that the cyclotomic trace identifies $\pi_2(K(R;\Z_p))$ with
\[ \pi_2(\TC(R;\Z_p))\cong
  \widehat{\prism}_R^{\varphi=\tilxi}.
 \]
What does the composite map
 $$
  T_p(R^\times)\to
  \pi_2(K(R;\Z_p))\xrightarrow{\mathrm{ctr}}\pi_2(\TC(R;\Z_p))\cong
  \widehat{\prism}_R^{\varphi=\tilxi}
$$
look like? The main result of this paper, which we now state, provides a concrete description of it. Let $R$ be a quasi-regular semiperfectoid ring which admits a compatible system of
morphisms $\Z[\zeta_{p^n}]\to R$ for $n\geq 0$. These morphisms give rise to the
elements
$$
\varepsilon=(1,\zeta_p,\ldots)\in
R^\flat=\varprojlim\limits_{x\mapsto x^p} R\ ,\ q:=[\varepsilon]_{\theta}\in \widehat{\prism}_R
$$
and
$$
\tilxi:=\frac{q^p-1}{q-1}.
$$
Here
$$
[-]_{\theta}\colon R^\flat\to \prism_R
$$
is the Teichm\"uller lift coming from the surjection $\theta \colon \prism_R \to R$ (see the proof of \Cref{lemma_p_power_compatible_elements_in_the_reduction}).

\begin{theorem}[cf.\ \Cref{sec:conclusion-theorem-identification-of-cyclotomic-trace}]
  \label{sec:introduction-concrete-cyclotomic-trace-introduction}

  The composition\footnote{Cf.\ \Cref{sec:p-compl-cycl} for a more precise description of the isomorphism $\pi_2(\TC(R;\Z_p))\cong \widehat{\prism}_R^{\varphi=\tilxi}$. We note that it depends on the choice of some compatible system $\varepsilon=(1,\zeta_p,\zeta_{p^2},\ldots)$ of primitive $p^n$-th roots of unity.}
  $$
  T_p(R^\times)\to
  \pi_2(K(R;\Z_p))\xrightarrow{\mathrm{ctr}}\pi_2(\TC(R;\Z_p))\cong
  \widehat{\prism}_R^{\varphi=\tilxi}
$$
is given by the $q$-logarithm
$$
x\mapsto \log_q([x]_{\theta}):=\sum\limits_{n=1}^{\infty}
(-1)^{n-1}q^{-n(n-1)/2}\frac{([x]_{\theta}-1)([x]_{\theta}-q)\ldots
  ([x]_{\theta}-q^{n-1})}{[n]_q}.
$$
\end{theorem}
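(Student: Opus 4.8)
\emph{Strategy and reduction.}
The plan is to reduce, by functoriality, to a universal computation over a perfectoid ring, and there to recognize the class produced by the cyclotomic trace as the unique Frobenius eigenvector in $\widehat{\prism}_R$ with a prescribed ``$d\log$'' leading term, which turns out to be exactly the $q$-logarithm series. For the reduction: the map $T_p(R^\times)\to\pi_2(K(R;\Z_p))$, the cyclotomic trace, and the formation of $\widehat{\prism}_R$, $q$, $\tilxi$ and $[-]_\theta$ are all functorial in $R$, and the composite in question is $\Z_p$-linear. Given $R$ with its system $\varepsilon$ and $x=(x_n)_n\in T_p(R^\times)$, sending the two tautological compatible systems of $\Z_p^{\cycl}$ and of $\Z_p[\mu_{p^\infty}]^{\wedge}$ to $\varepsilon$ and to $x$ yields a ring map $R_0:=\Z_p^{\cycl}\widehat{\otimes}_{\Z_p}\Z_p[\mu_{p^\infty}]^{\wedge}\to R$; here $R_0$ is quasi-regular semiperfectoid and is covered by the perfectoid ring $\Z_p^{\cycl}\langle y^{1/p^\infty}\rangle$. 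Since $\log_q$ is a $p$-adically convergent series, it suffices to treat $R=R_0$; after a further harmless reduction to systems $x$ generating a free rank one submodule one may even take $R_0=\Z_p^{\cycl}\widehat{\otimes}_{\Z_p}\Z_p^{\cycl}$, which is perfectoid.

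\emph{The target over $R_0$.}
For $R_0$ perfectoid, $\THH$, $\TC^-$ and $\TP$ are concentrated in even degrees, and the fiber sequence defining $\TC$ gives $\pi_2(\TC(R_0;\Z_p))=\ker(\mathcal{N}^{\geq1}\widehat{\prism}_{R_0}\xrightarrow{1-\varphi/\tilxi}\widehat{\prism}_{R_0})=\widehat{\prism}_{R_0}^{\varphi=\tilxi}$, a subgroup of $\widehat{\prism}_{R_0}=\pi_2(\TP(R_0;\Z_p))$. Moreover $\mathcal{N}^{\geq1}\widehat{\prism}_{R_0}=\ker(\theta)$ in this case, and by \cite{bhatt_morrow_scholze_topological_hochschild_homology} the map $\pi_2(\TC^-(R_0;\Z_p))=\mathcal{N}^{\geq1}\widehat{\prism}_{R_0}\to\pi_2(\THH(R_0;\Z_p))$ is the projection onto $\mathcal{N}^{\geq1}/\mathcal{N}^{\geq2}$, and it is injective when restricted to $\pi_2(\TC(R_0;\Z_p))$. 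Thus $\mathrm{ctr}(x)$ is determined by two things: (i) it lies in $\widehat{\prism}_{R_0}^{\varphi=\tilxi}$, which is automatic since the trace factors through $\TC$, and (ii) its image in $\pi_2(\THH(R_0;\Z_p))=\mathcal{N}^{\geq1}/\mathcal{N}^{\geq2}$.

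\emph{Computation and identification with $\log_q$.}
For (ii) one invokes the explicit description in \cite{bhatt_morrow_scholze_topological_hochschild_homology} of $\THH$ of perfectoid rings and of the Bott element attached to a compatible system of $p$-power roots of unity: with the normalization fixed by the base case $R_0=\Z_p^{\cycl}$, $x=\varepsilon$ (where $\mathrm{ctr}(\varepsilon)=q-1=\log_q(q)$, because \cite{bhatt_morrow_scholze_topological_hochschild_homology,bhatt_scholze_prisms_and_prismatic_cohomology} identify $\pi_2(\TC(\Z_p^{\cycl};\Z_p))$ with the Tate twist $\Z_p(1)$ compatibly with the canonical map from $T_p(\mu_{p^\infty})$), the image of $\mathrm{ctr}(x)$ in $\mathcal{N}^{\geq1}/\mathcal{N}^{\geq2}$ is the class of $[x]_\theta-1$. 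It then remains to check that the series $\log_q([x]_\theta)$ has these same two properties. Property (i) amounts to the functional equation $\varphi(\log_q([x]_\theta))=\tilxi\cdot\log_q([x]_\theta)$, proved by direct manipulation from $\varphi(q)=q^p$, $\varphi([x]_\theta)=[x]_\theta^{\,p}$ and $\tilxi=(q^p-1)/(q-1)$ (which in particular places $\log_q([x]_\theta)$ in $\mathcal{N}^{\geq1}\widehat{\prism}_{R_0}$, so that the series converges). For property (ii): each factor $[x]_\theta-q^j$ lies in $\ker(\theta)=\mathcal{N}^{\geq1}$, so for $n\geq2$ the $n$-th Pochhammer numerator lies in $\mathcal{N}^{\geq n}\subseteq\mathcal{N}^{\geq2}$, and the $p$-adic estimates needed to control the denominators $[n]_q$ show that the whole tail $\sum_{n\geq2}(\cdots)$ lies in $\mathcal{N}^{\geq2}$; hence $\log_q([x]_\theta)\equiv[x]_\theta-1$ modulo $\mathcal{N}^{\geq2}$. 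By the injectivity of the previous paragraph this forces $\mathrm{ctr}(x)=\log_q([x]_\theta)$.

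\emph{Main obstacle.}
I expect the crux to be step (ii): making the Bott-element computation of \cite{bhatt_morrow_scholze_topological_hochschild_homology} sharp enough to pin down not merely that $\mathrm{ctr}(x)$ is \emph{some} $q$-deformation of $\log[x]_\theta$ but the one with coefficients $(-1)^{n-1}q^{-n(n-1)/2}/[n]_q$; this requires controlling precisely the interaction of the cyclotomic Frobenius, the Nygaard filtration and the Teichm\"uller lifts, and keeping careful track of the dependence on $\varepsilon$ of the identifications $\pi_{2i}(\TC^-)\cong\mathcal{N}^{\geq i}\widehat{\prism}_R$ and $\pi_{2i}(\TP)\cong\widehat{\prism}_R$. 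A secondary, more routine difficulty is the reduction in the first paragraph: handling non-primitive $x$ and non-reduced or disconnected $R$, and verifying the quasi-regular semiperfectoid and perfectoid properties of $R_0$ in the respective cases.
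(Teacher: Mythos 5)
Your overall architecture is the same as the paper's: reduce to a universal case, use injectivity of $\widehat{\prism}_R^{\varphi=\tilxi}\to\mathcal{N}^{\geq 1}\widehat{\prism}_R/\mathcal{N}^{\geq 2}\widehat{\prism}_R$, identify the image of $\mathrm{ctr}(x)$ in $\pi_2(\THH(R;\Z_p))\cong\mathcal{N}^{\geq 1}/\mathcal{N}^{\geq 2}$ with the class of $[x]_\theta-1$, and match this against $\log_q([x]_\theta)\equiv[x]_\theta-1$ mod $\mathcal{N}^{\geq 2}$. However, your reduction step fails. An element $x=(1,x_1,x_2,\ldots)\in T_p(R^\times)$ is a compatible system of $p^n$-torsion units, not of \emph{primitive} $p^n$-th roots of unity, so there is in general no ring map $\Z_p[\mu_{p^\infty}]^\wedge\to R$ sending $\zeta_{p^n}\mapsto x_n$: for $x=\varepsilon^p$ one has $x_1=1$, and a map $\Z[\zeta_p]\to R$ with $\zeta_p\mapsto 1$ forces $p=0$ in $R$. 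Restricting to $x$ generating a free rank one $\Z_p$-submodule does not repair this. The correct universal case is $\Z_p^{\cycl}\langle t^{1/p^\infty}\rangle/(t-1)$, which is quasi-regular semiperfectoid but \emph{not} perfectoid; and even the ring $\Z_p^{\cycl}\widehat{\otimes}_{\Z_p}\Z_p^{\cycl}$ you propose is not perfectoid, since its $p$-completed cotangent complex is free of rank $2$ in degree $-1$, whereas perfectoid rings have rank $1$. What the universal case actually provides is $p$-torsion-freeness of $R$, hence that $(p,\tilxi)$ is a regular sequence on $\widehat{\prism}_R$ (transversality), and it is under this hypothesis --- not perfectoidness --- that the injectivity you invoke can be established.

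Beyond the reduction, the two pillars of your argument are asserted rather than proved, and both require genuine work. (1) The injectivity of $\widehat{\prism}_R^{\varphi=\tilxi}\to\mathcal{N}^{\geq 1}/\mathcal{N}^{\geq 2}$ is not a formal consequence of the even-degree description of $\TC^-$ and $\THH$; the paper devotes a whole section to it, proving for transversal prisms that $A\to\prod_{i\geq 0}A/\varphi^i(I)$ is injective and propagating the eigenvector relation $\varphi(y)=\tilxi y$ through that product. You need some version of this argument. (2) The statement that $\mathrm{ctr}(x)$ reduces to the class of $[x]_\theta-1$, with the correct sign and with respect to the \emph{specific} trivializations $u,v,\sigma$ entering the isomorphism $\pi_2(\TC(R;\Z_p))\cong\widehat{\prism}_R^{\varphi=\tilxi}$, is the content of the degree-$2$ Dennis trace computation plus a careful bookkeeping of generators of $H_1(\mathbb{T},\Z)$ and $H^2(B\mathbb{T},\Z)$; your normalization via $\mathrm{ctr}(\varepsilon)=q-1$ is the right anchor (it matches the paper's choice $v=\sigma^{-1}$ with $\sigma=\mathrm{ctr}(\varepsilon)/\mu$, which itself requires showing $\mathrm{can}(\mathrm{ctr}(\varepsilon))$ is divisible by $\mu$), but it does not by itself yield the leading term for general $x$; naturality in $x$ via the (corrected) universal ring and an explicit cotangent-complex computation are needed. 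On the positive side, your insistence on verifying $\varphi(\log_q([x]_\theta))=\tilxi\log_q([x]_\theta)$ is well placed: this is genuinely needed for the injectivity step to apply to the difference of the two elements, and it is checked exactly as you indicate, by reduction to the flat universal case and the identity $\log_q(x)=\frac{q-1}{\log q}\log(x)$ after inverting $p$.
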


Here we embed
$$
T_p(R^\times)\subseteq R^\flat,\ (r_0\in R^\times[p],r_1,\ldots)\mapsto (1,r_0,r_1,\ldots).
$$
By
$$
[n]_q:=\frac{q^n-1}{q-1}
$$
we denote the the $q$-analog of $n\in \Z$.

\begin{remark}
  \label{sec:main-results-remark-geisser-hesselholt}
A similar result can be found in \cite[Lemma 4.2.3.]{geisser_hesselholt_topological_cyclic_homology_of_schemes}, but only
before $p$-completion, on $\pi_1$ and in terms of $\mathrm{TR}_\ast$, which is not enough to deduce \Cref{sec:introduction-concrete-cyclotomic-trace-introduction} from their result.
\end{remark}

As a consequence of \cite{clausen_mathew_morrow_k_theory_and_topological_cyclic_homology_of_henselian_pairs} and \Cref{sec:introduction-concrete-cyclotomic-trace-introduction}, one gets the following result.

\begin{corollary}
\label{sec:introduction-corollary-bijectivity-qlog}
Let $R$ be a quasi-regular semiperfectoid $\Z_p^{\rm cycl}$-algebra. The map
\[ \log_q([-]_{\theta}) : T_p(R^{\times}) \to \widehat{\prism}_R^{\varphi=\tilxi} \]
is a bijection.
\end{corollary}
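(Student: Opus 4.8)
The plan is to feed \Cref{sec:introduction-concrete-cyclotomic-trace-introduction} into the comparison theorem of Clausen--Mathew--Morrow. Write $\alpha\colon T_p(R^{\times})\to\pi_2(K(R;\Z_p))$ for the canonical map from the introduction. By \Cref{sec:introduction-concrete-cyclotomic-trace-introduction}, the $q$-logarithm $\log_q([-]_{\theta})$ is the composite
$$
T_p(R^{\times})\xrightarrow{\alpha}\pi_2(K(R;\Z_p))\xrightarrow{\mathrm{ctr}}\pi_2(\TC(R;\Z_p))\xrightarrow{\sim}\widehat{\prism}_R^{\varphi=\tilxi}.
$$
Since $R$ is $p$-complete it is henselian along $(p)$, and since $R$ admits a surjection $R'\to R$ from an (integral) perfectoid ring $R'$, the quotient $R/p$ is a quotient of the semiperfect ring $R'/p$, hence semiperfect; therefore \cite[Corollary 6.9]{clausen_mathew_morrow_k_theory_and_topological_cyclic_homology_of_henselian_pairs} applies and the cyclotomic trace induces an equivalence $K(R;\Z_p)\simeq\tau_{\geq 0}\TC(R;\Z_p)$. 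In particular the middle map above is an isomorphism, and \Cref{sec:introduction-corollary-bijectivity-qlog} becomes equivalent to the statement that $\alpha$ is bijective.

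For the bijectivity of $\alpha$, I would use the Milnor (universal coefficient) exact sequence for the $p$-completed spectrum $K(R)$,
$$
0\to (K_2(R))^{\wedge}_p\to\pi_2(K(R;\Z_p))\to T_p(K_1(R))\to 0,
$$
where $(-)^{\wedge}_p$ denotes derived $p$-completion. The composite of $\alpha$ with the projection onto $T_p(K_1(R))$ is, by the construction of $\alpha$, the map induced by the inclusion $R^{\times}\subseteq K_1(R)$; since the latter is injective, $\alpha$ is injective. It is therefore an isomorphism as soon as $(K_2(R))^{\wedge}_p$ vanishes and $SK_1(R)$ has trivial $p$-adic Tate module (the latter being automatic when $R$ is local), for then $\pi_2(K(R;\Z_p))\xrightarrow{\sim}T_p(K_1(R))=T_p(R^{\times})$ with inverse $\alpha$.

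The crux, and the step I expect to be the main obstacle, is the vanishing $(K_2(R))^{\wedge}_p=0$ --- equivalently, the surjectivity of $\alpha$. I would obtain it from the structure of the low-degree $p$-adic $K$-theory of quasi-regular semiperfectoid rings: by the Bhatt--Morrow--Scholze motivic filtration together with the equivalence $K(R;\Z_p)\simeq\tau_{\geq 0}\TC(R;\Z_p)$ one has $\mathrm{gr}^i K(R;\Z_p)\simeq\Z_p(i)(R)[2i]$ for $i\geq 1$, with the syntomic complexes $\Z_p(i)(R)=\mathrm{fib}\big(\mathcal{N}^{\geq i}\widehat{\prism}_R\xrightarrow{1-\varphi/\tilxi^{i}}\widehat{\prism}_R\big)$ concentrated in cohomological degrees $[0,1]$; hence the associated spectral sequence has a single contribution in total degree $2$ and $\pi_2(K(R;\Z_p))\cong H^0(\Z_p(1)(R))=\widehat{\prism}_R^{\varphi=\tilxi}$. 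Comparing $\Z_p(1)(R)$ with $R\varprojlim_n R\Gamma_{\mathrm{fppf}}(\Spec R,\mu_{p^n})$ identifies $H^0(\Z_p(1)(R))$ with $\varprojlim_n\mu_{p^n}(R)=T_p(R^{\times})$ by Kummer theory, and a check that $\alpha$ realises the corresponding first Chern class then shows $\alpha$ is an isomorphism; alternatively, the identification $\pi_2(K(R;\Z_p))\cong T_p(R^{\times})$ for such rings may simply be invoked here as a known input. Granting that $\alpha$ is bijective, the factorisation of the first paragraph shows $\log_q([-]_{\theta})$ is a bijection, which is the assertion of \Cref{sec:introduction-corollary-bijectivity-qlog}.
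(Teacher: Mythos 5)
Your first paragraph follows the paper exactly: factor $\log_q([-]_\theta)$ as $\beta_\varepsilon\circ\mathrm{ctr}\circ\alpha$ via \Cref{sec:introduction-concrete-cyclotomic-trace-introduction}, and use \cite[Corollary 6.9]{clausen_mathew_morrow_k_theory_and_topological_cyclic_homology_of_henselian_pairs} (the hypotheses of which you verify correctly) to dispose of the middle map. The whole content of the corollary is thus the bijectivity of $\alpha\colon T_p(R^\times)\to\pi_2(K(R;\Z_p))$, and this is where your argument has a gap. Your injectivity step is fine (the determinant splits $R^\times\hookrightarrow K_1(R)$, and $\alpha$ is compatible with the Milnor sequence), but the surjectivity --- which you rightly identify as the crux --- is not actually proved. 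The route you sketch reduces it to the identification $H^0(\Z_p(1)(R))\cong T_p(R^\times)$ \emph{together with} the compatibility of this identification with $\alpha$ (your ``check that $\alpha$ realises the first Chern class''). That compatibility is essentially equivalent to the statement you are trying to prove, and it is the hard point: knowing only an abstract isomorphism $\pi_2(K(R;\Z_p))\cong T_p(R^\times)$ would not suffice, since an injection between abstractly isomorphic profinite groups need not be surjective. So the fallback of ``invoking the identification as a known input'' either begs the question or imports a theorem (the comparison of $\Z_p(1)$ with the Tate module of $\mathbb{G}_m$, compatibly with Chern classes) that is at least as deep as the corollary and is nowhere established in your argument.

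The paper closes this gap differently and more economically: both $R\mapsto T_p(R^\times)$ and $R\mapsto\widehat{\prism}_R^{\varphi=\tilxi}$ (equivalently $\pi_2(\TC(-;\Z_p))$) are quasi-syntomic sheaves, so by descent one may assume, as in \cite[Proposition 7.17]{bhatt_morrow_scholze_topological_hochschild_homology}, that $R$ is $w$-local with $R^\times$ divisible; for such $R$ the map $T_p(R^\times)\to\pi_2(K(R;\Z_p))$ is known to be bijective, and the corollary follows. If you want to salvage your direct approach, you would need to either carry out the Chern class compatibility in degree $2$ or run the same descent reduction to make the surjectivity of $\alpha$ accessible.
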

This corollary is used in \cite{anschuetz_le_bras_prismatic_dieudonne_theory}, which studies a prismatic version of Dieudonn\'e theory for $p$-divisible groups, and was our original motivation for proving \Cref{sec:introduction-concrete-cyclotomic-trace-introduction}.
\\

Here is a short description of the proof
of \Cref{sec:introduction-concrete-cyclotomic-trace-introduction}. By testing the universal case
$R=\Z^{\mathrm{cycl}}\langle x^{1/p^\infty}\rangle/(x-1)$ one is
reduced to the case where $(p,\tilxi)$ form a regular sequence on
$\widehat{\prism}_R$, i.e., the prism $(\widehat{\prism}_R,\tilxi)$ is
\textit{transversal} (cf.\ \Cref{sec:some-results-about-definition-transversal-prism}). In this situation, we prove that the reduction map
$$
\widehat{\prism}_R^{\varphi=\tilxi}\hookrightarrow \mathcal{N}^{\geq
  1}\widehat{\prism}_R/\mathcal{N}^{\geq 2}\widehat{\prism}_R
$$
is \textit{injective} (cf.\ \Cref{sec:transversal-prisms-corollary-injectivity-for-reduction-modulo-second-nygaard-filtration}). Thus it suffices to identify the composition
$$
T_p(R^\times)\xrightarrow{\mathrm{ctr}}
{\widehat{\prism}_R}^{\varphi=\tilxi}\to \mathcal{N}^{\geq
  1}\widehat{\prism}_R/\mathcal{N}^{\geq 2}\widehat{\prism}_R.
$$
Using the results \cite{bhatt_morrow_scholze_topological_hochschild_homology} the quotient
$\mathcal{N}^{\geq 1}\widehat{\prism}_R/\mathcal{N}^{\geq
  2}\widehat{\prism}_R$ identifies with the $p$-completed Hochschild
homology $\pi_2(\HH(R;\Z_p))$ (cf.\ \Cref{sec:prism-cohom-topol}) and
therefore the above composition identifies with the $p$-completed
Dennis trace. A straightforward computation then identifies the
$p$-completed Dennis trace (cf.\ \Cref{sec:p-completed-dennis}), which allows us to conclude. We
expect the results in \Cref{sec:p-completed-dennis} to be known, in some form, to
the experts, but we did not find the results anywhere in the literature.
\\

Let us end this introduction by a remark and a question. One could try to reverse the perspective from \Cref{sec:introduction-corollary-bijectivity-qlog} and try to recover a (very) special case of the result of Clausen-Mathew-Morrow (cf.\ \cite{clausen_mathew_morrow_k_theory_and_topological_cyclic_homology_of_henselian_pairs}) regarding the cyclotomic trace map using the concrete description furnished by \Cref{sec:introduction-concrete-cyclotomic-trace-introduction}. If $R$ is of characteristic $p$, we \blue{have} $q=1$ and then
the $q$-logarithm becomes the honest logarithm
$$
\log([-]_{\theta})\colon T_p(R^\times)\to A_\crys(R)^{\varphi=p}.
$$
In
\cite{scholze_weinstein_moduli_of_p_divisible_groups}, it is proven (using the exponential) that
the map $\log([-])$ is an isomorphism, when $R$ is the quotient
of a perfect ring modulo a regular sequence. If $R$ is the quotient of a perfectoid ring by a finite regular sequence and is $p$-torsion free, it is not difficult to deduce from Scholze-Weinstein's result that the map $$
  \log_q([-]_{\theta})\colon T_p(R^\times)\to \widehat{\prism}_R^{\varphi=\tilxi}
  $$
  is a bijection when $p$ is odd. Is there a way to prove it directly in general, for any $p$ and any quasi-regular semiperfectoid ring ?

\subsection{Plan of the paper}
In \Cref{sec:p-completed-dennis} we concretely identify the $p$-completed Dennis trace on the Tate module of units (cf.\ \Cref{proposition_second_description_of_dennis_trace}) in the form we need it. In \Cref{sec:some-results-prisms} we prove the crucial injectivity statement, namely \Cref{sec:transversal-prisms-corollary-injectivity-for-reduction-modulo-second-nygaard-filtration}, for transversal prisms. In \Cref{section-q-logarithm} we make sense of the $q$-logarithm. Finally, in \Cref{sec:p-compl-cycl} we prove our main result \Cref{sec:introduction-concrete-cyclotomic-trace-introduction} and its consequence, \Cref{sec:introduction-corollary-bijectivity-qlog}.

\subsection{Acknowledgements}
\label{sec:acknowledgments}
  The authors thank Peter Scholze for answering several questions and his suggestion to think about \Cref{lemma_reduction_injective_on_eigenspace}. Moreover, we thank Bhargav Bhatt, Akhil Mathew, Andreas Mihatsch, Emmanuele Dotto, Matthew Morrow for answers and interesting related discussions. Very special thanks go to Irakli Patchkoria, who helped the authors with all necessary topology and provided detailed comments on a first draft. The authors thank an anonymous referee for her/his excellent report, in particular for suggesting the shorter proof of \Cref{proposition_second_description_of_dennis_trace}, which we presented here.
  
  The authors would like to thank the University of Bonn and the Institut de Math\'ematiques de Jussieu for their hospitality while this work was done.

\section{The $p$-completed Dennis trace in degree $2$}
\label{sec:p-completed-dennis}

Fix some prime $p$ and let $A=R/I$ be the quotient of a $(p,I)$-complete ring $R$. The aim of this section is to concretely describe in degree 2 the composition
$$
T_p(A^\times)\to
\pi_2(K(A;\Z_p)) \xrightarrow{\mathrm{Dtr}}\pi_2(\HH(A;\Z_p)) \to \pi_2(\HH(A/R;\Z_p)). 
$$
Here
$$
K(A;\Z_p)
$$
denotes the $p$-completed (connective) $K$-theory spectrum of $A$,
  $$
  \HH(A;\Z_p), \textrm{ resp. } \HH(A/R;\Z_p)
  $$
  the $p$-completed (derived) Hochschild homology of $A$ as a $\mathbb{Z}$-algebra, resp.\ as an $R$-algebra, and $\mathrm{Dtr}$ is the Dennis trace map. Before stating precisely our result, let us start by some reminders on the objects and the maps involved in the previous composition.
  \\ 

Let us first recall the construction of the first map $T_p(A^\times)\to \pi_2(K(A;\Z_p))$. Let
$$
\mathrm{GL}(A)=\varinjlim\limits_r \mathrm{GL}_r(A)
$$ 
be the infinite general linear group over $A$. There is a canonical
inclusion
$$
A^\times=\mathrm{GL}_1(A)\to \mathrm{GL}(A)
$$
of groups which on classifying spaces induces a map
$$
B(A^\times)\to B(\GL(A)).
$$
Composing with the morphism to Quillen's $+$-construction yields a
canonical morphism
$$
B(A^\times)\to B\GL(A)\to K(A):=B\GL(A)^+\times K_0(A)
$$
into the $K$-theory space $K(A)$ of $A$. After $p$-completion of spaces\footnote{We use space as a synonym for
  Kan complex.} we obtain a canonical morphism
$$
\iota\colon B(A^\times)^\wedge_p\to K(A;\Z_p):=K(A)^\wedge_p.
$$
We recall (cf.\ \cite[Theorem 10.3.2]{may_more_concise_algebraic_topology}) that the space $B(A^\times)^\wedge_p$ has two
non-trivial homotopy groups which are given by
$$
\pi_1(B(A^\times)^\wedge_p)\cong
H^0(R\varprojlim\limits_n(A^\times\otimes^{\mathbb{L}}_\Z \Z/p^n))
$$
and
$$
\pi_2(B(A^\times)^\wedge_p)\cong
H^{-1}(R\varprojlim\limits_n(A^\times\otimes^{\mathbb{L}}_\Z \Z/p^n))\cong
T_p(A^\times).
$$
In degree $2$ we thus get a morphism
$$
T_p(A^\times)=\pi_2(B(A^\times)^\wedge_p)\to \pi_2(K(A;\Z_p)),
$$
which is the first constituent of the map
$$
T_p(A^\times)\to
\pi_2(K(A;\Z_p))\xrightarrow{\mathrm{Dtr}}\pi_2(\HH(A;\Z_p)) \to \pi_2(\HH(A/R;\Z_p))
$$
we want to describe. 
\\

Now we turn to the construction of Hochschild homology and the Dennis
trace 
$$ 
\mathrm{Dtr}\colon K(A)\to \HH(A).
$$ 

Let $R$ be a (commutative) ring and $A$ a (commutative) $R$-algebra. Let 
$$
\mathbb{T}:=S^1\cong B\Z
$$
be the circle group. Then the Hochschild homology spectrum 
  $$
  \HH(A/R)
  $$
 (simply denoted $\HH(A)$ when $R=\Z$) is the initial $\mathbb{T}$-equivariant\footnote{For an $\infty$-category $\mathcal{C}$ the category of $\mathbb{T}$-equivariant objects of $\mathcal{C}$ is by definition the $\infty$-category of functors $B\mathbb{T}\to \mathcal{C}$.} $E_\infty-R$-algebra with a non-equivariant map $A\to \HH(A/R)$ of $E_\infty-R$-algebras, cf.\ \cite[Remark 2.4]{bhatt_morrow_scholze_topological_hochschild_homology}. For a comparison with classical definitions, we refer to \cite{hoyois_the_homotopy_fixed_points_of_the_circle_action_on_hochschild_homology}. 

  The functor $A\mapsto \HH(A/R)$ extends to all simplicial $R$-algebras and as such is left Kan extended (as it commutes with sifted colimits) from the category of finitely generated polynomial $R$-algebras. By left Kan extending the (decreasing) Postnikov filtration $\tau_{\geq \bullet}\HH(A/R)$ on $\HH(A/R)$ for $A$ a finitely generated polynomial $R$-algebra one obtains the $\mathbb{T}$-equivariant HKR-filtration
  $$
  \mathrm{Fil}^n_{\mathrm{HKR}}\HH(A/R)
  $$
  on $\HH(A/R)$ for $A$ a general $R$-algebra.
  The $\infty$-category of $\mathbb{T}$-equivariant objects in the derived $\infty$-category $\mathcal{D}(R)$ of $R$ is equivalent to the $\infty$-category of $R[\mathbb{T}]$-modules, where
  $$
  R[\mathbb{T}]=R\otimes \Sigma_+^\infty \mathbb{T}
  $$
  is the group algebra of $\mathbb{T}$ over $R$ (cf.\ \cite[page 5]{hoyois_the_homotopy_fixed_points_of_the_circle_action_on_hochschild_homology}).
  Let
  $$
  \gamma\in H_1(\mathbb{T},R)\cong \mathrm{Hom}_{\mathcal{D}(R)}(R[1],R[\mathbb{T}])
  $$
  be a generator\footnote{We will mostly assume that $\gamma$ is obtained by base change from some generator of $H_1(\mathbb{T},\Z)$.}. The multiplication by $\gamma$ induces a differential
  $$
  d\colon \HH_{i}(A/R)\to \HH_{i+1}(A/R)
  $$
  which makes $\HH_{\ast}(A/R)$ into a graded commutative dg-algebra over $R$ all of whose elements of odd degree square to zero (cf.\ \cite[Lemma 2.3]{nikolaus_krause_lectures_on_topological_hochschild_homology}).
  By the universal property of the de Rham complex $\Omega_{A/R}^\ast$, the canonical morphism $A\to \HH_0(A/R)$ extends therefore to a morphism
  $$
  \alpha_\gamma\colon \Omega_{A/R}^\ast\to \HH_{\ast}(A/R).
  $$
  The Hochschild-Kostant-Rosenberg theorem affirms that $\alpha_\gamma$ is an isomorphism if $R\to A$ is smooth. By left Kan extension, one obtains for arbitrary $R\to A$ the natural description
  $$
  \alpha_{\gamma}\colon \wedge^i L_{A/R}[i]\cong \mathrm{gr}^i_{\mathrm{HKR}} \HH(A/R)
  $$
  of the graded pieces of the HKR-filtration via exterior powers of the cotangent complex of $A$ over $R$ (cf. \cite[Section 2.2]{bhatt_morrow_scholze_topological_hochschild_homology}).
  
  In particular, we get after $p$-completion the following consequence in degree $2$, which will be used to formulate our description of the Dennis trace below.
  
  \begin{lemma}
  \label{lemma_second_hochschild_homology_for_a_surjection}
  Let $R$ be a ring and $I\subseteq R$ an ideal. Let $A=R/I$. Fix a generator $\gamma$ of $H_1(\mathbb{T},\Z)$. There is a
  natural isomorphism 
$$
\alpha_{\gamma} : (I/I^2)^\wedge_p\cong \pi_2(\HH(A/R;\Z_p)).
$$

\end{lemma}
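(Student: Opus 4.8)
The plan is to read off $\pi_2$ from the (\(p\)-completed) HKR filtration, using that for a quotient the cotangent complex is $1$-connective.

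First I would record the input from the cotangent complex. Since $A=R/I$ is a quotient of $R$, one has $\Omega_{A/R}=0$, so $L_{A/R}$ is $1$-connective, and $\pi_1(L_{A/R})\cong I/I^2$ canonically (transitivity triangle for $R\to A\to A$ together with $L_{A/A}=0$, or a simplicial resolution). Feeding this into the identification $\alpha_\gamma\colon \wedge^i L_{A/R}[i]\xrightarrow{\ \sim\}\mathrm{gr}^i_{\mathrm{HKR}}\HH(A/R)$ recalled above from \cite{bhatt_morrow_scholze_topological_hochschild_homology}, we get: $\mathrm{gr}^0_{\mathrm{HKR}}\HH(A/R)=A$ in degree $0$; $\mathrm{gr}^1_{\mathrm{HKR}}\HH(A/R)\simeq L_{A/R}[1]$, which is $2$-connective with $\pi_2\cong I/I^2$; and, since a derived exterior power $\wedge^i$ of a $1$-connective module is $i$-connective, $\mathrm{gr}^i_{\mathrm{HKR}}\HH(A/R)\simeq \wedge^i L_{A/R}[i]$ is $2i$-connective for $i\geq 1$. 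In particular $\mathrm{Fil}^2_{\mathrm{HKR}}\HH(A/R)$ is $3$-connective.

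Next I would run the two evident fibre sequences $\mathrm{Fil}^2_{\mathrm{HKR}}\to\mathrm{Fil}^1_{\mathrm{HKR}}\to\mathrm{gr}^1_{\mathrm{HKR}}$ and $\mathrm{Fil}^1_{\mathrm{HKR}}\to\HH(A/R)\to\mathrm{gr}^0_{\mathrm{HKR}}=A$ on homotopy groups. Since $\pi_1(\mathrm{Fil}^2_{\mathrm{HKR}})=\pi_2(\mathrm{Fil}^2_{\mathrm{HKR}})=0$ and $\pi_2(A)=\pi_3(A)=0$, the two long exact sequences collapse to natural isomorphisms $\pi_2\HH(A/R)\cong \pi_2\,\mathrm{gr}^1_{\mathrm{HKR}}\cong \pi_1 L_{A/R}\cong I/I^2$; this composite identification is exactly the degree-$2$ part of $\alpha_\gamma\colon \Omega^\bullet_{A/R}\to\HH_\bullet(A/R)$, and its dependence on the chosen generator $\gamma$ of $H_1(\mathbb{T},\Z)$ is the dependence already present in $\alpha_\gamma$.

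Finally I would $p$-complete. Derived $p$-completion of spectra, $X\mapsto X^{\wedge}_p=\varprojlim_n \mathrm{cofib}(p^n\colon X\to X)$, is exact (both $\mathrm{cofib}(p^n,-)$ and $\varprojlim$ preserve fibre sequences), so it carries the HKR filtration to a filtration on $\HH(A/R;\Z_p)$ with graded pieces $(\mathrm{gr}^i_{\mathrm{HKR}}\HH(A/R))^{\wedge}_p$; and it preserves connectivity of connective spectra, because for $M$ $n$-connective the towers $\{\pi_j(M)/p^\bullet\}$ have surjective transition maps, hence are Mittag--Leffler, killing the relevant $\varprojlim^1$-terms. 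Thus all the connectivity estimates are unchanged after $p$-completion, and rerunning the same two fibre-sequence chases gives $\pi_2\HH(A/R;\Z_p)\cong \pi_2\big((\mathrm{gr}^1_{\mathrm{HKR}}\HH(A/R))^{\wedge}_p\big)\cong \pi_1\big((L_{A/R})^{\wedge}_p\big)$; since $\pi_0 L_{A/R}=0$, the universal-coefficient (Milnor) sequence for $\pi_1$ of a $p$-completion simplifies to $\pi_1\big((L_{A/R})^{\wedge}_p\big)\cong (\pi_1 L_{A/R})^{\wedge}_p=(I/I^2)^{\wedge}_p$, giving the desired $\alpha_\gamma$. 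Naturality in $(R,I)$ is automatic, every step being functorial. The only delicate point is the bookkeeping around $p$-completion --- that it neither lowers connectivity in the range $\deg\leq 2$ (the $\varprojlim^1$/Mittag--Leffler argument) nor lets the higher graded pieces $\mathrm{gr}^i$, $i\geq 2$, intervene --- together with fixing the precise meaning of $(I/I^2)^{\wedge}_p$ (classical versus derived $p$-completion; one checks they agree here, or reads $(I/I^2)^{\wedge}_p$ as $\pi_0$ of the derived completion).
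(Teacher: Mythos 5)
Your argument is correct and follows exactly the route of the paper, whose proof is a one-line appeal to the HKR filtration together with the identification $(I/I^2)^\wedge_p\cong H^{-1}((L_{A/R})^\wedge_p)$ — you have simply fleshed out the connectivity bookkeeping and the compatibility with derived $p$-completion. One cosmetic point: the isomorphism $\pi_1(L_{A/R})\cong I/I^2$ should be justified via the (naive) cotangent complex of a surjection (\cite[Tag 08RA]{stacks_project}, as the paper cites) rather than by the transitivity triangle for $R\to A\to A$, which is vacuous.
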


  Here (and in the rest of the paper) we denote by $M^\wedge_p$ the \textit{derived} $p$-adic completion of an abelian group $M$, i.e.,
$$
M^\wedge_p:=H^0(R\varprojlim\limits_{n} M\otimes_{\Z}^{\mathbb{L}} \Z/p^n).
$$

\begin{proof}
  The first assertion follows from the HKR-filtration on $\HH(A/R;\Z_p)$
  described above and the fact there is a canonical isomorphism
$$
(I/I^2)^\wedge_p\cong H^{-1}((L_{A/R})^\wedge_p)
$$
which is implied by \cite[Tag 08RA]{stacks_project}.
\end{proof}

The Dennis trace can be defined abstractly, cf.\ \cite[Section 10.2]{blumberg_gepner_tabuada_a_universal_characterization_of_higher_algebraic_k_theory}, as the composition of the unique natural transformation
  $$
  K\to \THH
  $$ 
of additive invariants of small stable $\infty$-categories from $K$-theory to topological Hochschild homology, which induces the identity on 
$$
\Z\cong \pi_0(K(\mathbb{S}))\to \pi_0(\THH(\mathbb{S}))\cong \Z,
$$
 and the natural transformation\footnote{On rings.} $\THH\to \HH$.

The only thing we will need to use as an input regarding the Dennis trace is the following explicit description in degree $1$. Recall from above that if $A$ is a ring, each choice of a generator $\gamma$ of $H_1(\mathbb{T},\Z)$ gives rise to an isomorphism
$$
\alpha_{\gamma} : \pi_1(\HH(A/\Z))\cong \Omega^1_{A/\Z}
$$
as $H^0(L_{A/\Z})\cong \Omega^1_{A/\Z}$ for any $A$.

 \begin{lemma}
    \label{sec:p-completed-dennis-1-dennis-trace-in-degree-1}
    Let $A$ be a commutative ring. There exists a unique bijection
    $$
    \delta_1\colon \{ \textrm{generators of } H_1(\mathbb{T},\Z)\}\cong \{\pm 1\}
    $$
    such that
    $$
    A^\times\cong \pi_1(BA^\times)\overset{\mathrm{Dtr}}{\to} \pi_1(\HH(A))\overset{\alpha_\gamma}{\cong} \Omega^1_{A/\Z},\
a\mapsto \delta_1(\gamma)\mathrm{dlog}(a)
$$
for any generator $\gamma\in H_1(\mathbb{T},\Z)$.
  \end{lemma}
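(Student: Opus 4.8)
The plan is to reduce the statement to a single universal identity and then pin down the Dennis trace there by a homotopy‑theoretic computation.

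First I would set up the reduction. Fix an integral generator $\gamma\in H_1(\mathbb{T},\Z)$. For every commutative ring $A$ the composite
$$
\tau_\gamma^A\colon A^\times \cong \pi_1(BA^\times)\xrightarrow{\ \mathrm{Dtr}\ }\pi_1(\HH(A))\xrightarrow[\ \sim\ ]{\alpha_\gamma}\Omega^1_{A/\Z}
$$
is a homomorphism of abelian groups — the outer two maps are isomorphisms of abelian groups, and the middle one is $\pi_1$ of a map of pointed spaces $BA^\times\to\Omega^\infty\HH(A)$ — and it is natural in $A$, since $\mathrm{Dtr}$ and (for a fixed \emph{integral} $\gamma$) the isomorphism $\alpha_\gamma$ are. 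Hence $A\mapsto\tau_\gamma^A$ is a natural transformation of abelian‑group‑valued functors $(-)^\times\to\Omega^1_{-/\Z}$, and by the Yoneda lemma (applied to $\Z[t,t^{-1}]$ and the tautological unit $t$) it is determined by $\omega:=\tau_\gamma^{\Z[t^{\pm1}]}(t)\in\Omega^1_{\Z[t^{\pm1}]/\Z}=\Z[t^{\pm1}]\,dt$. Writing $\omega=f(t)\,dt$, naturality for the ring endomorphism $t\mapsto t^2$ of $\Z[t,t^{-1}]$ together with additivity of $\tau_\gamma$ forces $2f(t^2)\,t=2f(t)$, so $f(t)=c(\gamma)\,t^{-1}$ for a unique $c(\gamma)\in\Z$. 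Thus $\tau_\gamma^A(u)=c(\gamma)\,\mathrm{dlog}(u)$ for all $A$ and all $u\in A^\times$, and $c(-\gamma)=-c(\gamma)$ because $\alpha_{-\gamma}=-\alpha_\gamma$ (the differential on $\HH_\ast$ is multiplication by $\gamma$). Setting $\delta_1(\gamma):=c(\gamma)$, uniqueness of $\delta_1$ is immediate (evaluate at $A=\Z[t,t^{-1}]$, $a=t$, where $\mathrm{dlog}(t)$ is non‑torsion), so the lemma reduces to showing $c(\gamma)\in\{\pm1\}$.

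To prove $c(\gamma)=\pm1$ I would compute $\mathrm{Dtr}(t)\in\HH_1(\Z[t^{\pm1}]/\Z)$ by passing to the spherical group ring: $H\Z\otimes_{\mathbb{S}}\mathbb{S}[\mathbb{Z}]$ is the Eilenberg–MacLane spectrum of $\Z[t,t^{-1}]$, the map $\mathbb{T}=B\mathbb{Z}\to B\GL_1(\mathbb{S}[\mathbb{Z}])\to\Omega^\infty K(\mathbb{S}[\mathbb{Z}])\to\Omega^\infty K(\Z[t^{\pm1}])$ carries the generator of $\pi_1(\mathbb{T})$ to $t\in\Z[t^{\pm1}]^\times=K_1(\Z[t^{\pm1}])$, and $\THH(\mathbb{S}[\mathbb{Z}])\simeq\Sigma^\infty_+B^{\mathrm{cy}}\mathbb{Z}\simeq\Sigma^\infty_+\mathcal{L}(B\mathbb{Z})$. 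By the Bökstedt–Hsiang–Madsen description of the trace on spherical group rings, the composite $\Sigma^\infty_+B\mathbb{Z}\to K(\mathbb{S}[\mathbb{Z}])\xrightarrow{\mathrm{tr}}\Sigma^\infty_+\mathcal{L}(B\mathbb{Z})$ is $\Sigma^\infty_+$ of the constant‑loop inclusion $s\colon B\mathbb{Z}\hookrightarrow\mathcal{L}(B\mathbb{Z})$. Since $s$ is split by evaluation at the basepoint, $s_\ast$ is split injective on homology; combining this with the Hurewicz isomorphism $\pi_1(\mathbb{T})=\Z\xrightarrow{\sim}H_1(\mathbb{T};\Z)$ and with the linearization map $\THH(\mathbb{S}[\mathbb{Z}])\to\THH(\Z[t^{\pm1}])\to\HH(\Z[t^{\pm1}]/\Z)$, i.e.\ $\Sigma^\infty_+\mathcal{L}(B\mathbb{Z})\to C_\ast(\mathcal{L}(B\mathbb{Z});\Z)$, one finds that $\mathrm{Dtr}(t)$ is a primitive vector of $\HH_1(\Z[t^{\pm1}]/\Z)\cong\Omega^1_{\Z[t^{\pm1}]/\Z}=\bigoplus_{n\in\Z}\Z\,t^n\,dt$. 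As $\alpha_\gamma(\mathrm{Dtr}(t))=c(\gamma)\,t^{-1}\,dt$ by the previous step and $t^{-1}\,dt$ belongs to a $\Z$‑basis, primitivity of $\mathrm{Dtr}(t)$ forces $c(\gamma)\in\{\pm1\}$; together with $c(-\gamma)=-c(\gamma)$ this makes $\delta_1=c$ a bijection onto $\{\pm1\}$, as claimed.

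The only nonformal input, and the point where I expect to have to be careful, is this last computation: both the Bökstedt–Hsiang–Madsen identification of the trace on $\mathbb{S}[\mathbb{Z}]$ with the constant‑loop map, and the verification that the generator of $\pi_1(\mathbb{T})$ is sent to $t$ (any sign, or a correction lying in higher filtration, would not affect the conclusion). Alternatively one could extract $c(\gamma)=\pm1$ from the classical chain‑level formula for the Dennis trace on $\GL_1$, but then a comparison with the definition of $\mathrm{Dtr}$ via the universal property of $K\to\THH$ would still be needed; everything else is formal.
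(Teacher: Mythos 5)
Your proposal is correct, and it takes a genuinely different route from the paper. The paper works entirely at the chain level: it invokes the classical simplicial formula for the Dennis trace on $B\GL_1$ (in simplicial degree $n$, $(a_1,\ldots,a_n)\mapsto \frac{1}{a_1\cdots a_n}\otimes a_1\otimes\cdots\otimes a_n$, cited from B\"okstedt--Hsiang--Madsen and Loday), so that $\mathrm{Dtr}(a)$ is literally the cycle $\frac{1}{a}\otimes a$, and then pins down the sign by making the degree-$1$ HKR isomorphism explicit for each generator $\gamma$ ($a\otimes b\mapsto adb$ versus $a\otimes b\mapsto bda$), which is how $\delta_1$ gets its explicit definition. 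You instead (i) use multiplicativity and Yoneda to reduce everything to the universal case $(\Z[t^{\pm 1}],t)$, where the functional equation $tf(t^2)=f(t)$ forces $\tau_\gamma(t)=c(\gamma)\,\mathrm{dlog}(t)$, and (ii) prove $c(\gamma)=\pm 1$ by an indivisibility argument resting on the B\"okstedt--Hsiang--Madsen identification of the trace on $\mathbb{S}[\Z]$ with the constant-loop inclusion $B\Z\to\mathcal{L}B\Z$, which is split by evaluation. Both arguments have one non-formal citation of comparable weight (the chain-level formula for the abstractly defined trace in the paper's case, the BHM theorem on spherical group rings in yours), and you correctly flag yours. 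What your approach buys is robustness against sign and normalization errors -- you never need to compute the actual class of $\mathrm{Dtr}(t)$, only that it generates a direct summand -- at the cost of defining $\delta_1(\gamma)$ only implicitly as the universal constant $c(\gamma)$ rather than via an explicit orientation convention; since the rest of the paper only uses that $\delta_1(\gamma)\in\{\pm 1\}$ and $\delta_1(-\gamma)=-\delta_1(\gamma)$, this implicit definition is sufficient.
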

\begin{proof}
Let $A$ be any commutative ring. The Hochschild homology
$\HH(A)$ can be calculated as the geometric realization
$$
\HH(A):=\varinjlim\limits_{\Delta^{\mathrm{op}}}
A^{{\otimes^{\mathbb{L}}_{\Z} }^{n+1}}, 
$$
Note that this representation, which relies on the standard simplicial model of the circle $\Delta^1/\partial \Delta^1$, depends implicitly on the choice of a generator $\gamma_0$ of $H_1(\mathbb{T},\Z)$, cf. \cite[Theorem 2.3]{hoyois_the_homotopy_fixed_points_of_the_circle_action_on_hochschild_homology}\footnote{In this reference, $\gamma_0$ is called $\gamma$.}.
Replacing the derived tensor product by the non-derived one obtains the classical, non-derived Hochschild homology $\HH^{\mathrm{usual}}(A)$ of $A$. As 
$$
\pi_1(\HH(A))\cong \pi_1(\HH^{\mathrm{usual}}(A))
$$ 
we may argue using $\HH^{\mathrm{usual}}$ instead of $\HH$.

Using the above description of the classical Hochschild homology, the Dennis trace can be described more concretely, cf.\ \cite[Section 5]{boekstedt_hsiang_madsen_the_cyclotomic_trace_and_algebraic_k_theory_of_spaces}, \cite[Chapter 8.4]{loday_cyclic_homology}.  
It factors (on homotopy groups) through the integral
group homology of $\GL(A)$, i.e., through $H_\ast(B\GL(A),\Z)$, which
is by definition (and the Dold-Kan correspondence) the homotopy of the
space $\Z[B\GL(A)]$ obtained by taking the free simplicial abelian
group on the simplicial $B\GL(A)$. As the $+$-construction
$$
B\GL(A)\to B\GL(A)^+
$$
is an equivalence on integral homology (cf.\ \cite[Chapter IV.Theorem 1.5]{weibel_the_k_book}) the morphism
$$
\Z[B\GL(A)]\simeq \Z[B\GL(A)^+]
$$ 
is an equivalence of simplicial abelian groups and using the canonical
inclusion
$$
B\GL(A)^+\to \Z[B\GL(A)^+]
$$ 
we arrive at a canonical morphism
$$
K(A)\to B\GL(A)^+\to\Z[B\GL(A)^+]\simeq \Z[B\GL(A)].
$$

We observe that for $r=1$ the morphism $B\GL_1(A)\to B\GL_1(A)^+$ is
an equivalence as $\GL_1(A)=A^\times$ is abelian. Thus there is a
commutative diagram (up to homotopy)
$$
\xymatrix{
  B\GL_1(A)\ar[r]\ar[d] & \Z[B\GL_1(A)]\ar[d] \\
  K(A) \ar[r] & \Z[B\GL(A)] }
$$
with each morphism being the canonical one.

 The Dennis
trace factors as a composition
$$
\mathrm{Dtr}\colon K(A)\to
\Z[B\GL(A)]\xrightarrow{\mathrm{Dtr}^\prime}\HH^{\mathrm{usual}}(A/\Z),
$$
  where by
construction
$$
\mathrm{Dtr}^\prime\colon \Z[B\GL(A)]\to \HH^{\mathrm{usual}}(A)
$$
is given as the colimit of compatible maps\footnote{Here
  compatible means up to some homotopy. To obtain strict compatibility
  one has to use the normalised Hochschild complex, cf.\ \cite[Section 8.4.]{loday_cyclic_homology}}
$$
\mathrm{Dtr}^\prime_r\colon \Z[B\GL_r(A)]\to \HH^{\mathrm{usual}}(A).
$$

When $r=1$, which is the only case relevant for us, the map
$\mathrm{Dtr}^\prime_1$ is the linear extension of the map
$$
BA^\times\to \HH^{\mathrm{usual}}(A)
$$
which in simplicial degree $n$ is given by
$$
(a_1,\ldots, a_n)\mapsto \frac{1}{a_1\ldots a_n}\otimes a_1\otimes
\ldots \otimes a_n.
$$

Fix a generator $\gamma$ of $H_1(\mathbb{T},\Z)$. The choice of $\gamma$ gives the HKR-isomorphism
$$
\alpha_{\gamma}: \pi_1(\HH^{\mathrm{usual}}(A))\cong \pi_1(\HH(A/\Z))\cong \Omega^1_{A/\Z}.
$$
Using the above description of Hochschild homology as a geometric realization, the isomorphism $\alpha_{\gamma}$ is given by
$$
\pi_1(\HH^{\mathrm{usual}}(A))\cong \Omega^1_{A/\Z},\ a\otimes b\mapsto adb
$$
with inverse $adb\mapsto a\otimes b$, if $\gamma=\gamma_0$, and by
$$
\pi_1(\HH^{\mathrm{usual}}(A))\cong \Omega^1_{A/\Z},\ a\otimes b\mapsto bda
$$
with inverse $bda\mapsto a\otimes b$ if $\gamma=-\gamma_0$; this can be checked by analyzing compatibility with differentials and using \cite[Theorem 2.3]{hoyois_the_homotopy_fixed_points_of_the_circle_action_on_hochschild_homology}. In the first case, we set $\delta_1(\gamma)=1$; in the second case, we set $\delta_1(\gamma)=-1$. Then on homotopy groups the map
$\mathrm{Dtr}_1$ is given by
$$
A^\times\cong \pi_1(BA^\times)\to \pi_1(\HH(A)) \overset{\alpha_{\gamma}} \cong \Omega^1_{A/\Z},\
a\mapsto \delta_1(\gamma) \mathrm{dlog}(a):= \delta_1(\gamma) \frac{da}{a},
$$
as claimed.
\end{proof}

\begin{remark}
\label{old_result}
 Let $A$ be a flat $\Z$-algebra. The description of $\HH(A)=\HH^{\mathrm{usual}}(A)$ as the geometric realization of the simplicial object
$$
\HH(A/\Z):=\varinjlim\limits_{\Delta^{\mathrm{op}}}
A^{{\otimes_{\Z} }^{n+1}}
$$
shows that $\HH(A;\Z_p)$ is computed by the complex
$$
\dots \to (A \otimes_{\Z} A \otimes_{\Z} A)^{\wedge}_p \to  (A \otimes_{\Z} A)^{\wedge}_p \to A^{\wedge}_p.
$$
 One can then show that the $p$-completed Dennis trace
  $(BA^\times)^\wedge_p\to \HH(A;\Z_p)$
  sends an element
$$
(a_1, a_2,\ldots)\in T_p(A^\times)=\pi_2((BA^\times)^\wedge_p)
$$
to the element represented, up to a sign, by the cycle
$$
1\otimes 1\otimes 1+\sum\limits_{n=1}^{\infty} p^{n-1}(\frac{1}{a_n^2}\otimes a_n\otimes a_n+\frac{1}{a_n^3}\otimes a_n^2\otimes a_n+\ldots
+ \frac{1}{a_n^{p}}\otimes a_n^{{p-1}}\otimes a_n).
$$ 
We omit the proof, since we will not use this result.
\end{remark}

We can now state and prove the main result of this section. Fix a generator $\gamma$ of $H_1(\mathbb{T},\Z)$. We will describe the image of some element
$T_p(A^\times)$ under the composition
$$
T_p(A^\times)\xrightarrow{\mathrm{Dtr}} \pi_2(\HH(A;\Z_p))\to
\pi_2(\HH(A/R;\Z_p)) \overset{\alpha_{\gamma}^{-1}}\cong (I/I^2)^\wedge_p,
$$
using the notation of \Cref{lemma_second_hochschild_homology_for_a_surjection}. Recall first the following standard lemma.

\begin{lemma}
  \label{lemma_p_power_compatible_elements_in_the_reduction}
  Let $R$
  be a ring, $I\subseteq R$ an ideal and assume that $R$ is
  $(p,I)$-adically complete. Then the canonical map
$$
R^\flat:=\varprojlim\limits_{x\mapsto x^p} R\to A^\flat:=\varprojlim\limits_{x\mapsto
  x^p} A
$$
with $A=R/I$ is bijective.
\end{lemma}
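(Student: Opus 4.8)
The plan is to construct an explicit inverse to the reduction map $R^\flat\to A^\flat$ by the familiar device of ``untwisting'' a compatible system of $p$-power roots through iterated $p$-th powers; the only non-formal ingredient is a standard congruence estimate. First I would record two consequences of the hypothesis: since $R=\varprojlim_n R/(p,I)^n$, the ring $R$ is $(p,I)$-adically separated; and $A=R/I$ is $p$-adically separated, i.e.\ $\bigcap_n p^nA=0$ — this is a standard fact about quotients of adically complete rings, reflecting that $A=\varprojlim_n A/p^nA$ since $A/p^nA=R/(p^nR+I)=R/((p,I)^n+I)$.

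Injectivity is then immediate. Write elements of $R^\flat$ as sequences $(x_0,x_1,\dots)$ with $x_{n+1}^p=x_n$, so that $x_n=x_{n+m}^{p^m}$ for all $m$. If such a sequence maps to $0$ in $A^\flat$, then each $x_n$ lies in $I$, hence $x_n=x_{n+m}^{p^m}\in I^{p^m}\subseteq (p,I)^{p^m}$ for every $m$, and separatedness of $R$ forces $x_n=0$.

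For surjectivity the key input I would establish first is the congruence: if $b,c\in R$ with $b\equiv c\pmod{I}$, then $b^{p^m}\equiv c^{p^m}\pmod{(p,I)^{m+1}}$ for all $m\geq 0$. This follows by induction on $m$: writing $b^{p^m}=c^{p^m}+j$ with $j$ in the ideal $J_m$ defined by $J_0:=I$ and $J_{m+1}:=pJ_m+J_m^p$, the binomial expansion of $(c^{p^m}+j)^p$ gives $b^{p^{m+1}}\equiv c^{p^{m+1}}\pmod{J_{m+1}}$, and a second induction shows $J_m\subseteq (p,I)^{m+1}$ since $J_{m+1}=pJ_m+J_m^p\subseteq p(p,I)^{m+1}+(p,I)^{p(m+1)}\subseteq (p,I)^{m+2}$. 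Granting this, given $(\bar a_0,\bar a_1,\dots)\in A^\flat$ I would pick arbitrary lifts $a_n\in R$ of $\bar a_n$ and set $x_n:=\lim_{m\to\infty}a_{n+m}^{p^m}$, which exists in the $(p,I)$-complete ring $R$ because applying the congruence to $b=a_{n+m+1}^p\equiv c=a_{n+m}\pmod{I}$ yields $a_{n+m+1}^{p^{m+1}}\equiv a_{n+m}^{p^m}\pmod{(p,I)^{m+1}}$, so the sequence is Cauchy. Then I would check: $x_{n+1}^p=\lim_m a_{n+1+m}^{p^{m+1}}=x_n$ (reindexing), so $x:=(x_n)\in R^\flat$; $x_n$ is independent of the chosen lifts, since two lifts of $\bar a_n$ agree mod $I$ and hence their $p^m$-th powers agree mod $(p,I)^{m+1}$; and $x_n$ reduces to $\bar a_n$, because $x_n-a_{n+m}^{p^m}\in (p,I)^{m+1}\subseteq I+p^{m+1}R$ forces the class of $x_n$ in $A$ to differ from $\bar a_{n+m}^{p^m}=\bar a_n$ by an element of $p^{m+1}A$, and $\bigcap_m p^mA=0$. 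Thus $x\mapsto(\bar a_n)$, proving surjectivity.

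The hard part — such as it is — is the congruence $b\equiv c\pmod{I}\Rightarrow b^{p^m}\equiv c^{p^m}\pmod{(p,I)^{m+1}}$ together with the bookkeeping of the auxiliary ideals $J_m$; this is the only place where a genuine computation, exploiting the interaction of the two generators $p$ and $I$ of the topology, is needed, everything else being formal manipulation of inverse limits. One could alternatively phrase the whole argument as a reduction to the case where $p$ and $I$ act nilpotently, via $R^\flat=\varprojlim_k (R/(p,I)^k)^\flat$ and $A^\flat=\varprojlim_k (A/p^kA)^\flat$ with each reduction modulo a nilpotent ideal, but this uses the same estimate in the guise of the eventual vanishing of $J_m$.
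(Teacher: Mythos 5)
Your proof is correct and follows the same route as the paper: the paper's proof likewise constructs the inverse by sending a compatible system $(\bar a_n)\in A^\flat$ to the limits $\lim_{m\to\infty}(a_{n+m}')^{p^m}$ of $p^m$-th powers of arbitrary lifts, asserting that these limits exist and are independent of the lifts. You have merely supplied the congruence estimates (the ideals $J_m$) and the separatedness/injectivity checks that the paper leaves implicit.
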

\begin{proof}
  It suffices to construct a well-defined, multiplicative map
$$
[-] \colon A^\flat\to R
$$
reducing to the first projection modulo $I$.  Let
$$
r:=(r_0,r_1,\ldots)\in A^\flat
$$
be a
$p$-power compatible system of elements in $A$ with lifts
$r_i^\prime\in R$ of each $r_i$. Then the limit
$$
\lim\limits_{n\to \infty} (r_n^\prime)^{p^n}
$$
exists and is independent of the lift. Thus
$$
[r]:=\lim\limits_{n\to \infty} (r_n^\prime)^{p^n}
$$
defines the desired map.
\end{proof}

The morphism
$$
[-]\colon A^\flat\to R
$$
is the Teichm\"uller lift for the surjection $\pi\colon R\to R/I$. If we want to make its dependence of the surjection clear, we write $[-]_{\pi}$.
Let 
$$
T_pA^\times=\varprojlim\limits_{x\mapsto x^p}A^\times[p^n]
$$ 
be the Tate module of $A^\times$. Then we embed $T_pA^\times$ into $A^\flat$ as the sequences with first coordinate $1$.
For any $a\in A^\flat $ we define
$$
[a]:=r_0
$$
where $r=(r_0,r_1,\ldots)\in R^\flat$ is the unique element reducing to $a$. If $a=(1,a_1,a_2,\ldots)$ lies in $T_pA^\times$, then $[a]\in 1+I$.

\begin{proposition}
  \label{proposition_second_description_of_dennis_trace}
 Fix a generator $\gamma\in H_1(\mathbb{T},\Z)$. Let $R$ be a ring and $I\subseteq R$ an ideal such that $R$ is $(p,I)$-adically complete. Let
  $A=R/I$. Then the composition
$$
T_p(A^\times)\cong
\pi_2((BA^\times)^\wedge_p)\xrightarrow{\mathrm{Dtr}}
\pi_2(\HH(A/R;\Z_p))\cong (I/I^2)^\wedge_p
$$
is given by sending $a\in T_p(A^\times)$ to
$$
\delta_1(\gamma)([a]-1),
$$
 with $\delta_1(\gamma)\in \{\pm 1\}$ is the sign from \Cref{sec:p-completed-dennis-1-dennis-trace-in-degree-1}.
\end{proposition}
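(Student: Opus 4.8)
The plan is to reduce by naturality to a single universal situation, and there to bootstrap from the degree-$1$ formula of \Cref{sec:p-completed-dennis-1-dennis-trace-in-degree-1} using the Bockstein homomorphism. First I would record that every map occurring in the statement — the map $T_p(A^\times)\to\pi_2(K(A;\Z_p))$, the Dennis trace, and the isomorphism $\pi_2(\HH(A/R;\Z_p))\cong(I/I^2)^\wedge_p$ of \Cref{lemma_second_hochschild_homology_for_a_surjection} — is natural in the pair $(R\twoheadrightarrow A)$. Given $(R,I,a)$, writing $a=(1,a_1,a_2,\dots)$ and letting $(r_0,r_1,\dots)\in R^\flat$ be the lift of \Cref{lemma_p_power_compatible_elements_in_the_reduction} (so $[a]=r_0\in 1+I$), the assignment $x^{1/p^n}\mapsto r_n$ defines a homomorphism $R_0:=(\Z[x^{1/p^\infty}])^{\wedge}_{(p,x-1)}\to R$ (it carries $(p,x-1)$ into $(p,I)$, along which $R$ is complete), sending $I_0:=(x-1)R_0$ into $I$, hence inducing $A_0:=R_0/I_0\to A$ under which the tautological element $a_0:=(1,x^{1/p},x^{1/p^2},\dots)\bmod I_0$ maps to $a$ and $[a_0]-1=x-1$ maps to $[a]-1$. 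So it suffices to prove the statement for $(R_0,I_0,a_0)$, where $\overline{x-1}$ freely generates $(I_0/I_0^2)^{\wedge}_p$.

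The crucial observation will be that $R_0/p=(\F_p[x^{1/p^\infty}])^{\wedge}_{(x-1)}$ is semiperfect, so $\widehat{L}_{R_0/\Z}=0$. This has three consequences: $\HH(R_0/\Z;\Z_p)\simeq R_0$ and the natural map $\HH(A_0/\Z;\Z_p)\to\HH(A_0/R_0;\Z_p)$ is an equivalence, under which the relevant Dennis trace becomes the absolute one $K(A_0;\Z_p)\to\HH(A_0/\Z;\Z_p)$ computed in degree $1$ by \Cref{sec:p-completed-dennis-1-dennis-trace-in-degree-1}; working with $A_0'=\Z[\Q_p/\Z_p]=R_0'/(x-1)$ (where $R_0'=\Z[x^{1/p^\infty}]$, so $A_0=(A_0')^{\wedge}_p$), the conormal sequence $0\to A_0'\overline{(x-1)}\to\Omega^1_{R_0'/\Z}\otimes_{R_0'}A_0'\to\Omega^1_{A_0'/\Z}\to 0$ has uniquely $p$-divisible middle term, so $L_{A_0'/\Z}\simeq\Omega^1_{A_0'/\Z}$ in degree $0$, whence $\pi_1\HH(A_0/\Z)=\Omega^1_{A_0'/\Z}$ and $\pi_2\HH(A_0/\Z)=\wedge^2_{A_0'}\Omega^1_{A_0'/\Z}$ is $p$-divisible; and $\widehat{L}_{A_0/\Z}\simeq(\Omega^1_{A_0'/\Z})^{\wedge}_p\simeq(T_p\Omega^1_{A_0'/\Z})[1]$.

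Next I would compute the trace via the Bockstein. As $\pi_2\HH(A_0/\Z)$ is $p$-divisible, the Bockstein $\beta\colon\pi_2(\HH(A_0/\Z);\Z/p^n)\xrightarrow{\ \sim\ }\pi_1\HH(A_0/\Z)[p^n]=\Omega^1_{A_0'/\Z}[p^n]$ is an isomorphism for all $n$. The element $a_0\in T_p(A_0^\times)=\varprojlim_n\pi_2(BA_0^\times;\Z/p^n)$ has $n$-th component the $p^n$-torsion unit $t_n:=x^{1/p^n}\bmod I_0\in A_0^\times[p^n]$, whose Bockstein in $\pi_1(BA_0^\times)=A_0^\times$ is $t_n$ itself; so naturality of $\beta$ under the Dennis trace and \Cref{sec:p-completed-dennis-1-dennis-trace-in-degree-1} give $\beta(\mathrm{Dtr}(t_n))=\delta_1(\gamma)\,t_n^{-1}dt_n$, hence $\mathrm{Dtr}(t_n)=\delta_1(\gamma)\,\beta^{-1}(t_n^{-1}dt_n)$. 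Letting $n\to\infty$ (the transition maps on $\Omega^1_{A_0'/\Z}[p^n]$ are multiplication by $p$, and $t_n^{-1}dt_n=p\,t_{n+1}^{-1}dt_{n+1}$ in $\Omega^1_{A_0'/\Z}$) I obtain $\mathrm{Dtr}(a_0)=\delta_1(\gamma)\,(t_n^{-1}dt_n)_n\in T_p\Omega^1_{A_0'/\Z}=\pi_2\HH(A_0/R_0;\Z_p)$.

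Finally I must identify $(t_n^{-1}dt_n)_n$ with $\overline{x-1}$ under $T_p\Omega^1_{A_0'/\Z}=H_1(\widehat{L}_{A_0/\Z})\xrightarrow{\ \sim\ }H_1(\widehat{L}_{A_0/R_0})=(I_0/I_0^2)^{\wedge}_p$. Because $\widehat{L}_{R_0/\Z}=0$, this isomorphism is the connecting map of the $p$-completed conormal sequence above (whose middle term is $p$-completely zero), which sends a $p$-power compatible system $(m_n)_n$ to $(p^n\widetilde{m}_n\bmod p^n)_n$ for any lifts $\widetilde{m}_n\in\Omega^1_{R_0'/\Z}\otimes_{R_0'}A_0'$; taking $\widetilde{m}_n=t_n^{-1}\omega_n$ with $\omega_n:=dx^{1/p^n}\otimes1$ (a lift of $t_n^{-1}dt_n$), the relation $\omega_0=p^n\big(\prod_{i=1}^n t_i^{p-1}\big)\omega_n$ and the telescoping identity $\prod_{i=1}^n t_i^{1-p}=\prod_{i=1}^n t_it_{i-1}^{-1}=t_n$ (since $t_0=1$) give $p^n\cdot t_n^{-1}\omega_n=\omega_0$, which corresponds to $\overline{x-1}$. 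Thus $\mathrm{Dtr}(a_0)=\delta_1(\gamma)\,\overline{x-1}=\delta_1(\gamma)\big(\overline{[a_0]-1}\big)$, and the general case follows. The main obstacle is precisely the observation $\widehat{L}_{R_0/\Z}=0$ and its exploitation: it identifies the relative and absolute $p$-completed Hochschild homologies, makes the Bockstein an isomorphism so that the degree-$1$ formula propagates to degree $2$, and turns the transitivity isomorphism into the concrete connecting map evaluated above; the remaining points (the compatibility of $\alpha_\gamma$ with the HKR description of $\pi_2\HH(A_0/\Z;\Z_p)$, and the vanishing of the $\varprojlim^1$-terms — automatic since $\HH(A_0/R_0)$ is concentrated in even degrees and $A_0$ is $p$-torsion-free) are routine.
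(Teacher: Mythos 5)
Your proof is correct and follows essentially the same route as the paper's: reduce by naturality to the universal $\Z[x^{1/p^\infty}]$-situation, propagate the degree-$1$ formula of \Cref{sec:p-completed-dennis-1-dennis-trace-in-degree-1} to degree $2$ using that $\pi_2$ of Hochschild homology of the universal quotient is $p$-divisible (so that $\pi_2(\HH;\Z_p)\cong T_p\pi_1(\HH)$, which is your Bockstein step), and then evaluate the boundary map of the $p$-completed transitivity triangle on the compatible system $(d\log t_n)_n$ to land on $\overline{x-1}$ --- exactly the paper's computation $d\log(t)=p^nd\log(t^{1/p^n})$. The only difference is cosmetic: you complete the universal ring (which obliges you to verify that $R_0$ is $p$-torsion free, that $R_0/p$ is perfect, and that quotienting by $(x-1)$ commutes with completion), whereas the paper works with the uncompleted $\Z[t^{1/p^\infty}]$ and instead uses that $\Omega^1_{R/\Z}\otimes_R A$ is uniquely $p$-divisible.
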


\begin{proof}\footnote{The following argument is simpler than our original argument and was suggested by the referee. We thank her/him for allowing us to include it.
}  Fix $a\in T_p(A^\times)$. Then there exists, by $(p,I)$-adic completeness of $R$, a unique morphism
  $\Z[1/p]\to R^\times$ of abelian groups such that
  $$
  1/p^n\mapsto [a^{1/p^n}].
  $$
    By naturality, it therefore suffices to check that for
    $$
    R:=\Z[t^{1/p^\infty}]\cong \Z[\Z[1/p]]
    $$
    and
    $$
    A:=R/(t-1)\cong \Z[\Q_p/\Z_p],
    $$
    then, under the morphism,
    $$
    T_pA^\times\xrightarrow{\mathrm{Dtr}} \HH_2(A;\Z_p)\to \HH_2(A/R;\Z_p)\cong L_{A/R}[-1]\cong (t-1)/(t-1)^2
    $$
    the element $(1,t^{1/p},t^{1/p^2},\ldots )\in A^\flat$ is mapped to the class of $\delta_1(\gamma)(t-1)$. This is what we will do.
    
    Observe first that the Hochschild homology
    $$
    \HH_2(A)
    $$
    vanishes. Indeed, it is easy to see that $L_{A/\Z}$ is concentrated in degree $0$. Moreover, $\Omega_{A/\Z}^1\cong L_{A/\Z}$ is generated by one element. This implies that
    $$
    \pi_0(\wedge^nL_{A/\Z})=0
    $$
    for $n\geq 2$ (cf.\ the proof of \cite[Corollary 3.13]{bhatt_p_adic_derived_de_rham_cohomology}). By the HKR-filtration, we get that $\HH_2(A)=0$.
    Passing to $p$-completions we can conclude that
    $$
    \HH_2(A;\Z_p)\cong T_p\HH_1(A)\overset{\alpha_\gamma}{\cong} T_p(\Omega^1_{A/\Z}),
    $$
where the last isomorphism is the HKR-isomorphism (for $\gamma$).
    
    There is a commutative diagram
    $$
    \xymatrix{
      \HH_2(A;\Z_p)\ar[r]^\cong\ar[d]^{\cong} & \HH_2(A/R;\Z_p)\ar[d]^{\cong} \\
      T_p\Omega_{A/\Z}^1\cong \pi_1((L_{A/\Z})^\wedge_p)\ar[r]^-\cong & \pi_1((L_{A/R})^\wedge_p)\cong ((t-1)/(t-1)^2)^\wedge_p. 
    }
    $$
    Using \Cref{sec:p-completed-dennis-1-dennis-trace-in-degree-1} the element
    $$
    (1,t^{1/p},t^{1/p^2},\ldots )\in T_pA^\times
    $$
    is mapped to the element
    $$
    \delta_1(\gamma)(0,d\log(t^{1/p}),d\log(t^{1/p^2}),\ldots)\in T_p(\Omega^1_{A/\Z}).
    $$
    The effect of the bottom row can be calculated using the exact triangle
    $$
    L_{R/\Z}\otimes^{\mathbb{L}}_{R} A\to L_{A/\Z}\xrightarrow{\beta} L_{A/R}
    $$
    and applying $p$-completions. More precisely, rotating plus the isomorphisms
    $$
    L_{R/\Z}\cong \Omega_{R/\Z}^1,\ L_{A/R}\cong (t-1)/(t-1)^2[1]
    $$
    yield the exact triangle
    $$
    (t-1)/(t-1)^2\xrightarrow{d} \Omega_{R/\Z}^1\otimes_{R} A\to \Omega_{A/R}^1\to (t-1)/(t-1)^2[1] 
    $$
    where the first morphism is the differential.
    Now apply (derived) $p$-completion to this exact triangle, the resulting connecting morphism
    $$
    T_p(\Omega^1_{A/\Z})\to (t-1)/(t-1)^2
    $$
    sends to $(0,d\log(t^{1/p}),d\log(t^{1/p^2}),\ldots )$ to $t-1$ as $t-1 \equiv \frac{t-1}{t}$ mod $(t-1)^2$ and
    $$
    \frac{d(t-1)}{t}=d\log(t)=p^nd\log(t^{1/p^n})
    $$
    for all $n\geq 0$\footnote{If $0\to M\to N\to Q\to 0$ is a short exact sequence of abelian groups, then the boundary map $T_pQ\to M^{\wedge}_p$ has the following description: take $x:=(q_i)_{i\geq 0}\in T_pQ$ and lift each $q_i$ to some $n_i\in N$. Then $p^in_i\in M$ and the limit $\varprojlim\limits p^in_i\in M^{\wedge}_p$ exists and is the image of $x$.}. Thus,
    $$
    \beta((0,d\log(t^{1/p}),d\log(t^{1/p^2}),\ldots)=t-1
    $$
    as claimed. 
\end{proof}

We recall the following lemma.  For a perfect ring $S$ we denote
its ring of Witt vectors by $W(S)$.

\begin{lemma}
  \label{lemma_p_completion_does_not_change_under_witt_vector_base}
  Let $S$ be a perfect ring and let $A$ be an $W(S)$-algebra. Then the
  canonical morphism
$$
\HH(A;\Z_p)\to \HH(A/W(S);\Z_p)
$$
is an equivalence.
\end{lemma}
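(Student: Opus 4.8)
The plan is to reduce the statement to the vanishing of the derived $p$-completion $(L_{W(S)/\Z})^{\wedge}_p$ of the cotangent complex of $W(S)$ over $\Z$, and to deduce that vanishing from the triviality of the cotangent complex of a perfect $\F_p$-algebra. First I would establish $(L_{W(S)/\Z})^{\wedge}_p\simeq 0$ by factoring $\Z\to\Z_p\to W(S)$. The map $\Z\to\Z_p$ is flat with $\Z_p/p=\F_p$, so base change for the cotangent complex gives $L_{\Z_p/\Z}\otimes^{\mathbb{L}}_{\Z}\F_p\simeq L_{\F_p/\F_p}=0$. The map $\Z_p\to W(S)$ has $W(S)$ $p$-torsion free with $W(S)/p\cong S$, so $L_{W(S)/\Z_p}\otimes^{\mathbb{L}}_{\Z}\F_p\simeq L_{W(S)/\Z_p}\otimes^{\mathbb{L}}_{W(S)}S\simeq L_{S/\F_p}$, and the latter vanishes since $S$ is a perfect $\F_p$-algebra (the one genuinely external input; cf.\ \cite{bhatt_p_adic_derived_de_rham_cohomology}). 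In each case $-\otimes^{\mathbb{L}}_{\Z}\F_p$ of the cotangent complex vanishes, hence multiplication by $p$ is invertible on it and its derived $p$-completion vanishes; the transitivity triangle $W(S)\otimes^{\mathbb{L}}_{\Z_p}L_{\Z_p/\Z}\to L_{W(S)/\Z}\to L_{W(S)/\Z_p}$ then gives $L_{W(S)/\Z}\otimes^{\mathbb{L}}_{\Z}\F_p\simeq 0$, and in particular $(L_{W(S)/\Z})^{\wedge}_p\simeq 0$.

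Next I would transfer this to $A$. The transitivity triangle $A\otimes^{\mathbb{L}}_{W(S)}L_{W(S)/\Z}\to L_{A/\Z}\to L_{A/W(S)}$ shows that $L_{A/\Z}\to L_{A/W(S)}$ is an equivalence after $-\otimes^{\mathbb{L}}_{\Z}\F_p$. Since the derived exterior power functors commute with derived base change, the same holds for the maps $\wedge^i L_{A/\Z}\to\wedge^i L_{A/W(S)}$ for all $i\geq 0$. Now the canonical morphism $\HH(A)=\HH(A/\Z)\to\HH(A/W(S))$ respects the HKR filtrations (on polynomial algebras it is filtered, and both filtrations are left Kan extended), inducing $\wedge^i L_{A/\Z}[i]\to\wedge^i L_{A/W(S)}[i]$ on the $i$-th graded piece. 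The HKR filtration is complete and exhaustive (its $i$-th graded piece is $i$-connective and $\mathrm{Fil}^0_{\mathrm{HKR}}$ is everything), and these properties persist after $-\otimes^{\mathbb{L}}_{\Z}\F_p$; hence the map $\HH(A/\Z)\otimes^{\mathbb{L}}_{\Z}\F_p\to\HH(A/W(S))\otimes^{\mathbb{L}}_{\Z}\F_p$, being a filtered map between complete filtered objects which is an equivalence on associated graded pieces, is an equivalence. Therefore the cofibre of $\HH(A/\Z)\to\HH(A/W(S))$ has invertible multiplication by $p$, so its derived $p$-completion vanishes and $\HH(A;\Z_p)\to\HH(A/W(S);\Z_p)$ is an equivalence.

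The argument is essentially formal once $L_{S/\F_p}\simeq 0$ is granted; the only point requiring care is the interaction of derived $p$-completion with the HKR filtration and with the exterior powers. I would deal with this by never commuting $(-)^{\wedge}_p$ past $\wedge^i$ directly, but instead arguing modulo $p$ — where exterior powers commute with base change and completeness of the filtration is immediate — and only at the very end invoking that a map whose cofibre is killed by $-\otimes^{\mathbb{L}}_{\Z}\F_p$ is an equivalence after derived $p$-completion. Alternatively one could bypass the cotangent complex entirely and use the base change formula for Hochschild homology to reduce to the statement $\HH(W(S)/\Z;\Z_p)\simeq W(S)$, which itself follows from the same computation $(L_{W(S)/\Z})^{\wedge}_p\simeq 0$.
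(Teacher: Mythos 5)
Your proof is correct and follows essentially the same route as the paper: both reduce via the HKR filtration to showing that $L_{A/\Z}\to L_{A/W(S)}$ becomes an equivalence after $-\otimes^{\mathbb{L}}_{\Z}\F_p$, with the key input being $L_{S/\F_p}\simeq 0$ for $S$ perfect. The only (immaterial) difference is bookkeeping: the paper first reduces to $p$-torsion-free $A$ via polynomial $W(S)$-algebras and uses the transitivity triangle for $\F_p\to S\to A/p$, whereas you kill $L_{W(S)/\Z}$ mod $p$ directly and use the triangle for $\Z\to W(S)\to A$.
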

\begin{proof}
  By the HKR-filtration, it suffices to see that the canonical
  morphism
$$
L_{A/\Z}\to L_{A/W(S)}
$$
of cotangent complexes is a $p$-adic equivalence, i.e., an equivalence
after $-\otimes^{\mathbb{L}}_\Z \Z/p$. Computing the right hand side
by polynomial algebras over $W(S)$ we see that it suffices to consider
the case that $A$ is $p$-torsion free. Then by base change
$$
L_{A/\Z}\otimes^{\mathbb{L}}_\Z \Z/p\cong L_{(A/p)/\F_p}
$$
resp.\
$$
L_{A/W(S)}\otimes^{\mathbb{L}}_\Z \Z/p\cong L_{(A/p)/S}
$$
and the claim follows from the transitivity triangle
$$
A/p\otimes^{\mathbb{L}}_S L_{S/\F_p}\to L_{(A/p)/\F_p}\to L_{(A/p)/S}
$$
using that $S$ is perfect which implies that the cotangent complex $L_{S/\F_p}$ of $S$ over $\F_p$ vanishes.
\end{proof}

\section{Transversal prisms}
\label{sec:some-results-prisms}

In this section we want to prove the crucial injectivity statement (\Cref{sec:transversal-prisms-corollary-injectivity-for-reduction-modulo-second-nygaard-filtration}) mentioned in the introduction.
Let us recall the following definition from \cite{bhatt_scholze_prisms_and_prismatic_cohomology}. 

\begin{definition}
A \textit{$\delta$-ring} is a pair $(A,\delta)$, where $A$ is a commutative ring, $\delta\colon A \to A$ a map of sets, with $\delta(0)=0$, $\delta(1)=0$, and
\[ \delta(x+y)=\delta(x)+\delta(y)+ \frac{x^p+y^p-(x+y)^p}{p} ~~ ; ~~ \delta(xy)=x^p \delta(y)+ y^p \delta(x) +p\delta(x)\delta(y), \]
for all $x,y \in A$. 

A \textit{prism} $(A,I)$ is a $\delta$-ring $A$ with an ideal $I$ defining a Cartier divisor on $\mathrm{Spec}(A)$, such that $A$ is derived $(p,I)$-adically complete and $p\in (I,\varphi(I))$.
\end{definition}

Here, the map
$$
\varphi\colon A\to A,\ x\mapsto \varphi(x):=x^p+p\delta(x)
$$
denotes the lift of Frobenius induced from $\delta$-structure on $A$. 
We will make the (usually harmless) assumption that $I=(\tilxi)$ is generated by some distinguished element $\tilxi\in A$, i.e., $\tilxi$ is a non-zero divisor and $\delta(\tilxi)$ is a unit.

\begin{definition}
  \label{sec:some-results-about-definition-transversal-prism}
  We call a prism \textit{transversal} if $(p,\tilxi)$ is a regular sequence on $A$.
\end{definition}

Let us fix a transversal prism $(A,I)$.
In particular, $A$ is $p$-torsion free.
Moreover, $A$ is classically $(p,I)$-adically complete. Indeed, $(p,\tilxi)$ being a regular sequence implies that
$$
A\otimes^{\mathbb{L}}_{\Z[x,y]}\Z[x,y]/(x^n,y^n)\cong A/(p^n,\tilxi^n)
$$
and therefore
$$
A\cong R\varprojlim\limits_n(A\otimes^{\mathbb{L}}_{\Z[x,y]}\Z[x,y]/(x^n,y^n)\cong R\varprojlim\limits_n(A/(p^n,\tilxi^n))\cong \varprojlim\limits_nA/(p^n,\tilxi^n)
$$
using Mittag-Leffler for the last isomorphism.

We set
$$
I_r:=I\varphi(I)\ldots \varphi^{r-1}(I)
$$
for $r\geq 1$ (where $\varphi^0(I):=I$).
Then $I_r=(\tilxi_r)$ with
$$
\tilxi_r:=\tilxi\varphi(\tilxi)\cdots \varphi^{r-1}(\tilxi).
$$
\begin{lemma}
  \label{lemma_p_torsionfree_implies_nonzerodivisor}
  For all $r\geq 1$ the element
  $$
  \varphi^r(\tilxi)
  $$
  is a non-zero divisor and $(\varphi^r(\tilxi),p)$ is again a
  regular sequence.  In particular, the elements $\tilxi_r$,
  $r\geq 1$, are non-zero divisors.
\end{lemma}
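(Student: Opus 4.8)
The plan is to push everything modulo $p$, where the Frobenius lift $\varphi$ becomes the honest $p$-power map, and then lift the conclusions back to $A$ using that $A$ is $p$-torsion free and $p$-adically separated.

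First I would record the two structural facts about $A$ that power the argument. Since $(p,\tilxi)$ is a regular sequence, $p$ is a non-zero divisor, i.e.\ $A$ is $p$-torsion free. And since $A$ is classically $(p,I)$-adically complete (as observed just above the lemma), the map $A\to\varprojlim_n A/(p,\tilxi)^n$ is injective, so $\bigcap_n (p,\tilxi)^n A=0$; as $p^nA\subseteq(p,\tilxi)^nA$, this gives $\bigcap_n p^nA=0$, i.e.\ $A$ is $p$-adically separated. Both of these are genuine (classical) statements, not just derived ones, and that will matter.

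Next, modulo $p$ one has $\varphi(x)\equiv x^p$, hence $\varphi^r(\tilxi)\equiv\tilxi^{p^r}\pmod p$. Regularity of $(p,\tilxi)$ means $\tilxi$ is a non-zero divisor on $A/p$, hence so is $\tilxi^{p^r}$; thus $\varphi^r(\tilxi)$ is a non-zero divisor modulo $p$. I would then upgrade this in the standard way: if $\varphi^r(\tilxi)a=0$, reducing mod $p$ forces $a\in pA$, say $a=pa_1$, and $p$-torsion freeness gives $\varphi^r(\tilxi)a_1=0$; iterating puts $a\in\bigcap_n p^nA=0$, so $\varphi^r(\tilxi)$ is a non-zero divisor on $A$. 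The same mechanism shows $p$ is a non-zero divisor on $A/\varphi^r(\tilxi)A$: from $pa=\varphi^r(\tilxi)b$ the non-zero-divisor property mod $p$ gives $b\in pA$, and cancelling $p$ (legitimate by $p$-torsion freeness) gives $a\in\varphi^r(\tilxi)A$. Finally $A/(p,\varphi^r(\tilxi))\cong(A/p)/(\tilxi^{p^r})$ is non-zero, because $\tilxi$ is not a unit in $A/p$ (otherwise $A/(p,\tilxi)=0$, contradicting regularity of $(p,\tilxi)$). Hence $(\varphi^r(\tilxi),p)$ is a regular sequence. For the last assertion, $\tilxi_r=\tilxi\,\varphi(\tilxi)\cdots\varphi^{r-1}(\tilxi)$ is a product of non-zero divisors — the factor $\tilxi$ by transversality of $(A,I)$ and the factors $\varphi^j(\tilxi)$ for $1\le j\le r-1$ by what was just proved — hence itself a non-zero divisor.

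There is no deep obstacle here; the only delicate point is the passage from ``non-zero divisor modulo $p$'' to ``non-zero divisor on $A$'' (and the analogous statement for $p$ on the quotient), which is exactly where $p$-adic separatedness of $A$ is used. Since that separatedness relies on genuine classical $(p,I)$-completeness rather than mere derived completeness, the one thing to be careful about is that this input is cleanly in place — which it is, from the discussion preceding the lemma.
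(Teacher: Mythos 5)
Your proof is correct and follows essentially the same route as the paper's: reduce modulo $p$, where $\varphi^r(\tilxi)\equiv\tilxi^{p^r}$ is a non-zero divisor by transversality, and then swap the order of the regular sequence using classical $p$-adic completeness/separatedness of $A$. Your iteration argument (peeling off powers of $p$ and invoking $\bigcap_n p^nA=0$) is just an unpacked version of the paper's inverse-limit argument over the injections $A/p^n\xrightarrow{\varphi^r(\tilxi)}A/p^n$, and your direct verification that $p$ is a non-zero divisor on $A/\varphi^r(\tilxi)$ is the content of the general swapping lemma the paper cites.
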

\begin{proof}
    The regularity of the sequence $(p,\varphi^r(\tilxi))$, or equivalently of $(p,\tilxi^{p^r})$, follows from the one of $(p,\tilxi)$. The regularity of $(\varphi(\tilxi^{p^r}),p)$ follows from this and the fact that in any ring $R$ with a regular sequence $(r,s)$ such that $R$ is $r$-adically complete the sequence $(s,r)$ is again regular\footnote{Passing to the inverse limit of the injections $R/r^{n}\xrightarrow{s}R/r^n$ implies that $s\in R$ is a non-zero divisor. Thus, $(r,s)$ is regular and $s$ is regular, which implies that $(s,r)$ is regular.}.
  \end{proof}

\begin{lemma}
  \label{lemma_i_r_completeness}
  The ring $A$ is complete for the topology induced by the ideals
  $I_r$, i.e.,
  $$
  A\cong \varprojlim\limits_{r}A/I_r.
  $$
\end{lemma}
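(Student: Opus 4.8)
The plan is to exploit that the ideals $I_r$ shrink quickly — namely $I_r\subseteq(p,\tilxi)^r$ — so that the statement reduces to the classical $(p,\tilxi)$-adic completeness of $A$ (established just above) together with the fact that each $I_r$ is a \emph{closed} ideal for the $(p,\tilxi)$-adic topology; this last point is where transversality really enters.

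\textbf{Step 1 (the $I_r$ are small).} For a distinguished element one has $\varphi^j(\tilxi)\equiv\tilxi^{p^j}\bmod p$ for all $j\ge0$ (iterate the congruence $\varphi(x)\equiv x^p\bmod p$), so every factor $\varphi^j(\tilxi)$ of $\tilxi_r$ lies in $(p,\tilxi)$, hence $I_r=(\tilxi_r)\subseteq(p,\tilxi)^r$. Consequently $I_{r+1}\subseteq I_r$ (so the transition maps $A/I_{r+1}\to A/I_r$ are defined); $\bigcap_rI_r\subseteq\bigcap_r(p,\tilxi)^r=0$ by $(p,\tilxi)$-adic separatedness, which gives injectivity of $A\to\varprojlim_rA/I_r$; and any compatible system $(\bar a_r)_r$, lifted to a sequence $(a_r)_r$ in $A$ with $a_{r+1}-a_r\in I_r$, is $(p,\tilxi)$-adically Cauchy (for $r\le r'$, $a_{r'}-a_r=\sum_{j=r}^{r'-1}(a_{j+1}-a_j)\in I_r\subseteq(p,\tilxi)^r$); let $a\in A$ be its limit.

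\textbf{Step 2 (reduction to closedness).} For fixed $r$ and any $N$, pick $k$ with $a-a_{r+k}\in(p,\tilxi)^N$ and write $a-a_r=(a-a_{r+k})+(a_{r+k}-a_r)$ with $a_{r+k}-a_r\in I_r$ (telescoping); this gives $a-a_r\in I_r+(p,\tilxi)^N$ for every $N$, i.e.\ $a-a_r$ lies in the $(p,\tilxi)$-adic closure of $I_r$. Hence, once $I_r$ is shown to be $(p,\tilxi)$-adically closed, we get $a-a_r\in I_r$ for all $r$, so $a\mapsto(\bar a_r)_r$ and $A\to\varprojlim_rA/I_r$ is also surjective, proving the lemma.

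\textbf{Step 3 (closedness of $I_r$ — the main obstacle).} Set $m=(p^r-1)/(p-1)$. From $\tilxi_r\equiv\tilxi^m\bmod p$ one gets $(p,\tilxi_r)=(p,\tilxi^m)$, whence $(p,\tilxi)^m\subseteq(p,\tilxi_r)\subseteq(p,\tilxi)$; thus the $(p,\tilxi_r)$-adic and $(p,\tilxi)$-adic topologies on $A$ coincide (in particular $A$ is classically $(p,\tilxi_r)$-adically complete and separated), and it suffices to show $\bigcap_N(I_r+(p,\tilxi_r)^N)=I_r$. Moreover $\tilxi$, hence $\tilxi^m$, hence $\tilxi_r$, is a non-zero-divisor modulo $p$ by transversality, so $(p,\tilxi_r)$ — and then each $(p^N,\tilxi_r)$, $N\ge1$ — is a regular sequence. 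Given $x\in\bigcap_N(I_r+(p,\tilxi_r)^N)$, write $x=\tilxi_rb_N+c_N$ with $c_N\in(p,\tilxi_r)^N$; then $\tilxi_r(b_N-b_{N+1})=c_{N+1}-c_N\in(p,\tilxi_r)^N=\tilxi_r(p,\tilxi_r)^{N-1}+(p^N)$, so $\tilxi_r(b_N-b_{N+1}-w)\in(p^N)$ for some $w\in(p,\tilxi_r)^{N-1}$, and since $\tilxi_r$ is a non-zero-divisor modulo $p^N$ we conclude $b_N-b_{N+1}\in(p,\tilxi_r)^{N-1}$. Hence $(b_N)_N$ is Cauchy; let $b\in A$ be its limit. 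Since $x-\tilxi_rb=c_N+\tilxi_r(b_N-b)$ for every $N$ while the right side tends to $0$, we get $x=\tilxi_rb\in I_r$.

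I expect Step 3 to be the only real difficulty: Steps 1--2 determine the candidate limit $a$ only up to the $(p,\tilxi)$-adic closure of $I_r$, and since $A$ need not be Noetherian there is no Krull intersection theorem available. The substitute is the observation that, modulo $p$, $\tilxi_r$ is a power of $\tilxi$, which makes $(p^N,\tilxi_r)$ a regular sequence for all $N$ and thereby lets one control ``division by $\tilxi_r$'' $p^N$-adically. (Alternatively, one could try to phrase Step 3 via derived $(p,\tilxi)$-completeness of $A/\tilxi_r$, but this ultimately needs the same regular-sequence input.)
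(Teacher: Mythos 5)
Your proof is correct, but it takes a genuinely different route from the paper's. The paper's argument is a two-line reduction modulo $p$: each $A/I_r$ is $p$-torsion free (by the regularity statements established around the lemma), so both $A$ and $\varprojlim_r A/I_r$ are $p$-complete and $p$-torsion free, reduction mod $p$ commutes with the inverse limit, and modulo $p$ the ideals $I_r$ generate the $\tilxi$-adic topology (since $\tilxi_r\equiv\tilxi^{(p^r-1)/(p-1)}\bmod p$), for which $A/p$ is complete. You instead work integrally: you verify injectivity and surjectivity of $A\to\varprojlim_r A/I_r$ by hand, and correctly isolate the real content as the $(p,\tilxi)$-adic closedness of each $I_r$, which you prove via the regularity of $(p^N,\tilxi_r)$ and the identity $(p,\tilxi_r)^N=\tilxi_r(p,\tilxi_r)^{N-1}+(p^N)$. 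Each step of your Step 3 checks out (in particular, the passage from $(p,\tilxi_r)$ regular to $(p^N,\tilxi_r)$ regular, and the separatedness argument at the end). What the paper's approach buys is brevity, by pushing all the work into $p$-torsion-freeness and the formal compatibility of $(-)/p$ with the limit; what yours buys is a self-contained, explicit verification that does not invoke the interchange of $\varprojlim$ with reduction mod $p$, at the cost of the longer closedness argument. Both rest on the same underlying input, namely the transversality of $(p,\tilxi)$ and its consequences for the $\varphi^j(\tilxi)$.
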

\begin{proof}
    Each $A/I_r$ is $p$-torsion free by \Cref{lemma_p_torsionfree_implies_nonzerodivisor}. Therefore both sides are $p$-complete and $p$-torsion free. Hence, it suffices to check the statement modulo $p$ (note that by $p$-torsion free of each $A/I_r$ modding out $p$ commutes with the inverse limit). But modulo $p$ the topology defined by the ideals $I_r$ is just the $\tilxi$-adic topology and $A/p$ is $\tilxi$-adically complete.
\end{proof}

\begin{lemma}
  \label{lemma_reductions_of_phipowers_of_xi}
  For $r\geq 1$ there is a congruence
$$
\varphi^r(\tilxi)\equiv pu \textrm{ modulo } (\tilxi)
$$
with $u\in A^\times$ some unit.
\end{lemma}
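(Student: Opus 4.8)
The plan is to argue by induction on $r$. Modulo $p$ the map $\varphi$ is the Frobenius, so $\varphi^r(\tilxi)\equiv\tilxi^{p^r}\pmod{p}$ and hence $\varphi^r(\tilxi)\in pA+\tilxi A$ for $r\geq 1$ already for formal reasons; the real content of the lemma is the refinement that after dividing by $p$ one obtains a \emph{unit} modulo $(\tilxi)$, and moreover one represented by an honest unit of $A$.

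For the base case $r=1$ there is nothing to do: since $\tilxi$ is distinguished we have $\delta(\tilxi)\in A^\times$, and $\varphi(\tilxi)=\tilxi^p+p\,\delta(\tilxi)\equiv p\,\delta(\tilxi)\pmod{(\tilxi)}$, so one may take $u=\delta(\tilxi)$. For the inductive step I would peel off a factor of $\varphi$ from the inside, using that $\varphi^r$ is a ring homomorphism fixing $p$: $\varphi^{r+1}(\tilxi)=\varphi^r(\varphi(\tilxi))=\varphi^r(\tilxi^p+p\,\delta(\tilxi))=\varphi^r(\tilxi)^p+p\,\varphi^r(\delta(\tilxi))$. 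Writing $\varphi^r(\tilxi)=pu_r+\tilxi c_r$ by the inductive hypothesis (with $u_r\in A^\times$, $c_r\in A$) and expanding the $p$-th power, every cross term is divisible by $\tilxi$, so $\varphi^r(\tilxi)^p\equiv p^pu_r^p\pmod{(\tilxi)}$; therefore $\varphi^{r+1}(\tilxi)\equiv p\big(p^{p-1}u_r^p+\varphi^r(\delta(\tilxi))\big)\pmod{(\tilxi)}$, and I would set $u_{r+1}:=p^{p-1}u_r^p+\varphi^r(\delta(\tilxi))$.

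It then remains to see that $u_{r+1}$ is actually a unit of $A$, and this is the only point where the transversality hypothesis is used. Here $\varphi^r(\delta(\tilxi))\in A^\times$ because ring homomorphisms preserve units, while $p^{p-1}u_r^p\in pA$ since $p\geq 2$; so it suffices to know $1+pA\subseteq A^\times$. This follows from $A$ being classically $(p,\tilxi)$-adically complete, as recorded just before \Cref{sec:some-results-about-definition-transversal-prism}: for any $a\in A$ the element $pa$ is nilpotent in each $A/(p^n,\tilxi^n)$, so $1+pa$ has a compatible family of inverses there, hence an inverse in $A=\varprojlim_nA/(p^n,\tilxi^n)$. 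Apart from this, the argument is just the two displayed identities together with a binomial expansion, so I do not anticipate any genuine difficulty; the only subtlety is the bookkeeping needed to ensure that the unit is produced in $A$ itself rather than merely in $A/(\tilxi)$, which is why the statement is phrased in terms of $u\in A^\times$.
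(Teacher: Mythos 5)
Your proof is correct and follows essentially the same route as the paper: the base case from distinguishedness of $\tilxi$, the inductive step via $\varphi^{r+1}(\tilxi)=\varphi^r(\tilxi)^p+p\,\varphi^r(\delta(\tilxi))$, and the observation that a unit plus an element of $pA$ is again a unit. The only (harmless) difference is that you attribute $1+pA\subseteq A^\times$ to transversality, whereas it already follows from the $(p,I)$-completeness built into the definition of a prism; the paper leaves this final unit check implicit.
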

\begin{proof}
  For $r=1$ this follows from
  $$
  \varphi(\tilxi)=\tilxi^p+p\delta(\xi)
$$
because by definition of distinguishedness the element
$\delta(\xi)\in A^\times$ is a unit.  For $r\geq 2$ we compute
$$
\varphi^r(\tilxi)=\varphi^{r-1}(\tilxi^p+p\delta(\tilxi))=\varphi^{r-1}(\tilxi)^p+p\varphi^{r-1}(\delta(\tilxi)).
$$
By induction we may write $\varphi^{r-1}(\tilxi)=pu+a\tilxi$
with $u\in A^\times$ some unit and thus modulo $\tilxi$ we
calculate
$$
\varphi^r(\tilxi)\equiv(pu)^p+p\varphi(\delta(\tilxi))=p(\varphi(\delta(\tilxi))+p^{p-1}u^p)
$$
with $\varphi(\delta(\tilxi))+p^{p-1}u^p\in A^\times$ some unit.
\end{proof}

\begin{lemma}
  \label{lemma_transversal_implies_regular_sequence}
  For all $r\geq 1$ the sequences
  $(\varphi^r(\tilxi),\tilxi)$ and
  $(\tilxi,\varphi^r(\tilxi))$ are again regular.  Moreover,
  $I_r=\bigcap\limits_{i=0}^{r-1}\varphi^i(I)$ for all $r\geq 1$.
\end{lemma}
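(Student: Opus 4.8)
The plan is to deduce all three assertions from one congruence, together with the non-zero-divisor properties already at our disposal. Applying the ring homomorphism $\varphi^i$ to \Cref{lemma_reductions_of_phipowers_of_xi} (used with exponent $j-i$) yields, for all $0\le i<j$, a congruence
$$
\varphi^j(\tilxi)\equiv p\,u \pmod{\varphi^i(\tilxi)},\qquad u\in A^\times.
$$
The second ingredient is that $p$ is a non-zero-divisor on $A/\varphi^i(\tilxi)$ for every $i\ge 0$: for $i\ge 1$ this is part of the regularity of $(\varphi^i(\tilxi),p)$ in \Cref{lemma_p_torsionfree_implies_nonzerodivisor}, and for $i=0$ it follows from the regularity of $(p,\tilxi)$ together with $\tilxi$ being a non-zero-divisor on $A$ (if $pa=\tilxi b$ then $\tilxi b\equiv 0\bmod p$, so $b=pb'$, and cancelling $p$ gives $a=\tilxi b'$).

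With this I would prove the stronger statement that $(\varphi^a(\tilxi),\varphi^b(\tilxi))$ is a regular sequence for all $a\ne b$; the two sequences in the statement are the cases $(a,b)=(r,0)$ and $(a,b)=(0,r)$. Both entries are non-zero-divisors on $A$ by \Cref{lemma_p_torsionfree_implies_nonzerodivisor} and the distinguishedness of $\tilxi$, so it remains to see that $\varphi^b(\tilxi)$ is a non-zero-divisor on $A/\varphi^a(\tilxi)$. If $b>a$, the displayed congruence shows that multiplication by $\varphi^b(\tilxi)$ on $A/\varphi^a(\tilxi)$ agrees with multiplication by $pu$, which is a non-zero-divisor there. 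If $b<a$, write $\varphi^a(\tilxi)=pu+\varphi^b(\tilxi)c$; then an equality $\varphi^b(\tilxi)x=\varphi^a(\tilxi)y$ gives $\varphi^b(\tilxi)(x-cy)=puy$, so $py\in(\varphi^b(\tilxi))$, hence $y\in(\varphi^b(\tilxi))$ as $p$ is a non-zero-divisor on $A/\varphi^b(\tilxi)$, and cancelling $\varphi^b(\tilxi)$ yields $x\in(\varphi^a(\tilxi))$.

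For the identity $I_r=\bigcap_{i=0}^{r-1}\varphi^i(I)$, the inclusion $\subseteq$ is clear since $\tilxi_r=\tilxi\varphi(\tilxi)\cdots\varphi^{r-1}(\tilxi)$ is divisible by each $\varphi^i(\tilxi)$. For the reverse inclusion I would take $x\in\bigcap_{i=0}^{r-1}(\varphi^i(\tilxi))$ and peel off the factors of $\tilxi_r$ one at a time: writing $x=\tilxi x^{(1)}$ and using that $\varphi^j(\tilxi)$ is a non-zero-divisor on $A/\tilxi$ for $j\ge 1$ (previous step) forces $x^{(1)}\in\bigcap_{j=1}^{r-1}(\varphi^j(\tilxi))$; writing $x^{(1)}=\varphi(\tilxi)x^{(2)}$ and using that $\varphi^j(\tilxi)$ is a non-zero-divisor on $A/\varphi(\tilxi)$ for $j\ge 2$ forces $x^{(2)}\in\bigcap_{j=2}^{r-1}(\varphi^j(\tilxi))$; iterating, after $r$ steps one gets $x=\tilxi_r x^{(r)}\in(\tilxi_r)=I_r$.

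I do not anticipate a real obstacle; the one point deserving attention is that the peeling argument needs the asymmetric fact that $\varphi^j(\tilxi)$ is a non-zero-divisor on $A/\varphi^i(\tilxi)$ for all $j>i$, not merely when $i=0$. This is why it is cleanest to establish regularity for all ordered pairs $(\varphi^a(\tilxi),\varphi^b(\tilxi))$ with $a\ne b$ at once, rather than an induction that only sees the two sequences from the statement — such an induction does not close.
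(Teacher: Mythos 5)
Your argument is correct, and its core inputs are exactly the paper's: the congruence $\varphi^j(\tilxi)\equiv pu \bmod \varphi^i(\tilxi)$ for $i<j$ (obtained by applying $\varphi^i$ to \Cref{lemma_reductions_of_phipowers_of_xi}) and the fact that $p$ is a non-zero divisor modulo each $\varphi^i(\tilxi)$. The route differs in two places. First, for the reversed sequence $(\varphi^r(\tilxi),\tilxi)$ the paper invokes the permutation lemma for regular sequences in complete rings (\cite[Tag 07DW]{stacks_project}), whereas you give a direct cancellation using $\varphi^a(\tilxi)=pu+\varphi^b(\tilxi)c$; this is more elementary, avoids the completeness input at that step, and yields the stronger statement that \emph{every} ordered pair $(\varphi^a(\tilxi),\varphi^b(\tilxi))$ with $a\neq b$ is regular — a fact the paper also needs (it uses that $\varphi^r(\tilxi)$ is a non-zero divisor modulo $\varphi^i(\tilxi)$ for all $i<r$) but only extracts in one order. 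Second, for $I_r=\bigcap_i\varphi^i(I)$ the paper runs an induction on $r$ via the observation that $(f)\cap(g)=(fg)$ for a regular sequence $(f,g)$, combined with the injectivity of $A/(\tilxi_r)\to\prod_{i<r}A/(\varphi^i(\tilxi))$; your peeling argument is that same induction unrolled, so the content is identical. One presentational caution: in the peeling step you cite ``$\varphi^j(\tilxi)$ is a non-zero divisor on $A/\tilxi$,'' which works if you first write $x=\varphi^j(\tilxi)y_j$ and cancel $\tilxi$ in $A$ at the end; the more direct phrasing would cancel $\varphi^k(\tilxi)$ modulo $\varphi^j(\tilxi)$ (the $b<a$ case). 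Since you have established regularity in both orders, either version closes, and your closing remark correctly identifies why the full asymmetric statement is needed.
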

\begin{proof}
  We can write
  $\varphi(\tilxi)=p\delta(\tilxi)+\tilxi^p$, where
  $\delta(\tilxi)\in A^\times$ is a unit. By
  \Cref{lemma_reductions_of_phipowers_of_xi} we get
  $\varphi^r(\tilxi)\equiv pu$ modulo $(\tilxi)$ with
  $u\in A^\times$ a unit. As $(\tilxi,p)$ is a regular sequence
  we conclude (using \cite[Tag 07DW]{stacks_project} and
  \Cref{lemma_p_torsionfree_implies_nonzerodivisor}) that
  $(\varphi^r(\tilxi),\tilxi)$ is a regular sequence.  To
  prove the last statement we proceed by induction on $r$.  First note
  the following general observation: If $R$ is some ring and $(f,g)$ a
  regular sequence in $R$, then $(f)\cap (g)=(fg)$. In fact, if
  $r=sg\in (f)\cap (g)$, then $sg\equiv 0$ modulo $f$, hence
  $s\equiv 0$ modulo $f$ as desired.  Thus, it suffices to prove that
  $(\tilxi_r,\varphi^r(\tilxi))$ is a regular sequence for
  $r\geq 1$ (recall that
  $\tilxi_r=\tilxi\varphi(\tilxi)\cdots
  \varphi^{r-1}(\tilxi)$). By induction the morphism
  $$
  A/(\tilxi_r)\to
  \prod\limits_{i=0}^{r-1}A/(\varphi^i(\tilxi))
$$
is injective. Hence, it suffices to show that for each
$i=0,\ldots,r-1$ the element $\varphi^r(\tilxi)$ maps to a
non-zero divisor in $A/(\varphi^i(\tilxi))$. But this follows
from \Cref{lemma_reductions_of_phipowers_of_xi} which implies
$\varphi^r(\tilxi)\equiv pu$ modulo $\varphi^i(\tilxi)$ for
some unit $u\in A^\times$.
\end{proof}

We can draw the following corollary.

\begin{lemma}
  \label{lemma_injectivity_for_transversal}
  Define
  $\rho\colon A\to \prod\limits_{r\geq 0} A/\varphi^r(I),\ x\mapsto
  (x\ \mathrm{ mod }\ \varphi^r(I))$. Then $\rho$ is injective.
\end{lemma}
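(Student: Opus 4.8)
The plan is to read the statement off from the two preceding lemmas, so no genuinely new input is required. Note first that an element $x\in A$ lies in $\ker\rho$ precisely when $x\in\varphi^r(I)$ for every $r\ge 0$; in other words $\ker\rho=\bigcap_{r\ge 0}\varphi^r(I)$.

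Next I would fix $n\ge 1$. Since $x\in\varphi^i(I)$ for each $i=0,\dots,n-1$, the last assertion of \Cref{lemma_transversal_implies_regular_sequence}, which identifies $I_n=\bigcap_{i=0}^{n-1}\varphi^i(I)$, shows that $x\in I_n$. As $n$ was arbitrary, $\ker\rho=\bigcap_{n\ge 1}I_n$.

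Finally I would invoke \Cref{lemma_i_r_completeness}: the canonical map $A\to\varprojlim_n A/I_n$ is an isomorphism, hence in particular injective, and its kernel is exactly $\bigcap_{n\ge 1}I_n$. Therefore $\bigcap_{n\ge 1}I_n=0$, forcing $x=0$, which is the desired injectivity of $\rho$.

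I do not expect any real obstacle here: the transversality hypothesis enters only through \Cref{lemma_transversal_implies_regular_sequence} and \Cref{lemma_i_r_completeness}, and the single verification one must perform is the elementary observation that membership in $\varphi^i(I)$ for all $i<n$ is the same as membership in the finite intersection $I_n$. (If one prefers a more self-contained route one can instead argue directly: $x\in\varphi^i(I)=(\varphi^i(\tilxi))$ for all $i$ together with the regularity of the sequences $(\tilxi_n,\varphi^n(\tilxi))$ from \Cref{lemma_transversal_implies_regular_sequence} forces $x\in(\tilxi_n)$ for every $n$, and then $p$-torsion-freeness of $A$ and $\tilxi$-adic completeness of $A/p$ finish the argument exactly as in the proof of \Cref{lemma_i_r_completeness}.)
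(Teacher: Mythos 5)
Your argument is correct and is essentially identical to the paper's own proof: both identify $\ker\rho=\bigcap_{r}\varphi^r(I)=\bigcap_r I_r$ via the last assertion of \Cref{lemma_transversal_implies_regular_sequence} and then conclude that this intersection vanishes from the completeness statement of \Cref{lemma_i_r_completeness}.
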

\begin{proof}
  This follows from \Cref{lemma_i_r_completeness} and
  \Cref{lemma_transversal_implies_regular_sequence} as the kernel of
  $\rho$ is given by
  $\bigcap\limits_{r=1}^\infty
  \varphi^r(I)=\bigcap\limits_{r=1}^\infty I_r=0$.
\end{proof}

We now define the Nygaard filtration of the prism $(A,I)$ (cf.\ \cite[Definition 11.1]{bhatt_scholze_prisms_and_prismatic_cohomology}).

\begin{definition}
  \label{sec:transversal-prisms-definition-nygaard-filatration}
Define
$$
\mathcal{N}^{\geq n}A:=\{ x\in A\ |\ \varphi(x)\in I^nA\},
$$
the $n$-th filtration step of the Nygaard filtration.
\end{definition}

By definition the Frobenius on $A$ induces a morphism
$$
\varphi\colon \mathcal{N}^{\geq 1}A \to I.
$$
Note that we do not divide the Frobenius by $\tilxi$.  Moreover,
we define
$$
\sigma\colon \prod\limits_{i\geq 0} A/\varphi^i(I)\to
\prod\limits_{i\geq 0} A/\varphi^i(I),\ (x_0,x_1,\ldots)\mapsto
(0,\varphi(x_0),\varphi(x_1),\ldots).
$$
Here we use that if $a\equiv b$ mod $\varphi^i(I)$, then
$\varphi(a)\equiv \varphi(b)$ mod $\varphi^{i+1}(I)$ to get that
$\sigma$ is well-defined.  Then the diagram
\begin{equation}
  \label{equation_commutative_square_before_injectivity_statement}
\xymatrix{
  \mathcal{N}^{\geq 1}A \ar[r]^-\rho\ar[d]^\varphi & \prod\limits_{i\geq 0} A/\varphi^i(I)\ar[d]^\sigma \\
  I\ar[r]^-\rho & \prod\limits_{i\geq 0} A/\varphi^i(I) }
\end{equation}
commutes where $\rho$ is the homomorphism from \Cref{lemma_injectivity_for_transversal}.

\begin{lemma}
  \label{lemma_reduction_injective_on_eigenspace}
  The reduction map
  $$
  A^{\varphi=\tilxi}\to A/I,\ x\mapsto x\
  \mathrm{mod}\ (\tilxi)
  $$
  is injective.
\end{lemma}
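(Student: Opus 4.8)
The plan is a Frobenius bootstrap, using the commutative square \eqref{equation_commutative_square_before_injectivity_statement} together with the injectivity of $\rho$ from \Cref{lemma_injectivity_for_transversal}. Let $x\in A$ with $\varphi(x)=\tilxi x$ and $x\in I=(\tilxi)$; the goal is $x=0$. Since $\varphi(x)=\tilxi x\in I$, the element $x$ lies in $\mathcal{N}^{\geq 1}A$, so the square applies to it. As $\rho$ is injective, it suffices to show $\rho(x)=0$, that is, that $\varphi^{r}(\tilxi)$ divides $x$ for every $r\geq 0$.

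I would establish this by induction on $r$. For $r=0$ it is the hypothesis $x\in(\tilxi)$. For the inductive step, write $x=\varphi^{r}(\tilxi)w$ for some $w\in A$; applying $\varphi$ and using the eigenvalue equation gives
$$
\tilxi x=\varphi(x)=\varphi\bigl(\varphi^{r}(\tilxi)w\bigr)=\varphi^{r+1}(\tilxi)\,\varphi(w),
$$
so $\varphi^{r+1}(\tilxi)$ divides $\tilxi x$. By \Cref{lemma_transversal_implies_regular_sequence} the sequence $(\varphi^{r+1}(\tilxi),\tilxi)$ is regular, hence $\tilxi$ is a non-zerodivisor in $A/\varphi^{r+1}(\tilxi)$; therefore $\varphi^{r+1}(\tilxi)$ divides $x$. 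This completes the induction, so $\rho(x)=0$ and $x=0$.

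The same computation is exactly what the square encodes: writing $\rho(x)=(a_0,a_1,\dots)$ with $a_i\in A/\varphi^{i}(I)$, the hypothesis $x\in I$ says $a_0=0$, and comparing the $(r{+}1)$-st coordinates in $\rho(\varphi(x))=\sigma(\rho(x))$ while using $\varphi(x)=\tilxi x$ yields the relation $\tilxi\,a_{r+1}=\varphi(a_r)$ in $A/\varphi^{r+1}(\tilxi)$, so $a_r=0$ forces $a_{r+1}=0$ by the non-zerodivisor property, and the conclusion follows by induction.

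The only substantive ingredient is \Cref{lemma_transversal_implies_regular_sequence}, which makes $\tilxi$ a non-zerodivisor modulo each $\varphi^{s}(\tilxi)$ with $s\geq 1$; granting that (and the injectivity of $\rho$), the argument is a purely formal induction with no further obstacle.
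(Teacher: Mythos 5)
Your proof is correct and is essentially the paper's own argument: the paper also reduces to showing $x\equiv 0$ modulo $\varphi^r(I)$ for all $r$ via the injectivity of $\rho$, and runs the same induction using the relation $\tilxi\,x_{r+1}=\varphi(x_r)$ coming from the commutative square together with the fact that $\tilxi$ is a non-zerodivisor modulo $\varphi^{r+1}(\tilxi)$ (\Cref{lemma_transversal_implies_regular_sequence}). Your "direct divisibility" phrasing and the coordinate-wise phrasing are, as you note yourself, the same computation.
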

\begin{proof}
  Let $x\in A^{\varphi=\tilxi}\cap I$. We
  want to prove that $x=0$. Clearly, $x\in \mathcal{N}^{\geq 1}A$. By \Cref{lemma_injectivity_for_transversal}
  it suffices to prove that
  $$
  x \equiv 0\ \mathrm{mod }\ \varphi^i(I)
$$
for all $i\geq 0$.  Write
$$
\rho(x)=(x_0,x_1,\ldots)
$$
By the commutativity of the square (\Cref{equation_commutative_square_before_injectivity_statement}) we get
$$
\rho(\varphi(x))=\sigma(\rho(x))=(0,\varphi(x_0),\varphi(x_1),\ldots).
$$
As $\varphi(x)=\tilxi x$ and therefore
$\rho(\varphi(x))=\tilxi\rho(x)$ we thus get
$$
(\tilxi x_0,\tilxi x_1,\ldots)=(0,\varphi(x_0),\varphi(x_1),\ldots).
$$
We assumed that $x\in I$, thus $x_0=0\in A/I$. Now we use that
$\tilxi$ is a non-zero divisor modulo $\varphi^i(I)$
(cf. \Cref{lemma_transversal_implies_regular_sequence}) for
$i>0$. Hence, if $x_i=0$, then
$$
0=\varphi(x_i)=\tilxi x_{i+1}\in A/\varphi^{i+1}(I)
$$
implies $x_{i+1}=0$. Beginning with $x_0=0$ this shows that $x_i=0$
for all $i \geq 0$, which implies our claim.
\end{proof}

The same proof shows that also the reduction map
$$
A^{\varphi=\tilxi^n}\to A/I
$$
is injective for $n\geq 1$.

The following corollary is crucially used in \Cref{sec:conclusion-theorem-identification-of-cyclotomic-trace}.

\begin{corollary}
  \label{sec:transversal-prisms-corollary-injectivity-for-reduction-modulo-second-nygaard-filtration}
  The reduction map
  $$
  A^{\varphi=\tilxi}\to \mathcal{N}^{\geq 1}A/\mathcal{N}^{\geq 2}A
  $$
  is injective.
\end{corollary}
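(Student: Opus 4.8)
The plan is to reduce the statement to \Cref{lemma_reduction_injective_on_eigenspace}, which already provides the injectivity of the reduction map $A^{\varphi=\tilxi}\to A/I$. First I would observe that the map in the corollary is well-defined: if $x\in A^{\varphi=\tilxi}$, then $\varphi(x)=\tilxi x\in I$, so $x\in \mathcal{N}^{\geq 1}A$ by the very definition of the Nygaard filtration (\Cref{sec:transversal-prisms-definition-nygaard-filatration}).

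Next I would identify the kernel of $A^{\varphi=\tilxi}\to \mathcal{N}^{\geq 1}A/\mathcal{N}^{\geq 2}A$ with the kernel of $A^{\varphi=\tilxi}\to A/I$. Indeed, suppose $x\in A^{\varphi=\tilxi}$ lies in $\mathcal{N}^{\geq 2}A$, i.e.\ $\varphi(x)\in I^2A=(\tilxi^2)$. Since $\varphi(x)=\tilxi x$, this means $\tilxi x\in(\tilxi^2)$; as the prism is transversal, $\tilxi$ is a non-zero divisor on $A$, so we may cancel it and conclude $x\in(\tilxi)=I$. Conversely $I\supseteq$ the image is clear, so the two kernels coincide. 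Now \Cref{lemma_reduction_injective_on_eigenspace} says this common kernel is trivial, which finishes the argument.

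I do not expect any genuine obstacle here: the corollary is an essentially formal consequence of \Cref{lemma_reduction_injective_on_eigenspace}, the only additional inputs being the definition of the Nygaard filtration and the fact that $\tilxi$ is a non-zero divisor. The one point worth stating explicitly is the observation that, for a $\varphi$-eigenvector with eigenvalue $\tilxi$, the conditions ``$x\in\mathcal{N}^{\geq 2}A$'' and ``$x\in I$'' are equivalent; once this is noted the proof is immediate.
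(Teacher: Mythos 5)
Your proposal is correct and follows exactly the paper's own argument: from $\varphi(x)=\tilxi x\in(\tilxi^2)$ one cancels the non-zero divisor $\tilxi$ to get $x\in I$, and then invokes \Cref{lemma_reduction_injective_on_eigenspace}. The extra remarks on well-definedness and the equivalence of ``$x\in\mathcal{N}^{\geq 2}A$'' with ``$x\in I$'' on the eigenspace are fine but add nothing beyond the paper's proof.
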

\begin{proof}
  Let $x\in A^{\varphi=\tilxi}\cap \mathcal{N}^{\geq 2} A$. Then
  $$
  \tilxi x=\varphi(x)=\tilxi^{2}y
  $$
  for some $y\in A$. As $\tilxi$ is a non-zero divisor in $A$ we get $x\in I=(\tilxi)$. But then $x=0$ by \Cref{lemma_reduction_injective_on_eigenspace}.  
\end{proof}

  Similarly, for each $n\geq 0$ the morphism
  \begin{equation}
    \label{eq:1}
  A^{\varphi=\tilxi^n}\to \mathcal{N}^{\geq i}A/\mathcal{N}^{\geq i+1}A  
  \end{equation}
  is injective.
  Let $R$ be a quasi-regular semiperfectoid ring (cf.\ \cite[Definition 4.19]{bhatt_morrow_scholze_topological_hochschild_homology}) which is $p$-torsion free. In this case,
  $$
  A:=\widehat{\prism}_R
  $$
  is transversal and (\Cref{eq:1}) implies that for $i\geq 0$ 
  $$
  \pi_{2i}(\TC(R))\to \pi_{2i}(\THH(R))
  $$
  is injective (cf.\ \cite[Theorem 1.12]{bhatt_morrow_scholze_topological_hochschild_homology}). We ignore if there exists a direct topological proof. Note that the $p$-torsion freeness is necessary. Indeed, by \cite[Remark 7.20]{bhatt_morrow_scholze_topological_hochschild_homology} $\pi_{2i}(\TC(R))$ is always $p$-torsion free.

\section{The $q$-logarithm}
\label{section-q-logarithm}

In this section we recall the definition of the $q$-logarithm and prove some properties of it. \blue{Several statements in $q$-mathematics that we use are probably standard; cf. e.g. \cite{scholze_canonical_q_deformations_in_arithmetic_geometry} for more on $q$-mathematics.} Recall that the $q$-analog of the integer $n\in \Z$ is defined
to be
$$
[n]_q:=\frac{q^n-1}{q-1}\in \Z[q^{\pm 1}].
$$
If $n\geq 1$, then we can rewrite
$$
[n]_q=1+q+\ldots+ q^{n-1}
$$
and then the $q$-number actually lies in $\Z[q]$. For $n\geq 0$ we
moreover get the relation
\begin{equation}
  \label{eq:2-negative-q-numbers}
[-n]_q=\frac{q^{-n}-1}{q-1}=q^{-n}\frac{1-q^n}{q-1}=-q^{-n}[n]_q.  
\end{equation}
The $q$-numbers satisfy some basic relations, for example
\begin{equation}
  \label{eq:1-addition-for-q-numbers}
[n+k]_q=q^k[n]_q+[k]_q  
\end{equation}
for $n,k\in \Z$, or
$$
[m]_q=\frac{(q^n)^k-1}{q^n-1}\frac{q^n-1}{q-1}=\frac{(q^n)^k-1}{q^n-1}[n]_q,
$$
if $n|m$.
As further examples of $q$-analogs let us define the $q$-factorial for $n\geq 1$ as
$$
[n]_q!:=[1]_q\cdot [2]_q\cdot\ldots \cdot [n]_q\in \Z[q]
$$
(with the convention that $[0]_q!:=1$) and, for $0\leq k\leq n$, the
$q$-binomial coefficient as
$$
\binom{n}{k}_q:=\frac{[n]_q!}{[k]_q![n-k]_q!}.
$$

\begin{lemma}
  \label{lemma_q_binomial_is_integral}
  \begin{itemize}
  \item[1)] For $0\leq k\leq n$ the $q$-binomial
    $\binom{n}{k}_q\in \Z[q]$.
  \item[2)] For $1\leq k\leq n$ the analog
  $$
  \binom{n}{k}_q=q^k\binom{n-1}{k}_q+\binom{n-1}{k-1}_q
  $$
  of Pascal's identity holds.
\end{itemize}
\end{lemma}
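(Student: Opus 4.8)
The plan is to establish part 2) first, by a direct manipulation of the closed formula for the $q$-binomial coefficient, and then to deduce part 1) from it by induction on $n$.

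For 2), I would fix $1\leq k\leq n$ and, in the boundary case $k=n$, adopt the usual convention that $\binom{n-1}{n}_q=0$ (the lower index exceeding the upper one). If $k=n$ the claimed identity reads $1=q^n\cdot 0+\binom{n-1}{n-1}_q=1$, so there is nothing to check; assume henceforth $1\leq k\leq n-1$, so that all the $q$-factorials below are defined. Starting from the right-hand side and pulling out the common factor $[n-1]_q!/([k]_q!\,[n-k]_q!)$, one obtains
$$
q^k\binom{n-1}{k}_q+\binom{n-1}{k-1}_q=\frac{[n-1]_q!}{[k]_q!\,[n-k]_q!}\bigl(q^k[n-k]_q+[k]_q\bigr).
$$
Now $q^k[n-k]_q+[k]_q=[n]_q$ by \eqref{eq:1-addition-for-q-numbers}, applied with the roles of the symbols $n$ and $k$ there played by $n-k$ and $k$; hence the right-hand side equals $[n]_q!/([k]_q!\,[n-k]_q!)=\binom{n}{k}_q$, which is 2).

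For 1), I would argue by induction on $n$, the case $n=0$ being trivial since the only $q$-binomial coefficient is $\binom{0}{0}_q=1\in\Z[q]$. Assuming $\binom{n-1}{j}_q\in\Z[q]$ for all $0\leq j\leq n-1$, take $0\leq k\leq n$. If $k=0$ or $k=n$ then $\binom{n}{k}_q=1\in\Z[q]$ directly from the definition (using $[0]_q!=1$); and if $1\leq k\leq n-1$, Pascal's identity from part 2) writes $\binom{n}{k}_q=q^k\binom{n-1}{k}_q+\binom{n-1}{k-1}_q$ as a $\Z[q]$-linear combination of two $q$-binomial coefficients with upper index $n-1$, both of which lie in $\Z[q]$ by the inductive hypothesis. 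Thus $\binom{n}{k}_q\in\Z[q]$, completing the induction.

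There is no serious obstacle here: the only points requiring a bit of care are the bookkeeping of the boundary conventions, so that Pascal's identity is correctly interpreted at $k\in\{0,n\}$, and the observation that the relevant special case of the $q$-number addition formula \eqref{eq:1-addition-for-q-numbers} is precisely what collapses the right-hand side of Pascal's identity to $\binom{n}{k}_q$. As a sanity check one may recall that $\binom{n}{k}_q$ is the generating polynomial $\sum_\lambda q^{|\lambda|}$ over partitions $\lambda$ fitting into a $k\times(n-k)$ box, which makes both statements transparent, but we will not need this interpretation.
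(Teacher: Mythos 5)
Your proposal is correct and follows essentially the same route as the paper: Pascal's identity is verified by factoring out a common $q$-factorial ratio and collapsing $q^k[n-k]_q+[k]_q$ to $[n]_q$ via the addition rule for $q$-numbers, and part 1) then follows by induction on $n$. Your explicit treatment of the boundary case $k=n$ (via the convention $\binom{n-1}{n}_q=0$) is a small extra care the paper's computation glosses over, but it is not a substantive difference.
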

\begin{proof}
  1) follows from 2) using induction and the easy case
  $\binom{n}{0}_q=1$. Then 2) can be proved as follows: Let
  $1\leq k\leq n$, then
  $$
  \begin{matrix}
    q^k\binom{n-1}{k}_q+\binom{n-1}{k-1}_q & = & \frac{[n-1]_q!}{[k-1]_q![n-1-k]_q!}(\frac{q^k}{[k]_q}+\frac{1}{[n-k]_q}) \\
    & = & \frac{[n-1]_q!}{[k-1]_q![n-1-k]_q!}(\frac{q^k[n-k]_q+[k]_q}{[k]_q[n-k]_q} \\
    & = & \frac{[n-1]_q!}{[k-1]_q![n-1-k]_q!}\frac{[n]_q}{[k]_q[n-k]_q} \\\
    & = & \binom{n}{k}_q
  \end{matrix}
  $$
  using the addition rule (\Cref{eq:1-addition-for-q-numbers}).
\end{proof}

Let us define a generalized $q$-Pochhammer symbol by
$$
(x,y;q)_n:=(x+y)(x+yq)\ldots (x+yq^{n-1})\in \Z[q^{\pm 1} ,x,y]
$$
for $n\geq 1$
(setting $x=1$ and $y:=-a$ recovers the known $q$-Pochhammer symbol
$$
(a;q)_n=(1-a)(1-aq)\ldots(1-aq^{n-1})=(1,-a;q)_n).
$$
Moreover we make the convention
$$
(x,y;q)_0:=1.
$$
In the $q$-world the generalized $q$-Pochhammer symbol replaces the polynomial
$$
(x+y)^n.
$$
For example one can show (using \Cref{lemma_q_binomial_is_integral}) the following $q$-binomial formula

\begin{equation}
  \label{eq:3-q-binomial-theorem}
  (x,y;q)_n=\sum\limits_{k=0}^n q^{k(k-1)/2}\binom{n}{k}_qx^{n-k}y^k.
\end{equation}
  
Let us now come to $q$-derivations.  We recall that the $q$-derivative
$\nabla_qf$ of some polynomial $f\in \Z[q^{\pm 1}][x^{\pm 1}]$ is defined by
$$
\nabla_qf(x):=\frac{f(qx)-f(x)}{qx-x}\in \Z[q^{\pm 1}][x^{\pm 1}].
$$
Thus for example, if $f(x)=x^n$, $n\in\Z$, then we can calculate
$$
\nabla_q(x^n)=\frac{q^nx^n-qx}{qx-x}=\frac{q^n-1}{q-1}x^{n-1}=[n]_qx^{n-1}.
$$
The $q$-derivative satisfies an analog of the Leibniz rule, namely
$$
\nabla_q(f(x)g(x))=\nabla_q(f(x))g(qx)+f(x)\nabla_q(g(x)).
$$

Similarly to the classical rule
$$
\nabla_x((x+y)^n)=n\nabla_x((x+y)^{n-1})
$$
we obtain the following relation for the generalized $q$-Pochhammer symbol.

\begin{lemma}
  \label{lemma_q_derivative_for_formal_powers}
  Let $\nabla_q:=\nabla_{q,x}$ denote the $q$-derivative with respect to $x$. Then the formula
  $$
  \nabla_{q}((x,y;q)_n)=[n]_q(x,y;q)_{n-1}
  $$
  holds in $\Z[q^{\pm 1}][x^{\pm 1},y^{\pm 1}]$.
\end{lemma}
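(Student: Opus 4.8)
The plan is to induct on $n$, using the $q$-Leibniz rule recorded above together with two elementary identities: the factorization
\[
(x,y;q)_n = (x,y;q)_{n-1}\cdot (x+yq^{n-1})
\]
valid for $n\geq 1$, and $[n]_q = q[n-1]_q + 1$, which is the special case $k=1$ of the addition rule \eqref{eq:1-addition-for-q-numbers}. The base case $n=1$ is the direct computation
\[
\nabla_q(x+y) = \frac{(qx+y)-(x+y)}{qx-x} = 1 = [1]_q = [1]_q\,(x,y;q)_0 .
\]

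For the inductive step, fix $n\geq 2$ and write $(x,y;q)_n = f(x)g(x)$ with $f(x) = (x,y;q)_{n-1}$ and $g(x) = x+yq^{n-1}$. Then $g(qx) = qx+yq^{n-1}$ and $\nabla_q g = 1$, so the $q$-Leibniz rule together with the inductive hypothesis $\nabla_q f = [n-1]_q\,(x,y;q)_{n-2}$ gives
\[
\nabla_q\big((x,y;q)_n\big) = [n-1]_q\,(x,y;q)_{n-2}\,(qx+yq^{n-1}) + (x,y;q)_{n-1} .
\]
The key manipulation is then to pull a $q$ out of the last factor of the first summand: since $(x,y;q)_{n-2}\,(qx+yq^{n-1}) = q\,(x,y;q)_{n-2}\,(x+yq^{n-2}) = q\,(x,y;q)_{n-1}$ by the factorization identity, the right-hand side collapses to $\big(q[n-1]_q + 1\big)(x,y;q)_{n-1} = [n]_q\,(x,y;q)_{n-1}$, which is exactly the claim for $n$.

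I do not expect a genuine obstacle here: the whole argument is a formal computation in the integral domain $\Z[q^{\pm 1}][x^{\pm 1},y^{\pm 1}]$, and the only real idea is to isolate the final factor $x+yq^{n-1}$, whose $q$-derivative is $1$, so that the Leibniz rule produces precisely the two terms that recombine via $[n]_q = q[n-1]_q + 1$. As an alternative one could instead apply $\nabla_q$ term by term to the $q$-binomial expansion \eqref{eq:3-q-binomial-theorem}, using $\nabla_q(x^{n-k}) = [n-k]_q\,x^{n-k-1}$ and the identity $\binom{n}{k}_q[n-k]_q = [n]_q\binom{n-1}{k}_q$ (immediate from the definition of the $q$-factorial), but the inductive proof above is shorter.
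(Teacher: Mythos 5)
Your proof is correct and is essentially identical to the paper's: both argue by induction on $n$, peel off the last factor $x+yq^{n-1}$, apply the $q$-Leibniz rule, and recombine the two resulting terms via $[n]_q = q[n-1]_q+1$.
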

\begin{proof}
  We proceed by induction on $n$. Let $n=1$. Then $(x,y;q)_n=x+y$
  and
  $$
  \nabla_q((x+y))=1.
  $$
  Now let $n\geq 2$. We calculate using induction
  $$
  \begin{matrix}
    \nabla_q((x,y;q)_n)& = & \nabla_q((x,y;q)_{n-1}(x+yq^{n-1})) \\
    & = & (x,y,q)_{n-1}\nabla_q(x+yq^{n-1})+(qx+q^{n-1}y)\nabla_q((x,y;q)_{n-1}) \\
    & = & (x,y;q)_{n-1}\cdot 1 + q(x+q^{n-2}y)[n-1]_q(x,y;q)_{n-2}\\
    & = & (1+q[n-1]_q)(x,y;q)_{n-1}\\
    & = & [n]_q (x,y;q)_{n-1}
  \end{matrix}
  $$
  where we used the $q$-Leibniz rule and (\Cref{eq:1-addition-for-q-numbers}).
\end{proof}

Similarly as the polynomials
$$
1, x-1, \frac{(x-1)^2}{2!},\ldots, \frac{(x-1)^n}{n!},\ldots
$$
are useful for developing some function into a Taylor series around
$x=1$ (because the derivative of one polynomial is the previous one)
the $q$-polynomials
$$
1,(x,-1;q)_1,\frac{(x,-1;q)_2}{[2]_q!},\ldots,
\frac{(x,-1;q)_n}{[n]_q!},\ldots
$$
are useful for developing a $q$-polynomial into some ``$q$-Taylor
series'' around $x=1$.  However, for this to make sense we have to pass to suitable completions and localize at $\{[n]_q\}_{n\geq 1}$. Let us be more precise about this. The $(q-1,x-1)$-completion $\Z[[q-1,x-1]]$ of $\Z[q,x]$ contains expressions of the form
$$
\sum\limits_{n=0}^\infty a_n(x,-1;q)_n
$$
with $a_n\in \Z[[q-1]]$ because
$$
(x,-1;q)_n=(x-1)(x-1+1-q)\ldots (x-1+(1-q)\frac{1-q^{n-1}}{1-q})\in (q-1,x-1)^n.
$$
Finally, the next calculations will take place in the ring\footnote{Note that inverting $[n]_q$ for $n\geq 0$ and then $q-1$-adically completing is the same as inverting $n$ for $n\geq 0$ and then $q-1$-adically completing.}
$$
\Q[[q-1,x-1]]\cong \Z[[q-1,x-1]][1/[n]_q|n\geq 1]^{\wedge}_{(q-1,x-1)}
$$
because
$$
\frac{(x,-1;q)_n}{[n]_q!}\in (q-1,x-1)_{\Q[[q-1,x-1]]}.
$$
The ring $\Q[[q-1,x-1]]$ admits a surjection to
$$
\Q[[q-1,x-1]]\to \Q[[x-1]]
$$
with kernel generated by $q-1$.
Similarly, there is a morphism
$$
\mathrm{ev}_1\colon \Q[[q-1,x-1]]\to \Q[[q-1]]
$$
with kernel generated by $x-1$.
Finally, the $q$-derivative $\nabla_q$ extends to a $q$-derivation on $\Q[[q-1,x-1]]$ and it induces the usual derivative after modding out $q-1$.
We denote by $\nabla_q^n$ the $n$-fold decomposition of $\nabla_q$ and by
$$
f(x)_{|x=1}:=\mathrm{ev}_1(f(x))
$$
the evaluation at $x=1$ of an element $f\in \Q[[q-1,x-1]]$.

\begin{lemma}
  \label{lemma_function_zero_if_q_derivatives_vanish}
  Let $f(x)\in \Q[[q-1,x-1]]$. If $\nabla_q^n(f(x))_{|x=1}=0$ for
  all $n\geq 0$, then $f(x)=0$.
\end{lemma}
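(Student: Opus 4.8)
The plan is to prove the injectivity of the map $f\mapsto(\nabla_q^n(f)_{|x=1})_{n\geq0}$ by successive approximation modulo powers of $q-1$, reducing everything to the classical fact that a formal power series in $x-1$ all of whose derivatives at $x=1$ vanish is zero.

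First I would record two bookkeeping facts. Since the quantity $q-1$ does not involve $x$, the operator $\nabla_q$ is $\Q[[q-1]]$-linear and continuous: for $c\in\Q[[q-1]]$ one has $\nabla_q(cg)=\frac{c\,g(qx)-c\,g(x)}{(q-1)x}=c\,\nabla_q(g)$; likewise $\mathrm{ev}_1$ is a $\Q[[q-1]]$-algebra homomorphism. In particular $\nabla_q$ carries the ideal $(q-1)$ into itself, so it descends to $\Q[[q-1,x-1]]/(q-1)\cong\Q[[x-1]]$, where, as recalled before the statement, it becomes the ordinary derivative $d/dx$. Moreover the two composites $\Q[[q-1,x-1]]\xrightarrow{\mathrm{ev}_1}\Q[[q-1]]\to\Q$ and $\Q[[q-1,x-1]]\to\Q[[x-1]]\xrightarrow{x\mapsto1}\Q$ agree (both send $g$ to $g(1,1)$).

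Now suppose $\nabla_q^n(f)_{|x=1}=0$ for all $n\geq0$, and let $\bar f\in\Q[[x-1]]$ be the reduction of $f$ modulo $q-1$. Since $\nabla_q$ descends to $d/dx$, the reduction of $\nabla_q^n(f)_{|x=1}$ modulo $q-1$ equals $\frac{d^n}{dx^n}\bar f_{\,|x=1}$ by the compatibility above; hence all Taylor coefficients of $\bar f$ at $x=1$ vanish, so $\bar f=0$, i.e.\ $f=(q-1)f_1$ for a unique $f_1\in\Q[[q-1,x-1]]$ (uniqueness since $q-1$ is a non-zero-divisor). Applying $\nabla_q^n(\,\cdot\,)_{|x=1}$ and using $\Q[[q-1]]$-linearity gives $0=(q-1)\,\nabla_q^n(f_1)_{|x=1}$ in $\Q[[q-1]]$, whence $\nabla_q^n(f_1)_{|x=1}=0$ for all $n$, again because $q-1$ is a non-zero-divisor. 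Thus $f_1$ satisfies the hypothesis of the lemma. Iterating, $f\in(q-1)^N\Q[[q-1,x-1]]$ for every $N$, and since a two-variable formal power series divisible by arbitrarily high powers of $q-1$ is zero, we conclude $f=0$.

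I do not expect a serious obstacle here; the only point requiring care is the bookkeeping of the first paragraph, namely that $\nabla_q$ is $\Q[[q-1]]$-linear and is compatible, via reduction modulo $q-1$, both with the ordinary derivative and with $\mathrm{ev}_1$. (Alternatively, one could expand $f$ in the ``$q$-Taylor basis'' $e_n:=(x,-1;q)_n/[n]_q!$, on which \Cref{lemma_q_derivative_for_formal_powers} shows that $\nabla_q$ acts as the shift $e_n\mapsto e_{n-1}$ while $\mathrm{ev}_1(e_n)=\delta_{n,0}$, so that $\nabla_q^k\!\left(\sum a_n e_n\right)_{|x=1}=a_k$ and the statement is immediate; but proving that every element of $\Q[[q-1,x-1]]$ admits such an expansion needs a triangularity argument over $\Q[[q-1]]$ and is longer than the route above.)
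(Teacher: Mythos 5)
Your proof is correct and follows the same route as the paper's: reduce modulo $q-1$, use that $\nabla_q$ descends to the ordinary derivative to conclude $q-1$ divides $f$, iterate, and invoke $(q-1)$-adic separatedness. You merely spell out the $\Q[[q-1]]$-linearity and the compatibility of evaluations more explicitly than the paper does.
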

\begin{proof}
  As $\nabla_q$ reduces to the usual derivative modulo $q-1$, we see
  that $f$ must be divisible by $q-1$, i.e., we can write
  $f(x)=(q-1)g(x)$ with $g(x)\in \Q[[q-1,x-1]]$. But then
  $\nabla_q^n(g(x))_{x=1}=0$ for all $n\geq 0$ and we can conclude as
  before that $q-1|g(x)$ which in the end implies
  $$
  f(x)\in \bigcap\limits_{k=1}^\infty (q-1)^k=\{0\}
  $$
  because $\Q[[q-1,x-1]]$ is $(q-1)$-adically separated.
\end{proof}

Now we can state the $q$-Taylor expansion around $x=1$ for elements in $\Q[[q-1,x-1]]$.

\begin{proposition}
  \label{proposition_q_taylor_expansion}
  For any $f(x)\in \Q[[q-1,x-1]]$ there is the Taylor expansion
  $$
  f(x)=\sum\limits_{n=0}^\infty \nabla_q^n(f(x))_{|x=1}
  \frac{(x,-1;q)_n}{[n]_q!}.
  $$
\end{proposition}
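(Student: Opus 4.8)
\emph{Strategy.} The plan is to subtract off the claimed series and then invoke \Cref{lemma_function_zero_if_q_derivatives_vanish}. First I would check that the right-hand side really defines an element of $\Q[[q-1,x-1]]$: for each $n$ one has $\nabla_q^n(f(x))_{|x=1}\in \Q[[q-1]]$, and, as already observed before the statement, $\frac{(x,-1;q)_n}{[n]_q!}\in (q-1,x-1)^n\Q[[q-1,x-1]]$; hence the partial sums are Cauchy and converge in the $(q-1,x-1)$-adically complete ring $\Q[[q-1,x-1]]$. Write $T(f)$ for this limit and set $g:=f-T(f)$. By \Cref{lemma_function_zero_if_q_derivatives_vanish} it then suffices to show that $\nabla_q^m(g)_{|x=1}=0$ for all $m\geq 0$, equivalently that $\nabla_q^m(T(f))_{|x=1}=\nabla_q^m(f)_{|x=1}$.

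\emph{Differentiating the series.} Next I would differentiate termwise. Since $\nabla_q$ (the $q$-derivative in $x$) is additive, continuous for the $(q-1,x-1)$-adic topology, and $\Q[[q-1]]$-linear — so each coefficient $\nabla_q^n(f)_{|x=1}$ passes through it — one obtains $\nabla_q^m(T(f))=\sum_{n\geq 0}\nabla_q^n(f)_{|x=1}\,\nabla_q^m\!\big(\tfrac{(x,-1;q)_n}{[n]_q!}\big)$. Iterating \Cref{lemma_q_derivative_for_formal_powers} (with $y=-1$) gives, for $n\geq m$,
$$\nabla_q^m\!\Big(\frac{(x,-1;q)_n}{[n]_q!}\Big)=\frac{[n]_q[n-1]_q\cdots[n-m+1]_q}{[n]_q!}\,(x,-1;q)_{n-m}=\frac{(x,-1;q)_{n-m}}{[n-m]_q!},$$
while for $n<m$ the iteration reaches the constant $(x,-1;q)_0=1$ after $n$ steps and is then annihilated, so that term vanishes. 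Re-indexing by $k=n-m$ yields $\nabla_q^m(T(f))=\sum_{k\geq 0}\nabla_q^{k+m}(f)_{|x=1}\,\frac{(x,-1;q)_k}{[k]_q!}$. Applying the continuous ring homomorphism $\mathrm{ev}_1$ and using that $(x,-1;q)_k$ evaluates at $x=1$ to $0$ for $k\geq 1$ and to $1$ for $k=0$, only the $k=0$ term survives, so $\nabla_q^m(T(f))_{|x=1}=\nabla_q^m(f)_{|x=1}$. Hence $\nabla_q^m(g)_{|x=1}=0$ for every $m$, and \Cref{lemma_function_zero_if_q_derivatives_vanish} gives $g=0$, which is the asserted $q$-Taylor expansion.

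\emph{Main obstacle.} I expect the only real point to be careful about — more bookkeeping than genuine difficulty — to be the continuity of $\nabla_q$ and $\mathrm{ev}_1$ for the $(q-1,x-1)$-adic topology, since this is what licenses the termwise manipulations above. For $\mathrm{ev}_1$ this is immediate, as it carries $(q-1,x-1)^N$ into $(q-1)^N$. For $\nabla_q$ one argues from the explicit formula $\nabla_q h=\frac{h(qx)-h(x)}{(q-1)x}$: the substitution $x\mapsto qx$ is a continuous automorphism because $qx-1\in(q-1,x-1)$, and one checks directly that for $h\in(q-1,x-1)^N$ the difference $h(qx)-h(x)$ is divisible by $(q-1)x$ with quotient in $(q-1,x-1)^{N-1}$, so $\nabla_q$ lowers the $(q-1,x-1)$-adic filtration by at most one.
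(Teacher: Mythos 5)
Your proof is correct and follows exactly the paper's argument: differentiate the series termwise using \Cref{lemma_q_derivative_for_formal_powers}, check that both sides have the same higher $q$-derivatives at $x=1$, and conclude with \Cref{lemma_function_zero_if_q_derivatives_vanish}. The continuity and convergence details you supply are accurate fillings-in of what the paper leaves implicit.
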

\begin{proof}
  Because
  $$
  \nabla_q(\frac{(x,-1;q)_n}{[n]_q!})=\frac{(x,-1;q)_{n-1}}{[n-1]_q!}
  $$
  we can directly calculate that both sides have equal higher
  derivatives at $x=1$. Thus they agree by
  \Cref{lemma_function_zero_if_q_derivatives_vanish}.
\end{proof}

Using this we can in \Cref{lemma_properties_q_logarithm} motivate the following formula for the $q$-logarithm.

\begin{definition}
  \label{definition_q_logarithm}
  We define the $q$-logarithm as
  $$
  \log_q(x):=\sum\limits_{n=1}^\infty
  (-1)^{n-1}q^{-n(n-1)/2}\frac{(x,-1;q)_n}{[n]_q}\in \Q[[q-1,x-1]].
$$

\end{definition}

Note that the element $\log_q(x)$ is contained in a much smaller subring of $\Q[[q-1,x-1]]$, it suffices to adjoin the elements $\frac{(x,-1;q)_n}{[n]_q}$ for $n\geq 0$ to $\Z[q^{\pm 1},x^{\pm 1}]$ and $(x-1)$-adically complete.

In the ring $\Q[[q-1,x-1]]$ the element $x$ is invertible, as
$$
\frac{1}{x}=\frac{1}{1-(1-x)}=1+(1-x)+(1-x)^2+\ldots\ .
$$

The $q$-derivative of the $q$-logarithm is $1/x$, similarly to the usual logarithm. 

\begin{lemma}
  \label{lemma_properties_q_logarithm}
  The $q$-logarithm $\log_q(x)$ is the unique $f(x)\in \Q[[q-1,x-1]]$ satisfying
  $f(1)=0$ and $\nabla_q(f(x))=\frac{1}{x}$.  Moreover,
  $$
  \log_q(x)=\frac{q-1}{\log(q)}\log(x)
  $$
 as elements in $\Q[[q-1,x-1]]$.
  
\end{lemma}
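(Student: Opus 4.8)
The plan is to establish the characterization first and then deduce the closed formula from it by uniqueness. For uniqueness, suppose $f,g\in\Q[[q-1,x-1]]$ both satisfy $f(1)=g(1)=0$ and $\nabla_q(f)=\nabla_q(g)=\tfrac1x$, and set $h:=f-g$. Then $h(1)=0$ and $\nabla_q(h)=0$, so $\nabla_q^n(h)_{|x=1}=0$ for every $n\geq 0$ (directly for $n=0$, and from $\nabla_q(h)=0$ for $n\geq 1$); hence $h=0$ by \Cref{lemma_function_zero_if_q_derivatives_vanish}.

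Next I would verify that $\log_q(x)$ itself satisfies the two conditions. That $\log_q(1)=0$ is immediate from \Cref{definition_q_logarithm}, since $(x,-1;q)_n$ is divisible by $x-1$ for $n\geq 1$. To compute $\nabla_q(\log_q(x))$ I apply $\nabla_q$ term by term (legitimate since $\nabla_q$ acts on $\Q[[q-1,x-1]]$ and $(x,-1;q)_n\in(q-1,x-1)^n$) and use \Cref{lemma_q_derivative_for_formal_powers} specialized at $y=-1$ to get
$$
\nabla_q(\log_q(x))=\sum_{n=1}^\infty(-1)^{n-1}q^{-n(n-1)/2}(x,-1;q)_{n-1}=\sum_{m=0}^\infty(-1)^m q^{-m(m+1)/2}(x,-1;q)_m.
$$
On the other hand, $\nabla_q(x^k)=[k]_q x^{k-1}$ together with $[-k]_q=-q^{-k}[k]_q$ (\Cref{eq:2-negative-q-numbers}) gives by iteration $\nabla_q^n(\tfrac1x)=\big(\prod_{k=1}^n[-k]_q\big)x^{-n-1}=(-1)^n q^{-n(n+1)/2}[n]_q!\,x^{-n-1}$, so $\nabla_q^n(\tfrac1x)_{|x=1}=(-1)^n q^{-n(n+1)/2}[n]_q!$. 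Plugging this into the $q$-Taylor expansion of \Cref{proposition_q_taylor_expansion} yields $\tfrac1x=\sum_{m=0}^\infty(-1)^m q^{-m(m+1)/2}(x,-1;q)_m$, which is exactly the series above. Hence $\nabla_q(\log_q(x))=\tfrac1x$, and by the uniqueness part $\log_q$ is the unique element of $\Q[[q-1,x-1]]$ with this property and vanishing at $x=1$.

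Finally, to obtain the closed formula I would set $h(x):=\frac{q-1}{\log(q)}\log(x)$, with $\log$ the usual formal logarithm. This is well defined: $\log(q)=(q-1)u$ with $u=1-\tfrac{q-1}{2}+\cdots\in\Q[[q-1]]^\times$, so $\frac{q-1}{\log(q)}=u^{-1}\in\Q[[q-1]]$, and $\log(x)\in\Q[[x-1]]$. Clearly $h(1)=0$. Using that the formal exponential and logarithm are mutually inverse isomorphisms between $((q-1,x-1),+)$ and $(1+(q-1,x-1),\times)$, one has $\log(qx)=\log(q)+\log(x)$, hence
$$
\nabla_q(\log(x))=\frac{\log(qx)-\log(x)}{(q-1)x}=\frac{\log(q)}{(q-1)x}=\frac{u}{x},
$$
so $\nabla_q(h(x))=u^{-1}\cdot\tfrac{u}{x}=\tfrac1x$. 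Since also $h(1)=0$, the uniqueness statement forces $h=\log_q$, i.e.\ $\log_q(x)=\frac{q-1}{\log(q)}\log(x)$.

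I do not expect a real obstacle here: everything reduces to \Cref{lemma_function_zero_if_q_derivatives_vanish}, \Cref{lemma_q_derivative_for_formal_powers} and \Cref{proposition_q_taylor_expansion}. The only points demanding a bit of care are the bookkeeping of which operations are valid in the completed ring $\Q[[q-1,x-1]]$ (termwise application of $\nabla_q$, invertibility of $x$, and that $\log(q)$ is divisible by $q-1$ with unit quotient), and the multiplicativity $\log(qx)=\log(q)+\log(x)$ of the formal logarithm, which I would justify via the standard $\exp$/$\log$ group isomorphism on the maximal ideal.
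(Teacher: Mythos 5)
Your proposal is correct and follows essentially the same route as the paper: both verify that $\log_q$ satisfies the two conditions by comparing with the $q$-Taylor expansion of $1/x$ (via the same computation $\nabla_q^n(x^{-1})_{|x=1}=(-1)^nq^{-n(n+1)/2}[n]_q!$), and both deduce the closed formula $\frac{q-1}{\log(q)}\log(x)$ from uniqueness using $\log(qx)=\log(q)+\log(x)$. The only cosmetic difference is that the paper proves uniqueness by solving for the Taylor coefficients of $f$ directly, whereas you subtract two solutions and invoke \Cref{lemma_function_zero_if_q_derivatives_vanish}; these are interchangeable since the Taylor expansion is itself a consequence of that vanishing lemma.
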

\begin{proof}
  That $\log_q(x)$ has $q$-derivative $1/x$ can be checked using \Cref{proposition_q_taylor_expansion} after writing $1/x$ in its $q$-Taylor expansion. Moreover, $\log_q(1)=0$. For the converse pick $f$ as in the statement.
  By \Cref{proposition_q_taylor_expansion} we can write
  $$
  f(x)=\sum\limits_{n=0}^\infty \nabla_q^n(f(x))_{|x=1}
  \frac{(x,-1;q)_n}{[n]_q!}.
  $$
  and thus we have to determine
  $$
  a_n:=\nabla_q^n(f(x))_{|x=1}
  $$
  for $n\geq 0$.  By assumption we must have $a_0=f(1)=0$. Moreover,
  for $n\geq 1$
  $$
  a_n=\nabla_q^n(f(x))_{|x=1}=\nabla^{n-1}_q(x^{-1})_{|x=1}=[-n+1]_q\ldots
  [-1]_q.
  $$
  Using $[-k]_q=-q^{-k}[k]_q$ for $k\in \Z$ the last expression
  simplifies to
  $$
  [-n+1]_q\ldots[-1]_q=(-1)^{n-1}q^{-n(n-1)/2}[n-1]_q!.
  $$
  Thus we can conclude
  $$
  f(x)=\sum\limits_{n=1}^\infty
  (-1)^{n-1}q^{-n(n-1)/2}\frac{(x,-1;q)_n}{[n]_q}=\log_q(x).
  $$
  For the last statement note that
  $$
  f(x):=\frac{q-1}{\log(q)}\log(x)
  $$
  exists in $\Q[[q-1,x-1]]$ (because $n\in R_{q}^\times$ for all $n\geq 1$) and
  satisfies $f(1)=0$. Moreover,
  $$
  \nabla_q(f(x))=\frac{f(qx)-f(x)}{qx-x}=\frac{q-1}{\log(q)}\frac{\log(q)+\log(x)-\log(x)}{(q-1)x}=\frac{1}{x}
  $$
  which implies $f(x)=\log_q(x)$ by the proven uniqueness of the
  $q$-logarithm.
\end{proof}

We now turn to prisms again.
Define
$$
\tilxi:=[p]_q=1+q+\ldots+q^{p-1}
$$
and
$$
\tilxi_r=\tilxi\varphi(\tilxi)\ldots
\varphi^{r-1}(\tilxi)
$$
for $r\geq 1$. Here, $\varphi$ is the Frobenius lift on $\Z[q^{\pm 1}]$
satisfying $\varphi(q)=q^p$.
Then $\tilxi$ is a distinguished element in the prism $\Z_p[[q-1]]$.
The $\tilxi_r$ are again $q$-numbers, namely
$$
\tilxi_r=[p^r]_q.
$$

Let us
recall the following situation from crystalline cohomology. Assume
that $A$ is a $p$-complete ring with an ideal $J\subseteq A$ equipped
with divided powers
$$
\gamma_n\colon J\to J,\ n\geq 1.
$$
In this situation the
logarithm
$$
\log(x):=\sum\limits_{n=1}^\infty (-1)^{n-1}(n-1)!\gamma_n(x-1)
$$
converges in $A$ for every element $x\in 1+J$. We now want
to prove an analogous statement for the $q$-logarithm.  Recall that for a prism $(A,I)$ we defined the Nygaard filtration
$$
\mathcal{N}^{\geq n}A:=\{ x\in A\ |\ \varphi(x)\in I^n \},\ n\geq 0
$$
in \Cref{sec:transversal-prisms-definition-nygaard-filatration}.
From now on, we assume that the prism $(A,I)$ lives over $(\Z_{p}[[q-1]],(\tilxi))$.
The expression
$$
\gamma_{n,q}(x-y):=\frac{(x-y)(x-qy)\cdots (x-q^{n-1}y)}{[n]_q!} \in \Z_p[[q-1]][x,y][1/{[m]_q}|\ m\geq 0]
$$
is called the $n$-th $q$-divided power of $x-y$ (cf.\ \cite[Rem. 1.4]{pridham_qderham})\footnote{This terminology is, however, quite bad. The $q$-divided power depends on the pair $(x,y)$ and not simply their difference $x-y$.}. We will study the
divisibility of
$$
(x-y)(x-qy)\cdots (x-q^{n-1}y)
$$
by
$$
\tilxi,\varphi(\tilxi),\ldots.
$$
The following statement is clear.

\begin{lemma}
  \label{sec:q-logarithm-lemma-varphi-tilde-xi-is-minimal-polynomial-of-root-of-unity}
  For $r\geq 1$ the polynomial (in $q$)
$$
\varphi^{r-1}(\tilxi)=\frac{q^{p^{r}}-1}{q^{p^{r-1}}-1}
$$
is the minimal polynomial of a $p^{r}$-th root of unity
$\zeta_{p^{r}}$, i.e., the morphism
$$
\Z[q]/{(\varphi^{r-1}(\tilxi))}\to \Z[\zeta_{p^{r}}],\ q\mapsto
\zeta_{p^{r}}
$$
is injective.
\end{lemma}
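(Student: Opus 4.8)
The plan is to recognize $\varphi^{r-1}(\tilxi)$ as the $p^{r}$-th cyclotomic polynomial and then invoke its classical irreducibility. First I would record the explicit shape: since $\tilxi=[p]_q=\frac{q^p-1}{q-1}$ and $\varphi$ is the Frobenius lift on $\Z[q^{\pm 1}]$ with $\varphi(q)=q^p$, iterating gives $\varphi^{r-1}(\tilxi)=\frac{(q^{p^{r-1}})^p-1}{q^{p^{r-1}}-1}=\frac{q^{p^r}-1}{q^{p^{r-1}}-1}$, which is exactly the cyclotomic polynomial $\Phi_{p^r}(q)=\Phi_p(q^{p^{r-1}})$, a monic polynomial of degree $p^{r-1}(p-1)$ whose complex roots are precisely the primitive $p^{r}$-th roots of unity. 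In particular $\varphi^{r-1}(\tilxi)$ vanishes at $\zeta_{p^r}$, so the ring map $\Z[q]\to \Z[\zeta_{p^r}]$, $q\mapsto \zeta_{p^r}$, factors through $\Z[q]/(\varphi^{r-1}(\tilxi))$, and the factored map is surjective by construction; it remains to prove injectivity.

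For this I would show that $\Phi_{p^r}$ is irreducible over $\Q$, by the usual Eisenstein argument: substituting $q=t+1$ one computes $\Phi_{p^r}(t+1)\equiv t^{p^{r-1}(p-1)}\pmod p$ (using $(t+1)^{p^k}\equiv t^{p^k}+1\pmod p$) while its constant term is $\Phi_{p^r}(1)=p$, so $\Phi_{p^r}(t+1)$ is Eisenstein at $p$ and hence irreducible over $\Q$; therefore so is $\Phi_{p^r}(q)$. (One may instead simply cite the irreducibility of all cyclotomic polynomials.) Being monic and irreducible over $\Q$, the polynomial $\varphi^{r-1}(\tilxi)=\Phi_{p^r}$ is then the minimal polynomial of $\zeta_{p^r}$, i.e.\ the kernel of $\Q[q]\to\Q(\zeta_{p^r})$, $q\mapsto\zeta_{p^r}$, is $(\Phi_{p^r})$.

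To descend from $\Q$ to $\Z$ I would use monicity: since $\varphi^{r-1}(\tilxi)$ is monic, $\Z[q]/(\varphi^{r-1}(\tilxi))$ is a free $\Z$-module on the images of $1,q,\dots,q^{p^{r-1}(p-1)-1}$, and the natural map $\Z[q]/(\varphi^{r-1}(\tilxi))\to \Q[q]/(\varphi^{r-1}(\tilxi))=\Q(\zeta_{p^r})$ is injective. The composite of this injection with $\Z[q]/(\varphi^{r-1}(\tilxi))\to\Z[\zeta_{p^r}]$ is the map induced by $q\mapsto\zeta_{p^r}$, hence it is injective; therefore $\Z[q]/(\varphi^{r-1}(\tilxi))\to\Z[\zeta_{p^r}]$ is injective as claimed (in fact an isomorphism, though we only need injectivity).

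As for obstacles: there is essentially none beyond bookkeeping — the only genuine input is the classical irreducibility of $\Phi_{p^r}$ over $\Q$ — and the two points needing a moment's care are keeping straight that $\varphi$ here is the Frobenius lift on $\Z[q^{\pm1}]$ (so $\varphi^{r-1}$ is its ordinary $(r-1)$-fold iterate, nothing prismatic), and noting that monicity of $\varphi^{r-1}(\tilxi)$ is precisely what upgrades the injectivity from a statement over $\Q$ to one over $\Z$.
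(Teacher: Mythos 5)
Your proof is correct and is exactly the standard argument the paper implicitly relies on: the paper offers no proof at all, simply declaring the statement clear, since $\varphi^{r-1}(\tilxi)$ is the $p^r$-th cyclotomic polynomial $\Phi_{p^r}(q)=\Phi_p(q^{p^{r-1}})$ and its irreducibility (via Eisenstein at $q=t+1$) is classical. Your extra care in descending from $\Q$ to $\Z$ via monicity is the right way to make the integral injectivity statement precise.
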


Thus reducing modulo $\varphi^{r-1}(\tilxi)$ is the same as setting
$q=\zeta_{p^{r}}$. Moreover, in $\Z[\zeta_{p^{r}}]$ there is the
equality
$$
z^{p^{r}}-1=\prod\limits_{i=0}^{p^{r}-1}(z-\zeta_{p^{r}}^i).
$$
Setting $z=\frac{x}{y}$ one thus arrives at the congruence

\begin{equation}
  \label{eq:congruence-for-q-divided-powers}
  x^{p^{r}}-y^{p^{r}}\equiv (x-y)(x-qy)\cdots (x-q^{p^{r}-1}y)\ \mathrm{ mod }\ \varphi^{r-1}(\tilxi),
\end{equation}
which will be useful.

\begin{lemma}
  \label{sec:q-logarithm-lemma-xi-valuation-of-q-factorial}
  Let $n\geq 1$ and for $r\geq 1$ write $n=a_rp^r+b_r$ with
  $a_r,b_r\geq 0$ and $b_r<p^r$. Then in $\Z_p[[q-1]]$
$$
[n]_q!=u\prod\limits_{r\geq 1}^\infty \varphi^{r-1}(\tilxi)^{a_r}
$$
for some unit $u\in \Z_p[[q-1]]^\times$.
\end{lemma}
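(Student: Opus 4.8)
The plan is to factor each $q$-number into cyclotomic polynomials and then discard the factors that become units in $\Z_p[[q-1]]$. Recall the identity $X^m-1 = \prod_{d\mid m}\Phi_d(X)$ in $\Z[X]$, where $\Phi_d$ is the $d$-th cyclotomic polynomial; dividing by $X-1=\Phi_1(X)$ gives $[m]_q=\prod_{1<d\mid m}\Phi_d(q)$. Multiplying over $m=1,\dots,n$, the factor $\Phi_d(q)$ occurs once for each multiple of $d$ in $\{1,\dots,n\}$, so
$$
[n]_q!=\prod_{d=2}^{n}\Phi_d(q)^{\lfloor n/d\rfloor}
$$
as an identity in $\Z[q]\subseteq\Z_p[[q-1]]$.

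Next I would isolate the prime-power factors. Since $\Phi_{p^r}(q)=\Phi_p(q^{p^{r-1}})=\frac{q^{p^{r}}-1}{q^{p^{r-1}}-1}=\varphi^{r-1}(\tilxi)$ (cf.\ \Cref{sec:q-logarithm-lemma-varphi-tilde-xi-is-minimal-polynomial-of-root-of-unity}), the $d$'s which are powers of $p$ contribute exactly $\prod_{r\geq 1}\varphi^{r-1}(\tilxi)^{\lfloor n/p^{r}\rfloor}$, a finite product since $\lfloor n/p^{r}\rfloor=0$ once $p^{r}>n$. All remaining factors $\Phi_d(q)$, with $1<d\leq n$ not a power of $p$, are units in $\Z_p[[q-1]]$: this ring is local with maximal ideal $(p,q-1)$, so $f\in\Z_p[[q-1]]$ is a unit iff $f(1)\in\Z_p^{\times}$, and for such $d$ one has $\Phi_d(1)=\ell$ if $d=\ell^{k}$ is a power of a prime $\ell\neq p$ and $\Phi_d(1)=1$ if $d$ has at least two distinct prime divisors, so in all cases $\Phi_d(1)\in\Z_p^{\times}$. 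Gathering these into a single unit $u\in\Z_p[[q-1]]^{\times}$ and noting $\lfloor n/p^{r}\rfloor=a_r$ yields the claimed formula.

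The argument is essentially bookkeeping, and the only point needing a word of justification is the evaluation of $\Phi_d$ at $1$; this is classical and can be deduced by evaluating $[m]_q=\prod_{1<d\mid m}\Phi_d(q)$ at $q=1$ (which gives $m=\prod_{1<d\mid m}\Phi_d(1)$) and inducting on the number of prime factors of $m$, or simply cited. I do not expect any real obstacle here: the only mild subtlety is keeping track of which cyclotomic factors survive $p$-adically, which is exactly what the identification $\Phi_{p^{r}}(q)=\varphi^{r-1}(\tilxi)$ takes care of.
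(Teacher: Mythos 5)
Your proof is correct, but it is organized differently from the paper's. The paper argues by induction on $n$: it compares the base-$p$ digit data of $n$ and $n-1$, using only that $[n]_q$ is a unit in $\Z_p[[q-1]]$ when $(n,p)=1$ and that $[n]_q=v\,[p^s]_q=v\,\varphi^{s-1}(\tilxi)\cdots\tilxi$ for a unit $v$ when $p^s\,\|\,n$. You instead give a direct global factorization $[n]_q!=\prod_{d=2}^{n}\Phi_d(q)^{\lfloor n/d\rfloor}$ into cyclotomic polynomials, identify $\Phi_{p^r}(q)=\varphi^{r-1}(\tilxi)$, and dispose of the remaining factors by the unit criterion in the local ring $\Z_p[[q-1]]$ via the classical values $\Phi_d(1)$. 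The underlying arithmetic is the same in both cases -- a Legendre-type count of how often $p$-power divisors occur among $1,\dots,n$, noting that $a_r=\lfloor n/p^r\rfloor$ -- but your version makes the unit part completely explicit (a product of $\Phi_d(q)$ with $d$ not a $p$-power) and avoids the digit bookkeeping of the inductive step, at the cost of invoking the cyclotomic factorization of $X^m-1$ and the evaluation $\Phi_d(1)$, both of which you correctly justify or could cite. All the individual steps you use (the local-ring unit criterion, $\Phi_{p^r}(q)=\varphi^{r-1}(\tilxi)$, $\Phi_d(1)=\ell$ for $d=\ell^k$ and $\Phi_d(1)=1$ otherwise) check out, so there is no gap.
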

\begin{proof}
  We may prove the statement by induction on $n$. Thus let us assume that it
  is true for $m=n-1$ and for $r\geq 1$ write $m=c_rp^r+d_r$ with
  $c_r,d_r\geq 0$ and $d_r<p^r$. If $n$ is prime to $p$, then $[n]_q$
  is a unit in $\Z_p[[q-1]]$ and it suffices to see that the righthand
  side is equal (up to some unit in $\Z_p[[q-1]]$) to
$$
\prod\limits_{r\geq 1}^\infty \varphi^{r-1}(\tilxi)^{c_r}.
$$ 
But $n$ prime to $p$ implies that $b_r>0$ for all $r\geq 1$. Thus
$c_r=a_r$ and $d_r=b_r-1$, which implies that both products are equal.
Now assume that $p$ divides $n$ and write $n=p^sn^\prime$ with
$n^\prime$ prime to $p$. Moreover, write $m=n-1=c_rp^r+d_r$ as
above. Then we can conclude $a_r=c_r$ for $r>s$ while $c_r=a_r-1$
for $1\leq r\leq s$ (as $d_r=p^r-1$ for such $r$). Altogether we
therefore arrive at
$$
\begin{matrix}
  [n]_q!=[n]_q[n-1]_q!\\
  = u^\prime [n]_q\prod\limits_{r\geq 1}^\infty \varphi^{r-1}(\tilxi)^{c_r}\\
  = u^\prime v\prod\limits_{r\geq 1}^\infty
  \varphi^{r-1}(\tilxi)^{a_r}
\end{matrix},
$$
$u^\prime\in \Z_p[[q-1]]^\times$, where we used that
$$
[n]_q=v[p^s]_q=v\varphi^{s-1}(\tilxi)\ldots \tilxi
$$
for some unit $v\in \Z_p[[q-1]]$.
\end{proof}

\begin{proposition}
  \label{sec:q-logarithm-proposition-q-divided-powers-for-rank-1-elements}
  Let $(A,I)$ be a prism over $(\Z_p[[q-1]],(\tilxi))$ and let
  $x,y\in A$ be elements of rank $1$ such that
  ${\varphi(x-y)}=x^p-y^p\in \tilxi A$. Then for all $n\geq 1$ the
  ring $A$ contains a $q$-divided power
$$
\gamma_{n,q}(x-y)=\frac{(x-y)(x-qy)\cdots (x-q^{n-1}y)}{[n]_q!}
$$   
of $x-y$\footnote{By this we mean that there exists an element, called $\gamma_{n,q}(x-y)$, such that $[n]_q!\gamma_{n,q}(x-y)=(x-y)(x-qy)\cdots (x-q^{n-1}y)$. The element $\gamma_{n,q}(x-y)$ need not be unique, but it is if $A$ is $[n]_q$-torsion free for any $n\geq 0$. Note that even in this torsion free case $\gamma_{n,q}(x-y)$ depends on the pair $(x,y)$ and not merely on the difference $x-y$.}. Moreover, $\gamma_{n,q}$ lies in fact in the $n$-th step
$\mathcal{N}^{\geq n} A$ of the Nygaard filtration of $A$.
\end{proposition}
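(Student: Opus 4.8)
The plan is to reduce to the case of a transversal prism (\Cref{sec:some-results-about-definition-transversal-prism}) and then to prove the two assertions — existence of the quotient, and its membership in the Nygaard filtration — separately, in both cases exploiting that in a transversal prism the elements $\varphi^i(\tilxi)$ ($i\geq0$) are non-zero divisors which are congruent to $p$, up to a unit, modulo one another. Write $P_n:=(x-y)(x-qy)\cdots(x-q^{n-1}y)$. For the reduction: since $x,y$ have rank $1$, a standard construction — a prismatic envelope over the $(p,q-1)$-adic completion of a polynomial $\delta$-algebra on two rank-$1$ generators, cf.\ \cite[\S 3]{bhatt_scholze_prisms_and_prismatic_cohomology} — produces a transversal prism $(A^{\mathrm{univ}},(\tilxi))$ over $(\Z_p[[q-1]],(\tilxi))$ with rank-$1$ elements $u,v\in A^{\mathrm{univ}}$ satisfying $u^p-v^p\in\tilxi A^{\mathrm{univ}}$ and a map of prisms $A^{\mathrm{univ}}\to A$ sending $u\mapsto x$, $v\mapsto y$. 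Since both the existence of an element $\gamma_{n,q}(x-y)\in A$ with $[n]_q!\,\gamma_{n,q}(x-y)=P_n$ and its membership in $\mathcal N^{\geq n}A$ are preserved by pushforward along this map, we may assume $(A,I)$ is transversal; in particular $\tilxi$, the $\varphi^i(\tilxi)$, $[n]_q!$ and $\varphi([n]_q!)$ are then non-zero divisors (products of non-zero divisors, by \Cref{lemma_p_torsionfree_implies_nonzerodivisor}), so $\gamma_{n,q}(x-y)$, once shown to exist, is automatically unique.

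For the divisibility, fix $r\geq1$ and write $n=a_rp^r+b_r$ with $0\leq b_r<p^r$. For $0\leq k<a_r$ the block $\prod_{j=kp^r}^{(k+1)p^r-1}(x-q^jy)=\prod_{i=0}^{p^r-1}\bigl(x-q^i(q^{kp^r}y)\bigr)$ is, by the congruence \eqref{eq:congruence-for-q-divided-powers} applied with $y$ replaced by $q^{kp^r}y$, together with \Cref{sec:q-logarithm-lemma-varphi-tilde-xi-is-minimal-polynomial-of-root-of-unity} — which identifies reduction modulo $\varphi^{r-1}(\tilxi)$ with setting $q=\zeta_{p^r}$, so that $q^{kp^{2r}}\equiv1$ — congruent modulo $\varphi^{r-1}(\tilxi)$ to $x^{p^r}-y^{p^r}=\varphi^{r-1}(x^p-y^p)\in\varphi^{r-1}(\tilxi)A$ (using that $x,y$ have rank $1$). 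Hence each block is divisible by $\varphi^{r-1}(\tilxi)$, so the product of the $a_r$ disjoint blocks occurring in $P_n$ is divisible by $\varphi^{r-1}(\tilxi)^{a_r}$, and therefore so is $P_n$, for every $r\geq1$. To pass to $[n]_q!\mid P_n$, observe that applying $\varphi^i$ to \Cref{lemma_reductions_of_phipowers_of_xi} yields $\varphi^{j}(\tilxi)\equiv p\cdot(\text{unit})\pmod{\varphi^{i}(\tilxi)}$ for $j>i$; since $p$ is a non-zero divisor modulo $\varphi^i(\tilxi)$ (\Cref{lemma_p_torsionfree_implies_nonzerodivisor}), a dévissage through the powers of $\varphi^i(\tilxi)$ shows that $\varphi^{j}(\tilxi)$, and any power of it, is a non-zero divisor modulo $\varphi^{i}(\tilxi)^{a}$ whenever $j>i$. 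Writing $[n]_q!=u'\prod_{r\geq1}\varphi^{r-1}(\tilxi)^{a_r}$ with $u'$ a unit (\Cref{sec:q-logarithm-lemma-xi-valuation-of-q-factorial}, a finite product), one removes the factors $\varphi^{r-1}(\tilxi)^{a_r}$ from $P_n$ in decreasing order of $r$: after cancelling the top one (a non-zero divisor), the quotient stays divisible by each lower $\varphi^{r-1}(\tilxi)^{a_r}$ because the cancelled factor is a non-zero divisor modulo it, and induction on the number of non-zero $a_r$ yields $[n]_q!\mid P_n$; this defines $\gamma_{n,q}(x-y)$.

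For the Nygaard membership, apply $\varphi$ to $[n]_q!\,\gamma_{n,q}(x-y)=P_n$; since $x,y$ have rank $1$ this gives $\varphi([n]_q!)\,\varphi(\gamma_{n,q}(x-y))=\prod_{j=0}^{n-1}(x^p-q^{jp}y^p)$. Writing $x^p-y^p=\tilxi\chi$ and $1-q^{jp}=-(q-1)\,[j]_{q^p}\,\tilxi$ in $\Z_p[[q-1]]$ (as $[p]_q$ divides $[jp]_q$), each factor equals $\tilxi\bigl(\chi-(q-1)[j]_{q^p}y^p\bigr)$, so the product lies in $\tilxi^nA$. On the other hand $\varphi([n]_q!)=\varphi(u')\prod_{r\geq1}\varphi^{r}(\tilxi)^{a_r}$ by \Cref{sec:q-logarithm-lemma-xi-valuation-of-q-factorial}, and each $\varphi^r(\tilxi)$ with $r\geq1$ is $\equiv p\cdot(\text{unit})\pmod{\tilxi}$ by \Cref{lemma_reductions_of_phipowers_of_xi}, hence a non-zero divisor modulo $\tilxi$ and, by the same dévissage, modulo $\tilxi^n$. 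Cancelling the non-zero divisor $\varphi([n]_q!)$ from the displayed identity gives $\varphi(\gamma_{n,q}(x-y))\in\tilxi^nA=I^n$, i.e.\ $\gamma_{n,q}(x-y)\in\mathcal N^{\geq n}A$. The main obstacle is the combination step in the divisibility argument: the divisibilities $\varphi^{r-1}(\tilxi)^{a_r}\mid P_n$ do not multiply together formally, because the $\varphi^i(\tilxi)$ share $p$ as a common factor modulo one another — it is precisely there that transversality (via the reduction to $A^{\mathrm{univ}}$) together with \Cref{lemma_reductions_of_phipowers_of_xi} and \Cref{lemma_p_torsionfree_implies_nonzerodivisor} is needed.
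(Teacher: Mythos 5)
Your proposal is correct and follows essentially the same route as the paper: reduce to the universal (hence flat/transversal) case, prove $\varphi^{r-1}(\tilxi)^{a_r}\mid P_n$ block by block via the congruence \eqref{eq:congruence-for-q-divided-powers}, combine using \Cref{sec:q-logarithm-lemma-xi-valuation-of-q-factorial} and the pairwise regularity of the $\varphi^i(\tilxi)$, and get the Nygaard statement by applying $\varphi$ and cancelling $\varphi([n]_q!)$. The only cosmetic differences are that you verify the block congruence by noting $q^{kp^{2r}}\equiv 1$ directly rather than splitting $x^{p^r}-q^k y^{p^r}$ into two summands, and you assemble the divisibilities by sequential cancellation instead of invoking the injectivity of $A/([n]_q!)\to\prod_r A/(\varphi^{r-1}(\tilxi))^{a_r}$ — both rest on the same facts.
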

\begin{proof}
  Replacing $A, x,y$ by the universal case we may assume that $A$ is
  flat over $\Z_p[[q-1]]$. In particular, this implies that
  $\tilxi,\varphi(\tilxi),\ldots$ are pairwise regular sequences (cf.\
  \Cref{lemma_transversal_implies_regular_sequence}).  Fix $n\geq 1$. For $r\geq 1$ we write $n$ as
$$
n=a_r p^r+b^r
$$ 
with $a_r,b_r\geq 0$ and $0\leq b^r<p^r$.  We claim that for each
$r\geq 0$
$$
\varphi^{r-1}(\tilxi)^{a_r}
$$
divides
$$
(x-y)(x-qy)\cdots (x-q^{n-1}y).
$$
This implies the proposition, namely by
\Cref{sec:q-logarithm-lemma-xi-valuation-of-q-factorial} we have
$$
[n]_q!=u\prod\limits_{r\geq 1} \varphi^{r-1}(\tilxi)^{a_r}
$$ 
for some unit $u\in A^\times$ while furthermore the morphism
$$
A/{([n]_q!)}\to \prod\limits_{r\geq 1}
A/{(\varphi^{r-1}(\tilxi))^{a_r}}
$$
is injective by the proof of \Cref{lemma_transversal_implies_regular_sequence}. Thus fix $r\geq 1$. To prove our claim we may
replace $n$ by $n-b_r$ as
$$
(x-y)(x-qy)\cdots (x-q^{n-b_r-1}y)
$$
divides
$$
(x-y)(x-qy)\cdots (x-q^{n-1}y).
$$
Thus let us assume that $n=a_rp^{r}$.  We claim that each of the
following $a_r$ many elements (note that their product is
$(x-y)\cdots(x-q^{n-1}y)$)
$$
\begin{matrix}
  (x-y)(x-qy)\cdots (x-q^{p^{r}-1}y), \\
  (x-q^{p^{r}}y)(x-q^{p^{r}+1}y)\cdots (x-q^{2p^{r}-1}y), \\
  \vdots \\
  (x-q^{(a_r-1)p^{r}}y)(x-q^{(a_r-1)p^r+1}y)\cdots
  (x-q^{a_rp^{r}-1}y),
\end{matrix}
$$
is divisible by $\varphi^{r-1}(\tilxi)$.  For this recall the
congruence (\Cref{eq:congruence-for-q-divided-powers})
$$
x^{p^{r}}-y^{p^{r}}\equiv (x-y)(x-qy)\cdots (x-q^{p^{r}-1}y)\ \mathrm{
  mod }\ \varphi^{r-1}(\tilxi).
$$
Replacing in this congruence $y$ by
$q^{p^{r}}y,\ldots, q^{(a_r-1)p^{r}}y$ shows that each of the above
$a_r$ elements is congruent modulo $\varphi^{r-1}(\tilxi)$ to an
element of the form
$$
x^{p^{r}}-q^{k}y^{p^{r}}
$$
with $k\geq 0$ divisible by $p^{r}$. But we have
$$
x^{p^{r}}-q^{k}y^{p^{r}}=(x^{p^{r}}-y^{p^{r}})+y^{p^{r}}(1-q^{k})
$$
and we claim that under our assumptions both summands are divisible by
$\varphi^{r-1}(\tilxi)$. For the first summand we use that $x,y$ are
of rank $1$ to write
$$
x^{p^{r}}-y^{p^{r}}=\varphi^{r-1}(x^p-y^p)=\varphi^{r-1}(\tilxi)\varphi^{r-1}(\frac{x^p-y^p}{\tilxi})
$$
which makes sense as we assumed that
$$
{x^p-y^p}\in \tilxi A.
$$
For the second summand we note that
$$
1-q^k=\frac{1-q^k}{1-q^{p^{r}}}\varphi^{r-1}(\tilxi)(1-q^{p^{r-1}})
$$
with all factors in $\Z_p[[q-1]]$ as $p^r$ divides $k$.  It remains to
prove that
$$
\gamma_{n,q}(x-y)=\frac{(x-y)(x-qy)\cdots(x-q^{n-1}y)}{[n]_q!}
$$
lies in $\mathcal{N}^{\geq n} A$.  But
$$
\varphi(\gamma_{n,q})=\frac{(x^p-y^p)(x^p-q^py^p)\cdots
  (x^p-q^{p(n-1)}y^p)}{\varphi([n]_q!)}
$$
and as we saw above $\tilxi$ divides each of the $n$ factors
$$
(x^p-y^p),\ (x^p-q^py^p),\ \cdots, (x^p-q^{p(n-1)}y^p).
$$
But
$\tilxi$ and $\varphi([n]_q!)$ form a regular sequence by \Cref{lemma_transversal_implies_regular_sequence} which
implies that
$$
(x^p-y^p)(x^p-q^py^p)\cdots (x^p-q^{p(n-1)}y^p)
$$
is divisible by $\tilxi^n\varphi([n]_q)$ as was to be proven.  This
finishes the proof of the proposition.
\end{proof}

  As the proof shows there exists unique choice of a $q$-divided power
  $$
  \gamma_{n,q}(x-y)
  $$
  which is functorial in the triple $(A,x,y)$ (with $x,y\in A$ satisfying the assumptions in \Cref{sec:q-logarithm-proposition-q-divided-powers-for-rank-1-elements}). From now on we will always assume that these $q$-divided powers are chosen.

Moreover, we get the following lemma concerning the convergence of the
$q$-logarithm.

\begin{lemma}
  \label{sec:q-logarithm-convergence-of-q-logarithm}
  Let $(A,I)$ be a prism over $(\Z_p[[q-1]],(\tilxi))$. Then for every
  element $x\in 1+\mathcal{N}^{\geq 1} A$ of rank $1$ the series
$$
\mathrm{log}_q(x)=\sum\limits_{n=1}^\infty
(-1)^{n-1}q^{-n(n-1)/2} [n-1]_q!\gamma_{n,q}(x-1)
$$
is well-defined and converges in $A$. Moreover,
$\log_q(x)\in \mathcal{N}^{\geq 1}A$ and
$$
\log_q(x)\equiv x-1 \ \mathrm{ mod }\ \mathcal{N}^{\geq 2}A
$$
and
  $$
\log_q(xy)=\log_q(x)+\log_q(y)
$$
for any $x,y\in 1+\mathcal{N}^{\geq 1} A$ of rank $1$.
\end{lemma}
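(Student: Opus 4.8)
The plan is to reduce all four assertions to two ``universal'' prisms flat over $\Z_p[[q-1]]$, and to handle the genuinely delicate point — additivity — via an eigenspace property of the $q$-logarithm together with \Cref{sec:transversal-prisms-corollary-injectivity-for-reduction-modulo-second-nygaard-filtration}. Concretely, the chosen $q$-divided powers $\gamma_{n,q}(x-1)$ are functorial in the pair $(A,x)$ with $x$ of rank $1$ and $x-1\in\mathcal N^{\geq 1}A$ (equivalently $\varphi(x-1)=x^p-1\in\tilxi A$, which is exactly the hypothesis of \Cref{sec:q-logarithm-proposition-q-divided-powers-for-rank-1-elements} applied to $(x,1)$), so the formal expression defining $\log_q(x)$ is compatible with maps of prisms over $(\Z_p[[q-1]],(\tilxi))$ sending rank-$1$ elements to rank-$1$ elements. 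Hence it suffices to treat the prism $A_0:=\Z_p[[q-1]]\{y^{\pm 1},\tfrac{y^p-1}{\tilxi}\}^{\wedge}$ — the $(p,\tilxi)$-completed prismatic envelope, which is flat over $\Z_p[[q-1]]$ and therefore transversal, with $y$ a rank-$1$ unit in $1+\mathcal N^{\geq 1}A_0$ — and, for additivity, the analogous two-variable transversal prism $A_1$ with rank-$1$ units $y_1,y_2$; recall transversal prisms are classically $(p,\tilxi)$-adically complete.

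\emph{Convergence and the two congruences.} Write $t_n:=(-1)^{n-1}q^{-n(n-1)/2}[n-1]_q!\,\gamma_{n,q}(x-1)$. By \Cref{sec:q-logarithm-proposition-q-divided-powers-for-rank-1-elements} one has $\gamma_{n,q}(x-1)\in\mathcal N^{\geq n}A_0$; since $\mathcal N^{\geq n}A_0$ is an ideal and $q\in A_0^{\times}$, also $t_n\in\mathcal N^{\geq n}A_0$, and $t_1=\gamma_{1,q}(x-1)=x-1$. For convergence it is enough to note that $[n-1]_q!$, seen in $A_0$ through $\Z_p[[q-1]]\to A_0$, is divisible by $\tilxi=[p]_q$ to the power $\lfloor(n-1)/p\rfloor$ — each factor $[j]_q$ with $p\mid j$ equals $[p]_q\,[j/p]_{q^p}$ — so $t_n\in\tilxi^{\lfloor(n-1)/p\rfloor}A_0\subseteq(p,\tilxi)^{\lfloor(n-1)/p\rfloor}A_0$, and $\sum_n t_n$ converges in the complete ring $A_0$, hence in $A$ via $A_0\to A$. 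Finally $\mathcal N^{\geq 1}A_0$ and $\mathcal N^{\geq 2}A_0$ are closed in the $(p,\tilxi)$-adic topology ($A_0/\tilxi$ and $A_0/\tilxi^2$ are $p$-torsion free, hence $p$-adically separated, and $\varphi$ is continuous), so $\log_q(x)\in\mathcal N^{\geq 1}A_0$ and $\log_q(x)-(x-1)=\sum_{n\geq 2}t_n\in\mathcal N^{\geq 2}A_0$; transporting along $A_0\to A$ yields the statements over $A$.

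\emph{Additivity.} I would deduce this from the eigenspace property $\log_q(x)\in A^{\varphi=\tilxi}$ (for $x$ rank $1$ in $1+\mathcal N^{\geq 1}A$). Granting it: for such $x,y$ the product $xy$ is again of this type, so $\Delta:=\log_q(xy)-\log_q(x)-\log_q(y)$ lies in $A^{\varphi=\tilxi}$, while by the previous paragraph $\Delta\equiv(xy-1)-(x-1)-(y-1)=(x-1)(y-1)\in\mathcal N^{\geq 1}A\cdot\mathcal N^{\geq 1}A\subseteq\mathcal N^{\geq 2}A$. Reducing to the transversal prism $A_1$ and invoking \Cref{sec:transversal-prisms-corollary-injectivity-for-reduction-modulo-second-nygaard-filtration} — injectivity of $A_1^{\varphi=\tilxi}\to\mathcal N^{\geq 1}A_1/\mathcal N^{\geq 2}A_1$ — forces $\Delta=0$ in $A_1$, and then in $A$.

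\emph{The eigenspace property — the main obstacle.} It remains to prove $\varphi(\log_q(x))=\tilxi\log_q(x)$. Applying $\varphi$ termwise (legitimate because $\varphi$ is continuous and $\varphi((p,\tilxi))\subseteq(p,\tilxi)$) and using $\varphi(q)=q^p$ together with $\varphi(\gamma_{n,q}(x-1))=\gamma_{n,q^p}(x^p-1)$ — immediate from $[n]_q!\,\gamma_{n,q}(x-1)=(x-1)(x-q)\cdots(x-q^{n-1})$, $\varphi(x)=x^p$, and functoriality of the chosen divided powers — one identifies $\varphi(\log_q(x))=\log_{q^p}(x^p)$. The claim thus reduces to the $q$-identity $\log_{q^p}(x^p)=[p]_q\log_q(x)$ in $A$, and this is the hard step. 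I would prove it in the torsion-free universal prism $A_0$, either by reducing modulo $\varphi^{r}(I)$ for all $r\geq 0$ — which suffices by \Cref{lemma_injectivity_for_transversal}, and where by \Cref{sec:q-logarithm-lemma-xi-valuation-of-q-factorial} all but finitely many terms of each series vanish (those with $[n-1]_q!\in(\varphi^{r}(\tilxi))$), so the identity becomes a finite computation over $\Z_p[\zeta_{p^{r+1}}][1/p]$ with $q=\zeta_{p^{r+1}}$ — or by passing, after inverting all $[n]_q$ and $(q-1)$-adically completing, to the comparison $\log_q=\tfrac{q-1}{\log q}\log$ of \Cref{lemma_properties_q_logarithm}, for which $\log_{q^p}(x^p)=\tfrac{q^p-1}{\log(q^p)}\log(x^p)=\tfrac{q^p-1}{\log q}\log x=[p]_q\log_q(x)$ is formal. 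The recurring subtlety, and where I expect most of the effort to go, is that several mutually cofinal topologies ($(p,\tilxi)$-adic, $(q-1)$-adic, Nygaard-adic) are in play, so one must be careful that the termwise manipulations of the series are valid; systematically working in the flat transversal universal prism, where the relevant ideals are explicit, is what makes this manageable.
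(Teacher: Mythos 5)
Your treatment of convergence and of the two congruences matches the paper's: the paper likewise observes that $[n-1]_q!\to 0$ $(p,I)$-adically and that $\gamma_{n,q}(x-1)\in\mathcal{N}^{\geq n}A$ by \Cref{sec:q-logarithm-proposition-q-divided-powers-for-rank-1-elements}; your explicit estimate $[n-1]_q!\in\tilxi^{\lfloor (n-1)/p\rfloor}A$ and your remark on closedness of the Nygaard steps in the transversal universal case are correct and slightly more careful than the text.

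For additivity you take a genuinely different route. The paper reduces to the universal flat prism and then base changes to $\Q_p[[q-1]]$, where $\log_q=\frac{q-1}{\log(q)}\log$ by \Cref{lemma_properties_q_logarithm} and additivity is inherited from the usual logarithm. You instead derive additivity from the eigenspace property $\varphi(\log_q(x))=\tilxi\log_q(x)$ combined with \Cref{sec:transversal-prisms-corollary-injectivity-for-reduction-modulo-second-nygaard-filtration}: the difference $\Delta$ lies in $A^{\varphi=\tilxi}\cap\mathcal{N}^{\geq 2}A$ of a transversal prism, hence vanishes. That deduction is clean and correct (rank-$1$ units are closed under products, and $(x-1)(y-1)\in\mathcal{N}^{\geq 2}A$ as you say), and it has the side benefit of isolating the eigenspace property, which is in any case implicitly needed later for the injectivity argument in the proof of the main theorem to apply to $\log_q([-]_\theta)$. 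The cost is that the entire difficulty is displaced into the identity $\log_{q^p}(x^p)=[p]_q\log_q(x)$, which you only sketch. Of your two strategies for it, the second — inverting the $[n]_q$ and invoking $\log_q=\frac{q-1}{\log(q)}\log$ — is exactly the base-change maneuver the paper applies directly to additivity, so the detour through the eigenspace does not actually save you that step; and it carries the same (acknowledged but unresolved) burden of justifying that the universal flat prism injects into a completion where the power-series identity of \Cref{lemma_properties_q_logarithm} can be evaluated at $x$. Your first strategy (reduction modulo $\varphi^r(I)$ via \Cref{lemma_injectivity_for_transversal} and \Cref{sec:q-logarithm-lemma-xi-valuation-of-q-factorial}) correctly truncates both series to finitely many terms, but the resulting finite $q$-identity at $q=\zeta_{p^{r+1}}$ is asserted rather than verified, so as written it is a plan rather than a proof. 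In short: the architecture is sound and a legitimate alternative, but to be complete you must still carry out the $\Q_p[[q-1]]$ comparison — at which point the paper's shorter direct argument for additivity becomes available anyway.
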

\begin{proof}
  By our assumption on $x$ we get $\varphi(x-1)\in \tilxi A$ and thus
  we may apply
  \Cref{sec:q-logarithm-proposition-q-divided-powers-for-rank-1-elements}
  to $x=x$ and $y=1$. Thus the (canonical choice of) $q$-divided powers
$$
\gamma_{n,q}(x-1)=\frac{(x-1)(x-q)\cdots (x-q^{n-1})}{[n]_q!}
$$
in $A$ are well-defined. Moreover, as
$$
\log_q(x)=\sum\limits_{n=1}^\infty
(-1)^{n-1}q^{-n(n-1)/2}[n-1]_q!\gamma_{n,q}(x-1)
$$
and the elements $[n-1]_q!$ tend to zero in $A$ for the $(p,I)$-adic topology we can conclude that
the series $\mathrm{log}_q(x)$ converges because $A$ is
$\tilxi$-adically complete. The claim concerning the Nygaard
filtrations follows directly from
$\gamma_{n,q}(x-1)\in \mathcal{N}^{\geq n} A$, which was proven in
\Cref{sec:q-logarithm-proposition-q-divided-powers-for-rank-1-elements}.
That $\log_q$ is a homomorphism can be seen in the universal case in which $A$ is flat over $\Z_p[[q-1]]$ (by \cite[Proposition 3.13]{bhatt_scholze_prisms_and_prismatic_cohomology}). Then the formula $\log_q(xy)=\log_q(x)+\log_q(y)$ can be checked after base change to $\Q_p[[q-1]]$ where it follows from \Cref{lemma_properties_q_logarithm} as the usual logarithm is a homomorphism.
\end{proof}

\section{Prismatic cohomology and topological cyclic
  homology}\label{sec:prism-cohom-topol}

This section is devoted to the relation of the prismatic
cohomology developed by Bhatt and Scholze \cite{bhatt_scholze_prisms_and_prismatic_cohomology} with
topological cyclic homology (as described by Bhatt, Morrow and Scholze \cite{bhatt_morrow_scholze_topological_hochschild_homology}) following \cite[Section 11.5.]{bhatt_scholze_prisms_and_prismatic_cohomology}.

Let $R$ be a quasi-regular semiperfectoid ring (cf.\ \cite[Definition 4.19.]{bhatt_morrow_scholze_topological_hochschild_homology}), and let $S$ be any perfectoid ring with a map $S \to R$. 
\begin{proposition}
  \label{sec:prism-cohom-topol-1-initial-object-for-quasi-regular-semiperfectoids}
The category of prisms $(A,I)$ with a map $R \to A/I$ admits an initial object $(\prism_R^{\rm init},I)$, which is a bounded prism. Moreover, $\prism_R^{\rm init}$ identifies with the derived prismatic  cohomology $\prism_{R/A_{\rm inf}(S)}$, for any choice of $S$ as before. 
\end{proposition}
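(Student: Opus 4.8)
The plan is to realize the initial object directly as relative (derived) prismatic cohomology. Fix a perfectoid ring $S$ with a map $S\to R$ and set $A:=\prism_{R/A_{\inf}(S)}$, a priori only an object of $\mathcal{D}(A_{\inf}(S))$. The crucial first step is to show that $A$ is concentrated in cohomological degree $0$, i.e.\ is an ordinary commutative ring; this is exactly where the quasi-regular semiperfectoid hypothesis on $R$ is used. By the Hodge--Tate comparison, the reduction $\overline A:=A/IA$ (with $I\subseteq A_{\inf}(S)$ the distinguished ideal) carries an exhaustive conjugate filtration whose graded pieces are $\wedge^i\widehat{L_{R/S}}[-i]$; since $R$ is quasi-regular semiperfectoid, $\widehat{L_{R/S}}$ lies in homological degree $1$ with $\widehat{L_{R/S}}[-1]$ $p$-completely flat over $R$, so each graded piece --- hence $\overline A$, hence, by $IA$-completeness, $A$ itself --- is concentrated in degree $0$. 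Equivalently (and this is the route of \cite{bhatt_scholze_prisms_and_prismatic_cohomology} and \cite{bhatt_morrow_scholze_topological_hochschild_homology}, which I would simply cite), one presents $R$ as a quotient of a perfectoid ring by a quasi-regular sequence and invokes that the corresponding derived prismatic envelope is discrete and $(p,I)$-completely flat \cite[Prop.~3.13]{bhatt_scholze_prisms_and_prismatic_cohomology}.

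Granting discreteness, the second step is routine: $(A,IA)$ is a bounded prism. It carries a $\delta$-structure (coming from the prismatic envelope, resp.\ the $\delta$-constructions over $A_{\inf}(S)$); it is derived $(p,I)$-complete by construction; $IA$ is generated by the distinguished element inherited from $A_{\inf}(S)$, still a non-zero-divisor by the flatness above and hence still distinguished; the relation $p\in(I,\varphi(I))$ is inherited; and the bounded $p^\infty$-torsion of $\overline A$ follows from its conjugate-filtration description together with the boundedness hypothesis on $R$. The Hodge--Tate structure map supplies the ring map $R\to\overline A=A/IA$, so $(A,IA)$ is an object of the category in the statement.

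The third step is the universal property, from which the independence of $S$ also follows. Let $(A',I')$ be any prism together with a ring map $R\to A'/I'$. The composite $S\to R\to A'/I'$ extends, by the universal property of the perfect prism $A_{\inf}(S)$ --- prism maps $(A_{\inf}(S),I_S)\to(A',I')$ being in natural bijection with ring maps $S\to A'/I'$, where one uses rigidity of prisms so that automatically $I'=I_SA'$ --- to a unique map of prisms $(A_{\inf}(S),I_S)\to(A',I')$ compatible with the maps from $R$ (and any morphism of such prisms is automatically one over $A_{\inf}(S)$). Hence the category of prisms equipped with a ring map from $R$ is canonically equivalent to the category underlying the relative prismatic site of $R$ over $A_{\inf}(S)$, which, $R$ being quasi-regular semiperfectoid, has an initial object, namely $(\prism_{R/A_{\inf}(S)},I)=(A,IA)$ (see \cite{bhatt_scholze_prisms_and_prismatic_cohomology,bhatt_morrow_scholze_topological_hochschild_homology}). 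Transporting back gives the initial object of the category in the statement, proving at once its existence, the fact that it is a bounded prism, and its identification with $\prism_{R/A_{\inf}(S)}$. Finally, the category of prisms-with-a-map-from-$R$ makes no reference to $S$, so its initial object is canonical, whence $\prism_{R/A_{\inf}(S)}$ is, for every admissible choice of $S$, canonically this single object.

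The main obstacle is the first step --- the discreteness of $\prism_{R/A_{\inf}(S)}$ --- which genuinely requires the quasi-regular semiperfectoid hypothesis, through the flatness of the conormal module, equivalently the degeneration of the conjugate filtration into degree $0$. Once that is in hand, both the prism axioms and the universal property are formal consequences of the established properties of perfect prisms and of prismatic envelopes.
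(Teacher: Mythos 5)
Your argument is correct and is essentially the paper's approach: the paper simply cites \cite[Proposition 7.2, Proposition 7.10]{bhatt_scholze_prisms_and_prismatic_cohomology} (or \cite[Proposition 3.4.2]{anschuetz_le_bras_prismatic_dieudonne_theory}), and what you have written is the standard argument behind those references — discreteness of $\prism_{R/A_{\inf}(S)}$ from the Hodge--Tate/conjugate filtration, verification of the prism axioms, and the identification of the category of prisms with a map from $R$ with the relative prismatic site over $A_{\inf}(S)$ via the universal property of perfect prisms and rigidity. No gaps.
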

\begin{proof}
See \cite[Proposition 7.2, Proposition 7.10]{bhatt_scholze_prisms_and_prismatic_cohomology} or \cite[Proposition 3.4.2]{anschuetz_le_bras_prismatic_dieudonne_theory}.
\end{proof}

In the following, we simply write $\prism_R=\prism_R^{\rm init}=\prism_{R/A_{\rm inf}(S)}$.

\begin{theorem}
  \label{sec:prism-cohom-topol-1-comparsion-initial-and-topological-prism}
  Let $R$ be a quasi-regular semiperfectoid ring. There is a functorial (in $R$) $\delta$-ring structure on $\widehat{\prism}_R^{\mathrm{top}}:=\pi_0(\mathrm{TC}^{-}(R;\Z_p))$ refining the cyclotomic Frobenius. The induced map $\prism_R=\prism_R^{\rm init} \to \widehat{\prism}_R^{\mathrm{top}}$ identifies $\widehat{\prism}_R^{\mathrm{top}}$ with the completion with respect to the Nygaard filtration (\Cref{sec:transversal-prisms-definition-nygaard-filatration}) of $\prism_R$, and is compatible with the Nygaard filtration on both sides.
\end{theorem}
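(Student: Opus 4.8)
The plan is to deduce this from the foundational results of \cite{bhatt_morrow_scholze_topological_hochschild_homology} and \cite[\S 11]{bhatt_scholze_prisms_and_prismatic_cohomology}; the work is in assembling the relevant inputs and checking the $\delta$-refinement.

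First I would recall the structure of $\THH$ and its variants. For $R$ quasi-regular semiperfectoid, \cite{bhatt_morrow_scholze_topological_hochschild_homology} shows that $\THH(R;\Z_p)$ is connective with $\pi_0=R$ and that $\pi_\ast\THH(R;\Z_p)$ is concentrated in even degrees. Evenness forces the homotopy-fixed-point and Tate spectral sequences for the $\mathbb{T}$-action to degenerate, so $\pi_\ast\TC^-(R;\Z_p)$ and $\pi_\ast\TP(R;\Z_p)$ are even; moreover the norm cofiber sequence $\Sigma^2\THH(R;\Z_p)_{h\mathbb{T}}\to\TC^-(R;\Z_p)\xrightarrow{\mathrm{can}}\TP(R;\Z_p)$, together with connectivity of $\THH(R;\Z_p)_{h\mathbb{T}}$, shows that $\mathrm{can}$ is an isomorphism on $\pi_0$. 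Thus one may set $\widehat{\prism}_R^{\mathrm{top}}:=\pi_0\TC^-(R;\Z_p)\cong\pi_0\TP(R;\Z_p)$, and composing the effect of $\varphi_{\cycl}^{h\mathbb{T}}$ on $\pi_0$ with $\mathrm{can}^{-1}$ gives a ring endomorphism $\varphi$ of $\widehat{\prism}_R^{\mathrm{top}}$ refining the cyclotomic Frobenius.

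Next I would upgrade $\varphi$ to a $\delta$-structure. One checks that $\varphi$ is a Frobenius lift, i.e.\ $\varphi(x)\equiv x^p$ modulo $p$; this is part of the formal structure of the cyclotomic Frobenius (cf.\ \cite{nikolaus_scholze_on_topological_cyclic_homology}, \cite{bhatt_morrow_scholze_topological_hochschild_homology}), and in any case may be verified after quasisyntomic base change to a perfectoid ring $S$ surjecting onto $R$ and to the terms of its \v{C}ech nerve --- there $\widehat{\prism}_S^{\mathrm{top}}=A_{\inf}(S)=W(S^\flat)$ and $\varphi$ is the Witt-vector Frobenius --- using that $\TC^-(-;\Z_p)$ and the cyclotomic Frobenius satisfy quasisyntomic descent. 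Moreover $\widehat{\prism}_R^{\mathrm{top}}$ is $p$-torsion free: it is separated for the Nygaard filtration, and a $p$-torsion element is killed by $\varphi$ (as in the proof of \cite[Lemma~2.28]{bhatt_scholze_prisms_and_prismatic_cohomology}) and therefore lies in $\mathcal{N}^{\geq i}\widehat{\prism}_R^{\mathrm{top}}$ for every $i$. Since a Frobenius lift on a $p$-complete $p$-torsion free ring is precisely a $\delta$-structure, this produces the desired $\delta$-ring structure; functoriality in $R$ is inherited from that of $\TC^-$, $\TP$ and $\varphi_{\cycl}$.

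Finally, for the comparison I would invoke the motivic filtrations on $\TP(R;\Z_p)$ and $\TC^-(R;\Z_p)$ of \cite{bhatt_morrow_scholze_topological_hochschild_homology}: these are complete, multiplicative, satisfy quasisyntomic descent, and on a perfectoid ring $S$ reduce to the $\tilxi$-adic, resp.\ trivial, filtration with $A_{\inf}(S)$ in degree $0$; by evenness they are determined by the homotopy groups, and their graded pieces are, up to Breuil--Kisin twist and shift, the Nygaard-filtered derived prismatic cohomology of $R$. This identifies $\widehat{\prism}_R^{\mathrm{top}}$, as a filtered $\delta$-ring, with the Nygaard completion of $\prism_R$, the comparison map $\prism_R\to\widehat{\prism}_R^{\mathrm{top}}$ being the morphism of quasisyntomic sheaves that restricts to the identity $A_{\inf}(S)=A_{\inf}(S)$ on perfectoid rings. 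I expect \textbf{the main obstacle to be precisely the identification of the two incarnations of the Nygaard filtration} --- the algebraically defined $\varphi^{-1}((\tilxi^\bullet))$ on $\prism_R$ and the one arising topologically (via multiplication by the periodicity element, so that $\mathcal{N}^{\geq i}\widehat{\prism}_R^{\mathrm{top}}$ is the image of $\pi_{2i}\TC^-(R;\Z_p)$). This is handled by quasisyntomic descent, reducing to perfectoid rings where the Nygaard filtration is trivial and the two visibly agree, and is carried out in \cite[\S 11]{bhatt_scholze_prisms_and_prismatic_cohomology}, to which I would ultimately appeal for the conclusion.
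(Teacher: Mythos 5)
Your proposal is correct and matches the paper's approach: the paper's entire proof is the citation ``See \cite[Theorem 11.10]{bhatt_scholze_prisms_and_prismatic_cohomology}'', and you likewise defer to \cite[\S 11]{bhatt_scholze_prisms_and_prismatic_cohomology} for the substance, prefacing it with an accurate sketch of the ingredients (evenness of $\pi_\ast\THH$, $\mathrm{can}$ being an isomorphism on $\pi_0$, the Frobenius lift plus $p$-torsion freeness giving the $\delta$-structure, and quasisyntomic descent to perfectoids for the comparison of Nygaard filtrations). The only quibbles are cosmetic: the norm cofiber sequence involves a single suspension of $\THH(R;\Z_p)_{h\mathbb{T}}$, and the quasisyntomic cover used for descent goes $R\to R_\infty$ with $R_\infty$ perfectoid rather than being the surjection $S\to R$; neither affects the argument.
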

\begin{proof}
 See \cite[Theorem 11.10]{bhatt_scholze_prisms_and_prismatic_cohomology}.
\end{proof}

The Nygaard filtration on $\widehat{\prism}_R^{\mathrm{top}}$ is defined as the double-speed abutment filtration for the (degenerating) homotopy fixed point spectral sequence
$$
E^{ij}_2:=H^i(\mathbb{T},\pi_{-j}(\THH(R;\Z_p)))\Rightarrow \pi_{-i-j}(\TC^{-}(R;\Z_p))
$$
for the $\mathbb{T}=S^1$-action on $\THH(R;\Z_p)$.
If $\eta \in H^2(\mathbb{T},\Z)$ is a generator, then multiplication by any lift $v\in \pi_{-2}(\TC^{-}(R;\Z_p))$ of the image of $\eta$ in $H^2(\mathbb{T},\pi_0(\THH(R;\Z_p)))$ induces isomorphisms
$$
\pi_{2i}(\TC^{-}(R;\Z_p)))\cong \mathcal{N}^{\geq i}\widehat{\prism}_R^{\mathrm{top}}
$$
for $i\in \Z$.

\begin{remark}
  \label{remark-delta-structure-on-nygaard-completed-prism}

We will only use the fact that $\widehat{\prism}_R$ is a prism in this paper (as we will apply the results of \Cref{sec:some-results-prisms} to $\pi_0(\TC^{-}(R;\Z_p))$) and that the topological Nygaard filtration, defined via the homotopy fixed point spectral sequence, agrees with the Nygaard filtration from \Cref{sec:transversal-prisms-definition-nygaard-filatration}, but the way one proves this is by showing the stronger statement that $\widehat{\prism}_R^{\mathrm{top}}$ is the Nygaard completion of $\prism_R$. We ignore if there is a more direct way to produce the $\delta$-structure on $\widehat{\prism}_R$ (cf.\ \cite[Remark 1.14.]{bhatt_scholze_prisms_and_prismatic_cohomology}). 
\end{remark}

\section{The $p$-completed cyclotomic trace in degree $2$}
\label{sec:p-compl-cycl}

Now we are settled to prove our main theorem on the identification of
the $p$-completed cyclotomic trace. Recall that for any ring $A$ the cyclotomic trace
$$
\mathrm{ctr}\colon K(A)\to \TC(A)
$$
from the algebraic $K$-theory of $A$ to its topological cyclic homology is a natural morphism\footnote{When upgraded to a natural transformation of functors on small stable $\infty$-categories the cyclotomic trace is uniquely determined by these properties, cf.\ \cite[Section 10.3.]{blumberg_gepner_tabuada_a_universal_characterization_of_higher_algebraic_k_theory}.} refining the Dennis trace $\mathrm{Dtr}\colon K(A)\to \HH(A)$ introduced in \Cref{sec:p-completed-dennis}, cf.\  \cite[Section 10.3]{blumberg_gepner_tabuada_a_universal_characterization_of_higher_algebraic_k_theory}, \cite[Section 5]{boekstedt_hsiang_madsen_the_cyclotomic_trace_and_algebraic_k_theory_of_spaces}.
 Let us carefully fix some notation.
For the whole section we fix a generator $\gamma\in H_1(\mathbb{T},\Z)$, but note that the formulas in \Cref{sec:conclusion-theorem-identification-of-cyclotomic-trace} will be independent of this choice.
  Set $\Z_p^{\mathrm{cycl}}$ as the $p$-completion of
$\Z_p[\mu_{p^\infty}]$ and choose some $p$-power compatible system of
$p$-power roots of unity
$$
\varepsilon:=(1,\zeta_p,\zeta_{p^2},\ldots)\in
(\Z_p^{\mathrm{cycl}})^\flat
$$
  with $\zeta_p\neq 1$. This choice determines several elements as we will now discuss.
Set
$$
q:=[\varepsilon]_{\theta}\in A_\inf(\Z_p^\cycl):=W((\Z_p^{\mathrm{cycl}})^\flat)\cong \pi_0(\TC^-(\Z_p^\cycl;\Z_p)),
$$
$$
\mu:=q-1,
$$
$$
\tilxi:=[p]_q=\frac{q^p-1}{q-1}=1+q+\ldots+q^{p-1}.
$$
and
$$
\xi=\varphi^{-1}(\tilxi).
$$
\blue{Note that the ring $A_\inf(\Z_p^\cycl)$ is the $(p,q-1)$-adic completion of $\Z_p[q^{1/p^{\infty}}]$.}
We now construct elements
$$
u\in \pi_{2}(\TC^-(\Z_p^{\cycl};\Z_p)),
$$
$$
v\in \pi_{-2}(\TC^-(\Z_p^\cycl;\Z_p))
$$
such that $uv=\xi\in \pi_0(\TC^-(\Z_p^{\cycl};\Z_p))$\footnote{We need a finer statement than \cite[Proposition 6.2 and Proposition 6.3]{bhatt_morrow_scholze_topological_hochschild_homology} which asserts the existence of some $u,v$ as above with $uv=a\xi$ for some unspecified unit $a\in A_{\inf}(\Z_p^\cycl)^\times$.}. The elements $u,v$ will be uniquely determined by $\varepsilon$.
Let
$$
\mathrm{ctr}\colon T_p(\Z_p^\cycl)^\times \to \pi_2(\TC(\Z_p^\cycl;\Z_p))
$$
be the cyclotomic trace in degree $2$. We denote by the same symbol the composition
$$
\mathrm{ctr}\colon T_p(\Z_p^\cycl)^\times \to \pi_2(\TC^{-}(\Z_p^\cycl;\Z_p))
$$
with the canonical morphism $\TC(-;\Z_p)\to \TC^-(-;\Z_p)$.
Let
$$
\mathrm{can}\colon \TC^{-}(-;\Z_p)\to \TP(-;\Z_p)
$$
be the canonical morphism (from homotopy to Tate fixed points).

\begin{lemma}
  \label{sec:p-compl-cycl-1-lemma-cyclotomic-trace-of-epsilon-divisible-by-mu}
  The element
  $$
  \mathrm{can}(\mathrm{ctr}(\varepsilon))\in \pi_2(\TP(\Z_p^\cycl;\Z_p))
  $$
  is divisible by $\mu$.
\end{lemma}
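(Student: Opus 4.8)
The plan is to reduce the statement to a purely algebraic computation inside the ring $A_\inf(\Z_p^\cycl)=W((\Z_p^\cycl)^\flat)$. Write $R:=\Z_p^\cycl$; since $R$ is perfectoid we have $\widehat{\prism}_R\cong A_\inf(R)$, and the exact sequence recalled in the introduction (for $i=1$) identifies $\pi_2(\TC(R;\Z_p))$ with $A_\inf(R)^{\varphi=\tilxi}$, compatibly with the inclusion $\pi_2(\TC(R;\Z_p))\hookrightarrow\pi_2(\TC^{-}(R;\Z_p))=\mathcal{N}^{\geq 1}\widehat{\prism}_R$ and with $\mathrm{can}\colon\mathcal{N}^{\geq 1}\widehat{\prism}_R\hookrightarrow\widehat{\prism}_R=\pi_2(\TP(R;\Z_p))$, both of which are the obvious inclusions (this is exactly what the displayed exact sequence in the introduction encodes, $\mathrm{can}$ on $\pi_2$ being $1$ and $\varphi_\cycl^{h\mathbb{T}}$ being $\varphi/\tilxi$). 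Hence $\mathrm{can}(\mathrm{ctr}(\varepsilon))$ is nothing but the element $\mathrm{ctr}(\varepsilon)\in A_\inf(R)^{\varphi=\tilxi}$ regarded inside $A_\inf(R)$, and it suffices to prove the inclusion
$$A_\inf(\Z_p^\cycl)^{\varphi=\tilxi}\subseteq \mu\,A_\inf(\Z_p^\cycl).$$

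First I would record the obvious reverse inclusion, which shows the statement is sharp: $\varphi(\mu)=q^p-1=\mu\tilxi$, so $\mu\in A_\inf(R)^{\varphi=\tilxi}$; in fact the argument below yields $A_\inf(R)^{\varphi=\tilxi}=\Z_p\cdot\mu$. For the inclusion, take $y\in A_\inf(R)$ with $\varphi(y)=\tilxi y$. Dividing the relation $\varphi(\mu)=\mu\tilxi$ into this one gives $\varphi(y/\mu)=y/\mu$ in $A_\inf(R)[1/\mu]$.

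Now I would use that, modulo $p$, the element $\mu=[\varepsilon]-1$ reduces to $\varepsilon-1\in R^\flat$, which is a pseudo-uniformizer of the perfect (rank-one) valuation ring $R^\flat$; consequently inverting $\mu$ turns $R^\flat$ into its fraction field after reduction mod $p$, and $A_\inf(R)[1/\mu]$ embeds into $W(\mathrm{Frac}(R^\flat))$, the ring of Witt vectors of the perfect field $\mathrm{Frac}(R^\flat)$. Since $\mathrm{Frac}(R^\flat)$ is a perfect field, the Teichm\"uller-coordinate description of the Witt Frobenius gives $W(\mathrm{Frac}(R^\flat))^{\varphi=1}=W(\mathrm{Frac}(R^\flat)^{\mathrm{Frob}=1})=W(\F_p)=\Z_p$. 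Hence $y/\mu\in\Z_p\subseteq A_\inf(R)$, i.e.\ $y\in\Z_p\mu\subseteq\mu\,A_\inf(R)$. Combining, $\mathrm{ctr}(\varepsilon)=c\mu$ for some $c\in\Z_p$, so $\mathrm{can}(\mathrm{ctr}(\varepsilon))=c\mu$ is divisible by $\mu$.

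The only delicate point — the one I expect to need a sentence of justification rather than a formal manipulation — is the embedding $A_\inf(R)[1/\mu]\hookrightarrow W(\mathrm{Frac}(R^\flat))$: one needs that $\mu$ and a Teichm\"uller lift of the pseudo-uniformizer $\varepsilon-1$ of $R^\flat$ generate the same ideal after localizing (they agree modulo $p$, hence differ by a $1$-unit in the $p$-complete ring $W(\mathrm{Frac}(R^\flat))$), so that localizing $W(R^\flat)$ at $\mu$ indeed lands inside $W(\mathrm{Frac}(R^\flat))$ and the Frobenius-fixed computation applies. Everything else is formal once the identification $\pi_2(\TC(R;\Z_p))\cong\widehat{\prism}_R^{\varphi=\tilxi}$ from the introduction is in hand.
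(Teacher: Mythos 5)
Your proposal is correct, but the algebraic heart of your argument is genuinely different from the paper's. Both proofs first translate the statement into the claim that a certain element of $A_\inf(\Z_p^\cycl)$ lying in the $\varphi=\tilxi$ eigenspace is divisible by $\mu$ (the paper does this by fixing an arbitrary generator $\sigma'$ of $\pi_2(\TP(\Z_p^\cycl;\Z_p))$ and using that the kernel of $A_\inf(\Z_p^\cycl)\to W(\Z_p^\cycl)$ is $(\mu)$). From there the paper base-changes to $\mathcal{O}_C$ for $C$ algebraically closed, uses the precisely normalized generators $u',v'$ of Bhatt--Morrow--Scholze, and invokes Fargues--Fontaine's computation that $(A_\inf(\mathcal{O}_C)[1/p])^{\varphi=\tilxi}$ is one-dimensional over $\Q_p$, together with the fact that $\mu$ is not divisible by $p$, to get $A_\inf(\mathcal{O}_C)^{\varphi=\tilxi}=\Z_p\mu$. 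You instead compute the eigenspace directly over $\Z_p^\cycl$: divide by $\mu$ inside $W(\mathrm{Frac}((\Z_p^\cycl)^\flat))$, where $\mu$ and $\varphi(\mu)$ become units, and use $W(k)^{\varphi=1}=W(k^{\mathrm{Frob}=1})=\Z_p$ for the perfect field $k=\mathrm{Frac}((\Z_p^\cycl)^\flat)$. This is more elementary (no passage to $C$, no Fargues--Fontaine), at the cost of using that $(\Z_p^\cycl)^\flat$ is a rank-one valuation ring in a perfect field -- which is exactly true here.

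Two points deserve a sentence of care. First, the identification $\pi_2(\TC(\Z_p^\cycl;\Z_p))\cong A_\inf(\Z_p^\cycl)^{\varphi=\tilxi}$ with the stated normalization of $\mathrm{can}$ and of the divided Frobenius is, at this stage of the paper, only available up to an unspecified unit $a$ (this lemma is itself an ingredient in pinning down the normalization); the paper avoids any circularity by phrasing everything in terms of an arbitrary $\sigma'$, since divisibility by $\mu$ is independent of the trivialization. Your argument survives the twist -- if $\varphi(y)=a\tilxi y$ then $y/\mu\in W(k)^{\varphi=a}$, which still lands in $W((\Z_p^\cycl)^\flat)$ because a unit of $W(k)$ whose $\varphi$-multiplier lies in $W((\Z_p^\cycl)^\flat)^\times$ has Teichm\"uller coordinates integral over the valuation ring -- but you should either say this or work with an unnormalized generator as the paper does. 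Second, the division $\varphi(y/\mu)=y/\mu$ should be carried out after embedding into $W(k)$ (cross-multiplying $\varphi(y)\mu=\varphi(\mu)y$ there), since extending $\varphi$ to $A_\inf(\Z_p^\cycl)[1/\mu]$ requires knowing $\varphi(\mu)$ is invertible in that localization.
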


A similar statement (in terms of $\mathrm{TF}$) is proven in \cite[Proposition 2.4.2]{hesselholt_on_the_topological_cyclic_homology_of_the_algebraic_closure_of_a_local_field} (cf.\ \cite[Definition 4.1]{hesselholt_topological_hochschild_homology_and_the_hasse_weil_zeta_function}) using the explicit description of the cyclotomic trace in degree $1$ via $\mathrm{TR}$ from \cite[Lemma 4.2.3.]{geisser_hesselholt_topological_cyclic_homology_of_schemes}.

\begin{proof}
  Fix a generator
  $$
  \sigma^\prime\in \pi_2(\TP(\Z_p^\cycl;\Z_p)).
  $$
  It suffices to show that $\mathrm{can}\circ\mathrm{ctr}(\varepsilon)$ maps to $0$ under the composition
  $$
  \pi_2(\mathrm{TP}(\Z_p^\cycl;\Z_p))\xrightarrow{{\sigma^\prime}^{-1}}\pi_0(\TP(\Z_p^\cycl;\Z_p))\cong A_\inf(\Z_p^\cycl)\to W(\Z_p^\cycl)
  $$
  because the kernel of $A_\inf(\Z_p^\cycl)\to W(\Z_p^\cycl)$ is generated by $\mu$ (cf.\ \cite[Lemma 3.23]{bhatt_morrow_scholze_integral_p_adic_hodge_theory}).
  It therefore suffices to prove the statement for $\mathcal{O}_C$ for $C/\Q_p^\cycl$ an algebraically closed, complete non-archimedean extension. Over $\mathcal{O}_C$ we can (after changing $\sigma^\prime$) find
  $$
  u^\prime\in \pi_2(\TC^{-}(\mathcal{O}_C;\Z_p)), 
  $$
  $$
  v^\prime\in \pi_{-2}(\TC^{-}(\mathcal{O}_C;\Z_p))
  $$
  such that
  $$
  u^\prime v^\prime=\xi=\frac{\mu}{\varphi^{-1}(\mu)},
  $$
  $$
  \mathrm{can}(v^\prime)={\sigma^{\prime}}^{-1}
  $$
  and the cyclotomic Frobenius maps $u^\prime$ to $\sigma^\prime$, cf.\ \cite[Proposition 6.2., Proposition 6.3.]{bhatt_morrow_scholze_topological_hochschild_homology}. Then multiplication by $v^\prime$ induces an isomorphism
  $$
  \pi_2(\TC(\mathcal{O}_C;\Z_p))\cong A_{\inf}(\mathcal{O}_C)^{\varphi=\tilxi}.
  $$
  By \cite[Proposition 6.2.10.]{fargues_fontaine_courbes_et_fibres_vectoriels_en_theorie_de_hodge_p_adique}
  $$
  (A_\inf(\mathcal{O}_C)[1/p])^{\varphi=\tilxi}
  $$
  is $1$-dimensional over $\Q_p$ and thus generated by $\mu$ (as $\mu\neq 0$ and $\varphi(\mu)=\tilxi\mu$). But $\mu$ is not divisible by $p$ in $A_\inf(\mathcal{O}_C)$ as it maps to a unit in $W(C)$. This proves that $A_\inf(\mathcal{O}_C)^{\varphi=\tilxi}=\Z_p\mu$, which implies the claim.
\end{proof}

Let us define
$$
\sigma:=\frac{\mathrm{ctr}(\varepsilon)}{\mu}\in \pi_2(\TP(\Z_p^\cycl;\Z_p))
$$
and
$$
u:=\xi\sigma\in \pi_2(\TC^{-}(\Z_p^\cycl)).
$$
More precisely, the element $u$ is defined via $\mathrm{can}(u)=\xi\sigma$ (note that $\xi\sigma$ lies indeed in the image of
$$
\mathrm{can}\colon \pi_2(\TC^{-}(\Z_p^\cycl;\Z_p))\to \pi_2(\TP(\Z_p^\cycl;\Z_p))
$$
as the abutment filtration for the Tate fixed point spectral sequence on $\pi_2(\TP(\Z_p^\cycl;\Z_p))$ is the $\xi$-adic filtration).

\begin{lemma}
  \label{sec:p-compl-cycl-2-chosen-u-lifts-xi}
  The element $u$ defined above lifts the class of
  $$
  \delta_1(\gamma)\xi\in \pi_2(\THH(\Z_p^\cycl;\Z_p))\cong \pi_2(\HH(\Z_p^\cycl;\Z_p)) \overset{\alpha_\gamma}{\cong} (\xi)/(\xi^2).
  $$
\end{lemma}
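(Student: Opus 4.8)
The plan is to deduce the statement from the explicit description of the Dennis trace in \Cref{proposition_second_description_of_dennis_trace}, using that the cyclotomic trace refines the Dennis trace, and then to transport the computation from $\mathrm{ctr}(\varepsilon)$ to $u$ by means of the defining relation $\mathrm{can}(u)=\xi\sigma$. Set $I:=\ker(\theta\colon A_\inf(\Z_p^\cycl)\to\Z_p^\cycl)=(\xi)$. The ring $A_\inf(\Z_p^\cycl)$ is $(p,\xi)$-adically complete (the ideals $(p,\xi)$ and $(p,q-1)$ define the same topology), so \Cref{lemma_p_completion_does_not_change_under_witt_vector_base} (applied with $S=(\Z_p^\cycl)^\flat$) and \Cref{lemma_second_hochschild_homology_for_a_surjection} give isomorphisms $\pi_2(\HH(\Z_p^\cycl;\Z_p))\cong\pi_2(\HH(\Z_p^\cycl/A_\inf(\Z_p^\cycl);\Z_p))\overset{\alpha_\gamma}{\cong}(\xi)/(\xi^2)$ compatible with the chain of identifications appearing in the statement. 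Since the cyclotomic trace refines the Dennis trace and the latter factors through $\THH$, the composite $K(\Z_p^\cycl)\xrightarrow{\mathrm{ctr}}\TC\to\TC^{-}\to\THH\to\HH(\Z_p^\cycl/A_\inf(\Z_p^\cycl))$ is the Dennis trace followed by the base change map; hence by \Cref{proposition_second_description_of_dennis_trace} (using $[\varepsilon]_\theta=q$, which is immediate from the construction of $[-]_\theta$ in \Cref{lemma_p_power_compatible_elements_in_the_reduction}) the element $\varepsilon\in T_p((\Z_p^\cycl)^\times)$ goes to $\delta_1(\gamma)([\varepsilon]_\theta-1)=\delta_1(\gamma)(q-1)=\delta_1(\gamma)\mu$ in $(\xi)/(\xi^2)$.

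Next I would identify $\mathrm{ctr}(\varepsilon)$ and $u$ inside $\pi_2(\TC^{-}(\Z_p^\cycl;\Z_p))$. By construction $\mathrm{can}(\mathrm{ctr}(\varepsilon))=\mu\sigma$ and $\mathrm{can}(u)=\xi\sigma$, and in $A_\inf(\Z_p^\cycl)$ one has $\mu=\varphi^{-1}(\mu)\,\xi$; therefore $\mathrm{can}(\mathrm{ctr}(\varepsilon))=\varphi^{-1}(\mu)\,\mathrm{can}(u)=\mathrm{can}(\varphi^{-1}(\mu)u)$. The map $\mathrm{can}\colon\pi_2(\TC^{-}(\Z_p^\cycl;\Z_p))\to\pi_2(\TP(\Z_p^\cycl;\Z_p))$ is injective — under the chosen generators it is the inclusion $\mathcal N^{\geq 1}A_\inf(\Z_p^\cycl)=\xi A_\inf(\Z_p^\cycl)\hookrightarrow A_\inf(\Z_p^\cycl)$ — so $\mathrm{ctr}(\varepsilon)=\varphi^{-1}(\mu)u$ in $\pi_2(\TC^{-}(\Z_p^\cycl;\Z_p))$.

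Finally I would push this equality down to $(\xi)/(\xi^2)$ and cancel. The maps $\TC^{-}\to\THH\to\HH(\Z_p^\cycl/A_\inf(\Z_p^\cycl))$ are linear over $\pi_0(\TC^{-}(\Z_p^\cycl;\Z_p))=A_\inf(\Z_p^\cycl)$, the action on the target factoring through $\theta$. Writing $\bar u\in(\xi)/(\xi^2)$ for the image of $u$, the first paragraph then yields $\delta_1(\gamma)\mu=\theta(\varphi^{-1}(\mu))\cdot\bar u=(\zeta_p-1)\bar u$ in $(\xi)/(\xi^2)$. On the other hand $(\xi)/(\xi^2)=I/I^2$ is free of rank one over $\Z_p^\cycl$ on the class $\bar\xi$ of $\xi$, and $\mu=\varphi^{-1}(\mu)\,\xi$ shows $\mu=(\zeta_p-1)\bar\xi$ there. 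Thus $(\zeta_p-1)\bigl(\bar u-\delta_1(\gamma)\bar\xi\bigr)=0$; since $\Z_p^\cycl$ is a domain and $\zeta_p-1\neq 0$, we obtain $\bar u=\delta_1(\gamma)\bar\xi$, i.e.\ $u$ lifts the class of $\delta_1(\gamma)\xi$.

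The argument is essentially bookkeeping; the points requiring care — and where I expect the (minor) obstacle to lie — are the compatibility of the $A_\inf(\Z_p^\cycl)$-module structures along $\TC^{-}\to\THH\to\HH$ needed for the linearity above (on $\pi_0$ the transition map is $\theta$, and on $\pi_2$ it is the edge map of the homotopy fixed point spectral sequence, hence $\pi_0$-linear), and the final cancellation, which relies on $\zeta_p-1=\theta(\varphi^{-1}(\mu))$ being a non-zero-divisor in $\Z_p^\cycl$, valid because $\Z_p^\cycl$ is a valuation ring. The remaining inputs — that $\mathrm{ctr}$ refines $\mathrm{Dtr}$, the injectivity of $\mathrm{can}$ on $\pi_2$, and $[\varepsilon]_\theta=q$ — are either standard or already recorded above.
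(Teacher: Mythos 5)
Your proposal is correct and follows essentially the same route as the paper's proof: identify the image of $\mathrm{ctr}(\varepsilon)$ in $(\xi)/(\xi^2)$ via \Cref{proposition_second_description_of_dennis_trace} as $\delta_1(\gamma)\mu$, relate $u$ to $\mathrm{ctr}(\varepsilon)$ through the factor $\varphi^{-1}(\mu)$ coming from $\mu=\varphi^{-1}(\mu)\xi$, and conclude using that $\theta(\varphi^{-1}(\mu))=\zeta_p-1$ acts as a non-zero-divisor on $(\xi)/(\xi^2)$. The only cosmetic difference is that you cancel $\varphi^{-1}(\mu)$ after passing to $\pi_2(\TC^-)$ via injectivity of $\mathrm{can}$, whereas the paper divides by $\varphi^{-1}(\mu)$ directly using the torsion-freeness of $(\xi)/(\xi^2)$.
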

\begin{proof}
  By definition
  $$
  \mathrm{can}(u)=\frac{\xi}{\mu}{\mathrm{ctr}(\varepsilon)}\in \pi_2(\mathrm{TP}(\Z_p^\cycl;\Z_p)).
  $$
  Now
  $$
  \frac{\xi}{\mu}=\frac{1}{\varphi^{-1}(\mu)}
  $$
  and $(\xi)/(\xi^2)$ is $\varphi^{-1}(\mu)$-torsion free as a module over $A_\inf(\Z_p^\cycl)$ (because
  $$
  \theta(\varphi^{-1}(\mu))=\zeta_p-1\neq 0\in \Z_p^\cycl).
  $$

  Moreover, the cyclotomic trace lifts the Dennis trace in Hochschild homology.
  Thus by \Cref{proposition_second_description_of_dennis_trace},
  $$
  \alpha_\gamma(\mathrm{Dtr}(\varepsilon))\equiv \delta_1(\gamma)([\varepsilon]-1) \in (\xi)/(\xi^2)
  $$
 and therefore
  $$
  u\equiv \delta_1(\gamma)\frac{[\varepsilon]-1}{\varphi^{-1}(\mu)}=\delta_1(\gamma)\xi \in (\xi)/(\xi^2)
  $$
  as desired.
\end{proof}

In particular, we see that the element
$$
\sigma\in \pi_2(\TP(\Z_p^\cycl;\Z_p))
$$
is a generator.
Set
$$
v:=\sigma^{-1}\in \pi_{-2}(\TC^-(\Z_p^\cycl;\Z_p))\overset{\mathrm{can}}{\cong}\pi_2(\TP(\Z_p^\cycl;\Z_p)).
$$
Then
$$
uv=\xi.
$$

Recall that for any morphism of rings $R\to A$ the negative cyclic homology is defined to be
$$
\HC^{-}(A/R):=\HH(A/R)^{h\mathbb{T}}
$$
where $(-)^{h\mathbb{T}}:=\varprojlim\limits_{B\mathbb{T}}(-)$, cf.\ \cite{hoyois_the_homotopy_fixed_points_of_the_circle_action_on_hochschild_homology} for a comparison with the classical definition in \cite[Definition 5.1.3]{loday_cyclic_homology}.
The homotopy fixed point spectral sequence
$$
H^i(B\mathbb{T},\pi_{-j}(\HH(A/R))\Rightarrow \pi_{-i-j}(\HC^{-}(A/R))
$$
endows $\pi_{\ast}(\HC^{-}(A/R))$ with a (multiplicative) decreasing filtration which we denote by
$$
\mathcal{N}^{\geq \bullet}\HC^{-}(A/R).
$$
We note that each generator $\gamma\in H_1(\mathbb{T},\Z)$ defines canonically a generator $\eta_\gamma\in H^2(B\mathbb{T},\Z)$. We will do the abuse of notation and denote by $\gamma\in H_1(\mathbb{T},R)$ the image of $\gamma\in H_1(\mathbb{T},\Z)$; similarly, for $\eta_\gamma$.

\begin{proposition}
  \label{sec:p-completed-dennis-1-proposition-making-hkr-and-so-on-explicit}
  Let $\gamma\in H_1(\mathbb{T},\Z)$ be a generator and assume $A=R/(f)$ for some non-zero divisor $f\in R$.  Then
  \begin{enumerate}
  \item $\HH_\ast(A/R)$ is concentrated in even degrees and the homotopy fixed point spectral sequence
    $$
    H^i(B\mathbb{T},\pi_{-j}(\HH(A/R))\Rightarrow \pi_{-i-j}(\HC^{-}(A/R))
    $$
    degenerates.
  \item There exists a unique element $\delta_2\in \{\pm 1\}$, independent of the choice of $\gamma$, such that the morphism
    $$
    (f)/(f)^2\xrightarrow{\alpha_\gamma} \pi_2(\HH(A/R))\xrightarrow{\tilde{\eta}_\gamma} \pi_0(\HC^{-}(A/R))/\mathcal{N}^{\geq 2}\HC^{-}(A/R)
    $$
    sends the class of $f$ to $\delta_2f\cdot 1_{\pi_0(\HC^{-}(A/R))/\mathcal{N}^{\geq 2}\HC^{-}(A/R)}$. Here the first isomorphism is the one of \Cref{lemma_second_hochschild_homology_for_a_surjection}. The second morphism is the multiplication by some lift $\tilde{\eta}_\gamma\in \pi_{-2}(\HC^{-}(A/R))$ of $\eta_\gamma\in H^2(B\mathbb{T},\pi_0(\HH(A/R))$\footnote{As we mod out by $\mathcal{N}^{\geq 2}$ and the spectral sequence degenerates, the second morphism does not depend on the choice of a lift.}.
    
  \end{enumerate}
\end{proposition}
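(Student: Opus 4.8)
The plan is to treat the two parts separately; part (1) is formal, while part (2) reduces to an explicit computation in a universal case together with a sign bookkeeping.

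For part (1), since $f$ is a non-zero divisor on $R$ the cotangent complex is $L_{A/R}\simeq ((f)/(f^2))[1]$ with $(f)/(f^2)$ an invertible $A$-module, so by the standard d\'ecalage isomorphism $\wedge^{i}L_{A/R}\simeq\Gamma^{i}_{A}((f)/(f^2))[i]$; hence the graded pieces of the HKR-filtration of \cite{bhatt_morrow_scholze_topological_hochschild_homology} are $\mathrm{gr}^{i}_{\mathrm{HKR}}\HH(A/R)\simeq\Gamma^{i}_{A}((f)/(f^2))[2i]$, concentrated in the single even degree $2i$. The HKR-filtration is exhaustive with increasingly connective steps, hence complete, and as its graded pieces lie in pairwise distinct degrees one gets $\HH_{2i}(A/R)\cong\Gamma^{i}_{A}((f)/(f^2))$ and $\HH_{\mathrm{odd}}(A/R)=0$. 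For the degeneration: $E^{ij}_{2}=H^{i}(B\mathbb{T},\pi_{-j}\HH(A/R))$ vanishes unless $i$ is even (as $H^{\mathrm{odd}}(B\mathbb{T},-)=0$) and $j$ is even (by the previous sentence), so it is supported in even $i+j$, whereas every $d_{r}$ with $r\geq 2$ changes $i+j$ by the odd number $1$ and therefore vanishes.

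For part (2), first observe that the degeneration in (1) exhibits $\pi_{0}(\HC^{-}(A/R))/\mathcal{N}^{\geq 2}$ as an extension of $\HH_{0}(A/R)=A$ by $\HH_{2}(A/R)=(f)/(f^2)$ in which the unit maps to $1$ and the image of $f\in R$ — which dies in $\HH_{0}(A/R)=A$ — lands in the subgroup $(f)/(f^2)$ as the generator $f$. The HKR-filtration of $\HH(A/R)$ is $\mathbb{T}$-equivariant, so $(-)^{h\mathbb{T}}$ produces a filtration on $\HC^{-}(A/R)$ with graded pieces $(\Gamma^{i}_{A}((f)/(f^2))[2i])^{h\mathbb{T}}$; since each such piece is a constant $\mathbb{T}$-module, this filtration is compatible with $\mathcal{N}^{\geq\bullet}$, and multiplication by $\tilde\eta_{\gamma}$ preserves it, acting on the relevant graded pieces — up to the sign by which the image of $\eta_{\gamma}$ in $\pi_{-2}(A^{h\mathbb{T}})$ differs from the standard generator — as multiplication by the invertible class $\eta\in\pi_{-2}((-)^{h\mathbb{T}})$. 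Tracing this through (using \Cref{lemma_second_hochschild_homology_for_a_surjection} for the first arrow), the composite $(f)/(f^2)\xrightarrow{\alpha_{\gamma}}\pi_{2}(\HH(A/R))\xrightarrow{\tilde\eta_{\gamma}}\pi_{0}(\HC^{-}(A/R))/\mathcal{N}^{\geq 2}$ becomes, up to an overall sign, the canonical inclusion $(f)/(f^2)=\mathcal{N}^{\geq 1}\pi_{0}(\HC^{-})/\mathcal{N}^{\geq 2}\hookrightarrow\pi_{0}(\HC^{-}(A/R))/\mathcal{N}^{\geq 2}$, $f\mapsto f\cdot 1$.

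It then remains to pin down the sign. Everything above is natural in $(R,f)$, and the composite is unaffected by the choice of lift $\tilde\eta_{\gamma}$ (the ambiguity lies in $\mathcal{N}^{\geq 4}\subseteq\mathcal{N}^{\geq 2}$), so via $\Z[t]\to R$, $t\mapsto f$, the computation is pulled back from the universal case $R=\Z[t]$, $f=t$, $A=\Z$, where $\HH_{*}(\Z/\Z[t])=\Gamma_{\Z}[\sigma]$ with $|\sigma|=2$, $(f)/(f^2)=\Z\cdot t$, $\tilde\eta_{\gamma}$ lifts a generator $v$ of $\mathcal{N}^{\geq 1}\pi_{-2}(\HC^{-}(\Z/\Z[t]))/\mathcal{N}^{\geq 2}$, and $\pi_{0}(\HC^{-}(\Z/\Z[t]))/\mathcal{N}^{\geq 2}=\Z\cdot 1\oplus\Z\cdot\sigma v$; a direct computation there gives $t\mapsto\pm(t\cdot 1)$, so the sign is an element $\delta_{2}\in\{\pm 1\}$. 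Its independence of $\gamma$ follows because $\alpha_{\gamma}$ (through $\mathrm{gr}^{1}_{\mathrm{HKR}}$) depends on $\gamma$ only via the degree-$1$ sign $\delta_{1}(\gamma)$ of \Cref{sec:p-completed-dennis-1-dennis-trace-in-degree-1}, while the canonical generator $\eta_{\gamma}\in H^{2}(B\mathbb{T},\Z)$ attached to $\gamma$ flips sign under $\gamma\mapsto-\gamma$, and these two sign changes cancel in the composite (equivalently, one simply rechecks the universal computation for the other generator of $H_{1}(B\mathbb{T},\Z)$). The main obstacle is bookkeeping rather than conceptual: one must verify carefully that the $\mathbb{T}$-equivariant HKR-filtration on $\HC^{-}(A/R)$ is compatible with $\mathcal{N}^{\geq\bullet}$, and check in the universal case that the image of $f$ in $\mathcal{N}^{\geq 1}\pi_{0}(\HC^{-})/\mathcal{N}^{\geq 2}$ is \emph{exactly} the $\eta$-multiple of the HKR-class of $f$ — not merely up to a unit of $A$ — which is what forces $\delta_{2}\in\{\pm 1\}$.
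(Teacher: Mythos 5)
Your proof is correct and follows essentially the same route as the paper: part (1) via the HKR-filtration with graded pieces $\wedge^i L_{A/R}[i]\simeq \Gamma^i_A((f)/(f^2))[2i]$ concentrated in even degrees, and part (2) by naturality reducing to the universal case $R=\Z[t]$, $f=t$, where both $\tilde\eta_\gamma(\alpha_\gamma(t))$ and $t\cdot 1$ generate the rank-one free $\Z$-module $\mathcal{N}^{\geq 1}/\mathcal{N}^{\geq 2}$, forcing $\delta_2\in\{\pm 1\}$, with independence of $\gamma$ following from the cancellation of the two sign flips in $\alpha_\gamma$ and $\tilde\eta_\gamma$. Your write-up supplies somewhat more detail on the degeneration and on the compatibility of the filtrations, but the underlying argument is the same.
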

\begin{proof}
  The first claim follows from the HKR-filtration as the exterior powers
  $$
  \wedge^iL_{A/R}[i]
  $$
  are concentrated in even degrees for all $i\geq 0$.
  For the second claim we can reduce by naturality to the universal case $R=\Z[x], f=x$ in which case it is well-known that the elements
  $$
  \tilde{\eta}_\gamma(\alpha_\gamma(f)),\ f\cdot 1_{\pi_0(\HC^{-}(A/R))/\mathcal{N}^{\geq 2}\HC^{-}(A/R)}
  $$
  are generators of the free $A$-module $\mathcal{N}^{\geq 1}\pi_0(\HC^-(A/R)/\mathcal{N}^{\geq 2}\pi_0(\HC^-(A/R)))$ of rank $1$. This implies the existence of $\delta_2$ as $A\cong \Z$. As the composition $\tilde{\eta}_\gamma\circ \alpha_\gamma$ is independent of the choice of $\gamma\in H_1(\mathbb{T},\Z)$ (because both $\alpha_\gamma$ and $\tilde{\eta}_\gamma$ will be changed by a sign), the proof is finished.
\end{proof}

\begin{remark}
  \label{sec:p-completed-dennis-1-remark-on-delta-1-for-spectral-sequence}
  We expect that $\delta_2=1$, but did not make the explicit computation, since we will not need it. 
\end{remark}

We need the following relation of $v$ to $\eta_\gamma$.
\begin{lemma}
  \label{sec:p-compl-cycl-1-images-of-v-and-eta-gamma}
Let $\underline{\eta_\gamma}, \underline{v}\in H^2(\mathbb{T},\pi_0(\HH(\Z_p^{\cycl};\Z_p)))$ be the images of $\eta_\gamma\in H^2(\mathbb{T},\Z)$ resp.\ $v\in \pi_{-2}(\TC^{-}(\Z_p^\cycl;\Z_p))$ under the canonical morphisms 
$$
H^2(\mathbb{T},\Z)\to H^2(\mathbb{T},\pi_0(\HH(\Z_p^{\cycl};\Z_p)))
$$ 
resp.\ 
$$
\pi_{-2}(\TC^{-}(\Z_p^\cycl;\Z_p))\to H^2(\mathbb{T},\pi_0(\HH(\Z_p^{\cycl};\Z_p)))
.$$ 
Then
$$
\underline{v}=\delta_2 \delta_{1}(\gamma)\underline{\eta_\gamma}.
$$
\end{lemma}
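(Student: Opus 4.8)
The plan is to push the relation $uv=\xi$ from $\pi_0(\TC^-(\Z_p^\cycl;\Z_p))=A_\inf(\Z_p^\cycl)$ into Hochschild homology over $A_\inf(\Z_p^\cycl)$ and to read off $\underline{v}$ by multiplicativity of the homotopy fixed point spectral sequence, using \Cref{sec:p-compl-cycl-2-chosen-u-lifts-xi} and \Cref{sec:p-completed-dennis-1-proposition-making-hkr-and-so-on-explicit}. By \Cref{lemma_p_completion_does_not_change_under_witt_vector_base} we have $\HH(\Z_p^\cycl;\Z_p)\simeq\HH(\Z_p^\cycl/A_\inf(\Z_p^\cycl);\Z_p)$, so the map $\THH(\Z_p^\cycl;\Z_p)\to\HH(\Z_p^\cycl;\Z_p)$ refining the Dennis trace induces, after $\mathbb{T}$-homotopy fixed points, a map of $\mathbb{E}_\infty$-rings $\phi\colon\TC^-(\Z_p^\cycl;\Z_p)\to\HC^-(\Z_p^\cycl/A_\inf(\Z_p^\cycl);\Z_p)$ compatible with the homotopy fixed point spectral sequences. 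I would first record --- this is the perfectoid comparison of \cite{bhatt_morrow_scholze_topological_hochschild_homology} --- that $\phi$ is strictly compatible with the Nygaard filtrations and induces the identity on $\pi_0=A_\inf(\Z_p^\cycl)$; here $\pi_0(\HC^-(\Z_p^\cycl/A_\inf;\Z_p))=\widehat{\prism}_{\Z_p^\cycl/A_\inf}=A_\inf(\Z_p^\cycl)$ carries the double-speed abutment filtration, which is again $\mathcal{N}^{\geq i}=(\xi^i)$ because the Nygaard filtration on $\prism_{\Z_p^\cycl/A_\inf}$ builds in the Frobenius twist, $\mathcal{N}^{\geq i}=\varphi^{-1}((\tilxi^i))$. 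Both spectral sequences degenerate (for $\HC^-$ by \Cref{sec:p-completed-dennis-1-proposition-making-hkr-and-so-on-explicit}(1)) and are concentrated in even total degree, so the relevant graded pieces are $E_\infty^{0,-2}=\pi_2(\HH(\Z_p^\cycl;\Z_p))$, $E_\infty^{2,0}=H^2(\mathbb{T},\pi_0(\HH(\Z_p^\cycl;\Z_p)))=H^2(\mathbb{T},\Z_p^\cycl)$, and $E_\infty^{2,-2}=\mathcal{N}^{\geq1}/\mathcal{N}^{\geq2}\,\pi_0(\HC^-(\Z_p^\cycl/A_\inf;\Z_p))=(\xi)/(\xi^2)$, free of rank one over $\Z_p^\cycl$ on the class of $\xi$.

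I would then feed in the two explicit inputs. Applying $\phi$ to $u$: by \Cref{sec:p-compl-cycl-2-chosen-u-lifts-xi} its leading term in $E_\infty^{0,-2}=\pi_2(\HH(\Z_p^\cycl;\Z_p))$ is $\delta_1(\gamma)\,\alpha_\gamma(\xi)$, where $\alpha_\gamma$ is the isomorphism of \Cref{lemma_second_hochschild_homology_for_a_surjection} attached to the surjection $A_\inf(\Z_p^\cycl)\to\Z_p^\cycl$ with kernel $(\xi)$. Applying $\phi$ to $v$: its leading term in $E_\infty^{2,0}$ is, by construction, $\underline{v}$. Applying $\phi$ to $uv=\xi$: since $\xi$ generates $\mathcal{N}^{\geq1}$, its leading term in $E_\infty^{2,-2}=(\xi)/(\xi^2)$ is the class of $\xi$. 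Multiplicativity of the spectral sequence then gives, in $(\xi)/(\xi^2)$,
\[
\xi \;=\; \delta_1(\gamma)\,\bigl(\alpha_\gamma(\xi)\cup\underline{v}\bigr),
\]
where $\cup\colon E_\infty^{0,-2}\times E_\infty^{2,0}\to E_\infty^{2,-2}$ is the product of the spectral sequence. On the other hand \Cref{sec:p-completed-dennis-1-proposition-making-hkr-and-so-on-explicit}(2), applied with $A=\Z_p^\cycl=A_\inf(\Z_p^\cycl)/(\xi)$, $R=A_\inf(\Z_p^\cycl)$ and $f=\xi$, identifies the composite $\alpha_\gamma(-)\cup\underline{\eta_\gamma}$ with the map of that proposition and computes $\alpha_\gamma(\xi)\cup\underline{\eta_\gamma}=\delta_2\,\xi$ in $(\xi)/(\xi^2)$. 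Since $\underline{\eta_\gamma}$ generates the free rank-one $\Z_p^\cycl$-module $E_\infty^{2,0}$, write $\underline{v}=c\,\underline{\eta_\gamma}$ with $c\in\Z_p^\cycl$; $\Z_p^\cycl$-bilinearity of $\cup$ turns the displayed equality into $\xi=\delta_1(\gamma)\delta_2\,c\,\xi$ in the free rank-one module $(\xi)/(\xi^2)$, hence $\delta_1(\gamma)\delta_2\,c=1$ and $c=\delta_2\delta_1(\gamma)$, which is the assertion.

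The step I expect to be the main obstacle is the first one: constructing $\phi$ and checking that it is the identity on $\pi_0=A_\inf(\Z_p^\cycl)$, compatibly with the Nygaard filtrations. This requires care with the Breuil--Kisin / Frobenius twists in the definition of the Nygaard filtration on prismatic cohomology --- it is exactly this twist that makes $\mathcal{N}^{\geq i}\prism_{\Z_p^\cycl/A_\inf}=(\xi^i)$ agree with the Nygaard filtration on $\pi_0(\TC^-)$, and that makes the distinguished element $f=\xi$ appearing in \Cref{sec:p-completed-dennis-1-proposition-making-hkr-and-so-on-explicit} coincide, modulo $\mathcal{N}^{\geq2}$, with $\phi(\xi)$. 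Everything else is bookkeeping with the degenerate homotopy fixed point spectral sequences.
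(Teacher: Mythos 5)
Your proof is correct and is essentially the paper's own argument: push the relation $uv=\xi$ along $\TC^-\to\HC^-$ and compare leading terms in the degenerate homotopy fixed point spectral sequence modulo $\mathcal{N}^{\geq 2}$, combining \Cref{sec:p-compl-cycl-2-chosen-u-lifts-xi} with \Cref{sec:p-completed-dennis-1-proposition-making-hkr-and-so-on-explicit}(2). One small imprecision in your setup: $\pi_0(\HC^-(\Z_p^\cycl/A_\inf(\Z_p^\cycl);\Z_p))$ is not $A_\inf(\Z_p^\cycl)=\widehat{\prism}_{\Z_p^\cycl/A_\inf(\Z_p^\cycl)}$ but rather the Hodge-completed derived de Rham cohomology (a completed divided power envelope of $A_\inf(\Z_p^\cycl)$ along $(\xi)$, with graded pieces $\Gamma^i((\xi)/(\xi^2))$); since your argument only ever uses the quotient by the second filtration step, where this ring does agree with $A_\inf(\Z_p^\cycl)/(\xi^2)$, the proof is unaffected.
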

\begin{proof}
  By \Cref{sec:p-compl-cycl-2-chosen-u-lifts-xi} we know that the image of $u$ in 
  $$
  \pi_2(\HH(\Z_p^\cycl;\Z_p)) \overset{\alpha_\gamma}{\cong} (\xi)/(\xi^2)
  $$
  is
$$
\alpha_\gamma(\delta_1(\gamma)\xi).
$$
As $\underline{\eta_\gamma}, \underline{v}\in H^2(\mathbb{T},\pi_0(\HH(\Z_p^{\cycl};\Z_p)))$ there exists some unit $r \in \Z_p^\cycl$ such that $r \underline{\eta_\gamma}=\underline{v}$. We can calculate in $\pi_0(\HC^{-}(\Z_p^\cycl;\Z_p))/\mathcal{N}^{\geq 2}\pi_0(\HC^{-}(\Z_p^\cycl;\Z_p))$
$$
\begin{matrix}
  \xi  =  uv  =  v\alpha_{\gamma}(\delta_1(\gamma)\xi)\\
 =  r\eta_\gamma \alpha_{\gamma}(\delta_1(\gamma)\xi) \\
=  r\delta_2\delta_1(\gamma) \xi
\end{matrix}
$$ 
using \Cref{sec:p-completed-dennis-1-proposition-making-hkr-and-so-on-explicit}.
Thus, $r=\delta_2\delta_1(\gamma)$.
\end{proof}

One has the following (important) additional property (which, up to changing $\xi$ by some unit, is implied by the conjunction of \cite[Proposition 6.2., Proposition 6.3.]{bhatt_morrow_scholze_topological_hochschild_homology}).

\begin{lemma}
  \label{sec:p-compl-cycl-3-varphi-sends-u-to-sigma}
  The cyclotomic Frobenius
  $$
  \varphi^{h\mathbb{T}}\colon \pi_2(\TC^-(\Z_p^{\cycl};\Z_p))\to \pi_2(\TP(\Z_p^\cycl;\Z_p))
  $$
  sends $u$ to $\sigma$.
\end{lemma}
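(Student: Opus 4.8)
The plan is to exploit the fact that the class $\mathrm{ctr}(\varepsilon)\in\pi_2(\TC^{-}(\Z_p^\cycl;\Z_p))$ has, by construction, two origins: on the one hand it is the image of a class in $\pi_2(\TC(\Z_p^\cycl;\Z_p))$, so by the defining fibre sequence of $\TC$ the difference $\mathrm{can}-\varphi^{h\mathbb{T}}$ kills it, i.e.\ $\mathrm{can}(\mathrm{ctr}(\varepsilon))=\varphi^{h\mathbb{T}}(\mathrm{ctr}(\varepsilon))$ in $\pi_2(\TP(\Z_p^\cycl;\Z_p))$; on the other hand $\mathrm{can}(\mathrm{ctr}(\varepsilon))=\mu\sigma$ by the very definition of $\sigma$. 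Comparing these two expressions, once I know how $\mathrm{ctr}(\varepsilon)$ and $u$ are related inside $\pi_2(\TC^{-})$, will force $\varphi^{h\mathbb{T}}(u)=\sigma$.

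First I would establish the identity
$$
\mathrm{ctr}(\varepsilon)=\varphi^{-1}(\mu)\cdot u \qquad\text{in }\pi_2(\TC^{-}(\Z_p^\cycl;\Z_p)),
$$
where $\varphi^{-1}(\mu)=q^{1/p}-1\in\pi_0(\TC^{-}(\Z_p^\cycl;\Z_p))=A_\inf(\Z_p^\cycl)$ acts through the graded ring structure. Since multiplication by $v$ gives an injection $\pi_2(\TC^{-})\hookrightarrow\pi_0(\TC^{-})=\widehat{\prism}_{\Z_p^\cycl}$ (\Cref{sec:prism-cohom-topol-1-comparsion-initial-and-topological-prism}), it suffices to compare the two sides after multiplying by $v$. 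Using $uv=\xi$ on the right and $\mathrm{can}$ being the identity on $\pi_0$ together with $\mathrm{can}(v)=\sigma^{-1}$ on the left (so $\mathrm{ctr}(\varepsilon)\cdot v=\mathrm{can}(\mathrm{ctr}(\varepsilon))\sigma^{-1}=\mu$), this reduces to the identity $\varphi^{-1}(\mu)\,\xi=\mu$, i.e.\ $\xi=\mu/\varphi^{-1}(\mu)$, which was already recorded in the proof of \Cref{sec:p-compl-cycl-1-lemma-cyclotomic-trace-of-epsilon-divisible-by-mu}.

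Next I would apply $\varphi^{h\mathbb{T}}$. Since $\varphi^{h\mathbb{T}}\colon\pi_\ast(\TC^{-}(\Z_p^\cycl;\Z_p))\to\pi_\ast(\TP(\Z_p^\cycl;\Z_p))$ is a homomorphism of graded rings which on $\pi_0$ is the Frobenius $\varphi$ of $A_\inf(\Z_p^\cycl)$, the displayed identity gives $\varphi^{h\mathbb{T}}(\mathrm{ctr}(\varepsilon))=\varphi(\varphi^{-1}(\mu))\,\varphi^{h\mathbb{T}}(u)=\mu\,\varphi^{h\mathbb{T}}(u)$. On the other hand $\varphi^{h\mathbb{T}}(\mathrm{ctr}(\varepsilon))=\mathrm{can}(\mathrm{ctr}(\varepsilon))=\mu\sigma$ by the first paragraph. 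Hence $\mu\,\varphi^{h\mathbb{T}}(u)=\mu\sigma$ in $\pi_2(\TP(\Z_p^\cycl;\Z_p))\cong A_\inf(\Z_p^\cycl)$, and since $A_\inf(\Z_p^\cycl)$ is an integral domain with $\mu=q-1\neq 0$, multiplication by $\mu$ is injective and we conclude $\varphi^{h\mathbb{T}}(u)=\sigma$.

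The part I would be most careful about is not any computation but the bookkeeping with the identifications $\pi_0(\TC^{-})\cong\pi_0(\TP)\cong A_\inf(\Z_p^\cycl)$: that $\mathrm{can}$ is the identity on $\pi_0$ and injective on $\pi_2$ (via $\pi_2(\TC^{-})\cong\mathcal{N}^{\geq 1}\widehat{\prism}_{\Z_p^\cycl}\hookrightarrow\widehat{\prism}_{\Z_p^\cycl}\cong\pi_2(\TP)$), and that both $\mathrm{can}$ and $\varphi^{h\mathbb{T}}$ are maps of graded rings so that scalars from $\pi_0$ may be pulled out of degree‑$2$ classes. All of this is part of the framework recalled in \Cref{sec:prism-cohom-topol} and in the discussion preceding the lemma, but it must be invoked at precisely the right spots; everything else is formal.
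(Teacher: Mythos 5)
Your proof is correct and is essentially the paper's argument: both hinge on $\varphi^{h\mathbb{T}}(\mathrm{ctr}(\varepsilon))=\mathrm{can}(\mathrm{ctr}(\varepsilon))$ (since $\mathrm{ctr}(\varepsilon)$ lifts to $\pi_2(\TC)$), the $\varphi$-semilinearity of $\varphi^{h\mathbb{T}}$ over $\pi_0$, and the identity $\varphi^{-1}(\mu)\,\xi=\mu$. The only difference is cosmetic bookkeeping: the paper writes $u=\tfrac{\xi}{\mu}\mathrm{ctr}(\varepsilon)$ and applies $\varphi$ to the (non-integral) scalar directly, whereas you record the integral relation $\mathrm{ctr}(\varepsilon)=\varphi^{-1}(\mu)\cdot u$ first and cancel $\mu$ at the end using that $A_{\inf}(\Z_p^{\cycl})$ is a domain.
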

\begin{proof}
  The cyclotomic Frobenius $\varphi^{h\mathbb{T}}$ is linear over the Frobenius on $A_\inf$. Thus we can calculate (note $\frac{\xi}{\mu}=\frac{1}{\varphi^{-1}(\mu)}$)
  $$
  \varphi^{h\mathbb{T}}(u)=\varphi(\frac{\xi}{\mu})\varphi^{h\mathbb{T}}(\mathrm{ctr}(\varepsilon))=\frac{1}{\mu}\varphi^{h\mathbb{T}}(\mathrm{ctr}(\varepsilon)).
  $$
  But
  $$
  \varphi^{h\mathbb{T}}(\mathrm{ctr}(\varepsilon))=\mathrm{can}(\mathrm{ctr}(\varepsilon))
  $$
  as the cyclotomic trace has image in $\pi_2(\TC(\Z_p^\cycl;\Z_p))$. This implies that
  $$
  \varphi^{h\mathbb{T}}(u)=\frac{\mathrm{ctr}(\varepsilon)}{\mu}=\sigma
  $$
  as desired.
\end{proof}

By \Cref{sec:p-compl-cycl-3-varphi-sends-u-to-sigma} one can conclude that there is a commutative diagram, whose vertical arrows are isomorphisms,
$$
\xymatrix{
  \pi_2(\TC(R;\Z_p))\ar[r]\ar[d]^{\beta_\varepsilon} & \pi_2(\TC^-(R;\Z_p)) \ar[r]^{\varphi^{h\mathbb{T}}-\mathrm{can}}\ar[d]^{v} & \pi_2(\TP(R;\Z_p))\ar[d]^{\sigma^{-1}}\\
  \widehat{\prism}_R^{\varphi=\tilxi}\ar[r] & \mathcal{N}^{\geq 1}\widehat{\prism}_R \ar[r]^{\frac{\varphi}{\tilxi}-1} & \widehat{\prism}_R
}
$$
for any quasi-regular semiperfectoid $\Z_p^\cycl$-algebra $R$. We remind the reader that the induced isomorphism
$$
\beta_\varepsilon \colon \pi_2(\TC(R;\Z_p))\cong \widehat{\prism}_R^{\varphi=\tilxi}
$$
depends only on $\varepsilon$, not on $\gamma$.

For a quasi-regular semiperfectoid ring $R$ we denote by
$$
[-]_{\tilde{\theta}}\colon R^\flat=\varprojlim\limits_{x\mapsto x^p}R \to \prism_R
$$
the Teichm\"uller lift. More precisely, the canonical morphism $R\to \overline{\prism}_R$ induces a morphism $\iota\colon R^\flat\to \overline{\prism}_R^\flat$ and $[-]_{\tilde{\theta}}$ is the composition of $\iota$ with the Teichm\"uller lift for the surjection
$$
\prism_R\to \overline{\prism}_R.
$$
We set\footnote{This agrees with the definition of $[-]_{\theta}$ made in the introduction.}
$$
[-]_{\theta}:=[(-)^{1/p}]_{\tilde{\theta}}.
$$
We will consider the $p$-adic Tate module
$$
T_pR^\times=\varprojlim\limits_{n\geq 0} R^\times[p^n]
$$
of $R^\times$
as being embedded into $R^\flat$ as the elements with first coordinate
equal to $1$.

We are ready to state and prove our main theorem.

\begin{theorem}
  \label{sec:conclusion-theorem-identification-of-cyclotomic-trace}
  Let $R$ be a quasi-regular semiperfectoid
  $\Z_p^{\mathrm{cycl}}$-algebra. Then the composition
$$
T_pR^\times\to
\pi_2(K(R;\Z_p))\xrightarrow{\mathrm{ctr}}\pi_2(\TC(R;\Z_p))\overset{\beta_\varepsilon}\cong
\widehat{\prism}^{\varphi=\tilxi}_R
$$
is given by sending $x\in T_p(R^\times)$ to
$$
\log_q([x]_{\theta})=\sum\limits_{n=1}^\infty
(-1)^{n-1}q^{-n(n-1)/2}\frac{([x]_{\theta}-1)([x]_{\theta}-q)\cdots
  ([x]_{\theta}-q^{n-1})}{[n]_q}.
$$
\end{theorem}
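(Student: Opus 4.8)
The plan is to run the strategy sketched in the introduction: reduce to a transversal prism, then to the first graded piece of the Nygaard filtration, and there recognize the map as the $p$-completed Dennis trace of \Cref{sec:p-completed-dennis}. The first step is a reduction to a universal case. All the data in sight — the composite $x\mapsto\beta_\varepsilon(\mathrm{ctr}(x))$, the Teichm\"uller lift $[-]_{\theta}$, the assignment $R\mapsto(\widehat{\prism}_R,\tilxi)$ with its Nygaard filtration, and the $q$-logarithm — are functorial in the quasi-regular semiperfectoid $\Z_p^{\mathrm{cycl}}$-algebra $R$, and $\varepsilon,q,u,v,\sigma$ are pulled back from $\Z_p^{\mathrm{cycl}}$ in all cases. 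For $x=(r_0,r_1,\dots)\in T_p(R^{\times})$ the rule $t^{1/p^n}\mapsto r_{n-1}$ (with $r_{-1}:=1$, which makes sense since $r_0^p=1$ and $R$ is $p$-complete) defines a $\Z_p^{\mathrm{cycl}}$-algebra map $R_0:=\Z_p^{\mathrm{cycl}}\langle t^{1/p^\infty}\rangle/(t-1)\to R$ carrying the tautological element $\underline t:=(1,t^{1/p},t^{1/p^2},\dots)\in R_0^{\flat}$, which lies in $T_p(R_0^{\times})$, to $x$. Since $R_0$ is quasi-regular semiperfectoid and $p$-torsion free, the prism $(\widehat{\prism}_{R_0},\tilxi)$ is transversal (cf.\ the discussion after \Cref{sec:transversal-prisms-corollary-injectivity-for-reduction-modulo-second-nygaard-filtration}), so by functoriality it suffices to prove the formula for $(R_0,\underline t)$.

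Next I would reduce the comparison to the quotient $\mathcal N^{\geq1}\widehat{\prism}_{R_0}/\mathcal N^{\geq2}\widehat{\prism}_{R_0}$. Both $\beta_\varepsilon(\mathrm{ctr}(\underline t))$ and $\log_q([\underline t]_{\theta})$ lie in $\widehat{\prism}_{R_0}^{\varphi=\tilxi}$: the former by the construction of $\beta_\varepsilon$; for the latter, $[\underline t]_{\theta}$ is of rank one and $\varphi([\underline t]_{\theta}-1)=[\underline t]_{\theta}^p-1\in\tilxi\widehat{\prism}_{R_0}$, because $[\underline t]_{\theta}^p$ is the Teichm\"uller lift of $\underline t$ and hence reduces to $1$ modulo $\tilxi$; so $[\underline t]_{\theta}\in1+\mathcal N^{\geq1}\widehat{\prism}_{R_0}$, \Cref{sec:q-logarithm-convergence-of-q-logarithm} applies, and $\varphi(\log_q([\underline t]_{\theta}))=\tilxi\log_q([\underline t]_{\theta})$ follows in the universal flat case after base change to $\Q_p[[q-1]]$, where $\log_q=\tfrac{q-1}{\log q}\log$ by \Cref{lemma_properties_q_logarithm}. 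Because $(\widehat{\prism}_{R_0},\tilxi)$ is transversal, \Cref{sec:transversal-prisms-corollary-injectivity-for-reduction-modulo-second-nygaard-filtration} shows that the reduction $\widehat{\prism}_{R_0}^{\varphi=\tilxi}\hookrightarrow\mathcal N^{\geq1}\widehat{\prism}_{R_0}/\mathcal N^{\geq2}\widehat{\prism}_{R_0}$ is injective, so it is enough to compare the two classes modulo $\mathcal N^{\geq2}$. On the $q$-logarithm side this is already done: $\log_q([\underline t]_{\theta})\equiv[\underline t]_{\theta}-1\pmod{\mathcal N^{\geq2}}$ by \Cref{sec:q-logarithm-convergence-of-q-logarithm}.

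For the cyclotomic-trace side I would unwind $\beta_\varepsilon$ using the commutative square after \Cref{sec:p-compl-cycl-3-varphi-sends-u-to-sigma}, which turns $\beta_\varepsilon$ into multiplication by $v$ on $\pi_2(\TC^{-}(R_0;\Z_p))$; combine this with the identification from \Cref{sec:prism-cohom-topol} and \cite{bhatt_morrow_scholze_topological_hochschild_homology} (together with \Cref{lemma_p_completion_does_not_change_under_witt_vector_base}) of $\mathcal N^{\geq1}\widehat{\prism}_{R_0}/\mathcal N^{\geq2}\widehat{\prism}_{R_0}$ with $\pi_2(\HH(R_0;\Z_p))$, which for $R_0=S/(t-1)$ with $S=\Z_p^{\mathrm{cycl}}\langle t^{1/p^\infty}\rangle$ the perfectoid cover further identifies with $\pi_2(\HH(R_0/S;\Z_p))\cong((t-1)/(t-1)^2)^{\wedge}_p$ via \Cref{lemma_second_hochschild_homology_for_a_surjection}; and use that the cyclotomic trace refines the Dennis trace. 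The role of \Cref{sec:p-compl-cycl-1-images-of-v-and-eta-gamma} and \Cref{sec:p-completed-dennis-1-proposition-making-hkr-and-so-on-explicit} is precisely to keep track of the signs $\delta_1(\gamma)$ and $\delta_2$ along this chain so that the composite $T_p(R_0^{\times})\xrightarrow{\mathrm{ctr}}\pi_2(\TC(R_0;\Z_p))\xrightarrow{\beta_\varepsilon}\widehat{\prism}_{R_0}^{\varphi=\tilxi}\twoheadrightarrow\mathcal N^{\geq1}/\mathcal N^{\geq2}$ becomes the $p$-completed Dennis trace on the nose, with the signs cancelling. Then \Cref{proposition_second_description_of_dennis_trace} evaluates it: $\underline t$ is sent to the class of $t-1$, which under the above identifications is the class of $[\underline t]_{\theta}-1$. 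Combined with the previous paragraph this gives $\beta_\varepsilon(\mathrm{ctr}(\underline t))\equiv[\underline t]_{\theta}-1\equiv\log_q([\underline t]_{\theta})\pmod{\mathcal N^{\geq2}}$, and \Cref{sec:transversal-prisms-corollary-injectivity-for-reduction-modulo-second-nygaard-filtration} yields the theorem.

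The hard part will be this last step. Everything before it is either formal (the reduction to $(R_0,\underline t)$) or directly supplied by earlier sections (the injectivity for transversal prisms, the convergence and functional equation of $\log_q$); the genuine work is to follow the class $\mathrm{ctr}(\underline t)$ through the definition of $\beta_\varepsilon$ and the chain $\pi_2(\TC)\to\pi_2(\TC^{-})\xrightarrow{v}\mathcal N^{\geq1}\widehat{\prism}_{R_0}\to\mathcal N^{\geq1}/\mathcal N^{\geq2}\cong\pi_2(\HH(R_0/S;\Z_p))$, to match the outcome with the explicit formula of \Cref{proposition_second_description_of_dennis_trace}, to see that the resulting class is that of $[\underline t]_{\theta}-1$, and to verify that each occurrence of $\delta_1(\gamma)$ and $\delta_2$ cancels — so that the final answer carries no sign and is independent of the chosen generator $\gamma$ of $H_1(\mathbb{T},\Z)$. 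Once this bookkeeping is in place, the transversal-prism injectivity of \Cref{sec:some-results-prisms} and the properties of the $q$-logarithm from \Cref{section-q-logarithm} close the argument.
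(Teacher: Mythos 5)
Your proposal follows the paper's own proof essentially step for step: reduction to the universal case $\Z_p^{\mathrm{cycl}}\langle t^{1/p^\infty}\rangle/(t-1)$, injectivity of $\widehat{\prism}_R^{\varphi=\tilxi}\to\mathcal{N}^{\geq1}\widehat{\prism}_R/\mathcal{N}^{\geq2}\widehat{\prism}_R$ for transversal prisms, and identification of both sides modulo $\mathcal{N}^{\geq2}$ with $[x]_{\theta}-1$, via \Cref{sec:q-logarithm-convergence-of-q-logarithm} on the $q$-logarithm side and via the Dennis-trace computation (\Cref{proposition_second_description_of_dennis_trace}) together with the sign bookkeeping of \Cref{sec:p-completed-dennis-1-proposition-making-hkr-and-so-on-explicit} and \Cref{sec:p-compl-cycl-1-images-of-v-and-eta-gamma} on the trace side. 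The only cosmetic differences are your use of the perfectoid cover $\Z_p^{\mathrm{cycl}}\langle t^{1/p^\infty}\rangle$ where the paper works with $W(R^\flat)$ and $J/J^2$, and your explicit verification that $\log_q([x]_{\theta})$ lies in $\widehat{\prism}_R^{\varphi=\tilxi}$, which the paper leaves implicit.
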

\begin{proof}
  Replacing $R$ by the universal case $\Z_p^{\mathrm{cycl}}\langle x^{1/p^\infty}\rangle/(x-1)$ we may assume that $R$ is $p$-torsion free and (thus) that $(\widehat{\prism}_R,(\tilxi))$ is transversal (by \Cref{lemma_p_torsionfree_implies_nonzerodivisor} it suffices to see that $(p,\xi)$ is a regular sequence which follows as $\widehat{\prism}_R/\xi\cong \widehat{L\Omega_{R/\Z_p^\cycl}}$, by \cite[Theorem 7.2.(5)]{bhatt_morrow_scholze_topological_hochschild_homology}, is $p$-torsion free).

  Let us define
$$
\mathrm{ctr}_2\colon T_pR^\times\to
\pi_2(K(R;\Z_p))\xrightarrow{\mathrm{ctr}}\pi_2(\TC(R;\Z_p)).
$$
By \Cref{sec:prism-cohom-topol-1-comparsion-initial-and-topological-prism} the canonical morphism
$$
\iota\colon \prism_R\to \pi_0(\TC^{-}(R;\Z_p))
$$
is compatible with the Nygaard filtrations and identifies
$\pi_0(\TC^{-}(R;\Z_p))$ with the Nygaard completion $\widehat{\prism}_R$ of $\prism_R$.
By \Cref{sec:transversal-prisms-corollary-injectivity-for-reduction-modulo-second-nygaard-filtration} the morphism
$$
\prism^{\varphi=\tilxi}_R\hookrightarrow \mathcal{N}^{\geq 1}\prism_R/\mathcal{N}^{\geq 2}\prism_R\cong \mathcal{N}^{\geq 1}\widehat{\prism}_R/\mathcal{N}^{\geq 2}\widehat{\prism}_R
$$
is injective.
Hence it suffices to show that the two morphisms $\mathrm{log}_q([-]_\theta)$ and $\beta_\varepsilon \circ \mathrm{ctr}$ agree modulo $\mathcal{N}^{\geq 2}\widehat{\prism}_R$.
Multiplication by the element $v\in \pi_{-2}(\TC^{-}(\Z_p^{\cycl};\Z_p))$ constructed after \Cref{sec:p-compl-cycl-2-chosen-u-lifts-xi} and the HKR-isomorphism (which depends on $\gamma$) induce an isomorphism
$$
J/J^2 \overset{\alpha_\gamma}{\cong} \pi_2(\THH(R;\Z_p))\overset{v}{\cong}\mathcal{N}^{\geq 1}\widehat{\prism}_R/\mathcal{N}^{\geq 2}\widehat{\prism}_R
$$
where $J$ is the kernel of the surjection
$$
\theta\colon W(R^\flat)\to R.
$$
By \Cref{sec:p-completed-dennis-1-proposition-making-hkr-and-so-on-explicit} and  \Cref{sec:p-compl-cycl-1-images-of-v-and-eta-gamma}
this isomorphism sends the class of $j\in J$ to 
$$
\delta_2^2\delta_1(\gamma)\cdot j\cdot 1_{\widehat{\prism}_R/\mathcal{N}^{\geq 2}\widehat{\prism}_R}=\delta_1(\gamma)\cdot j\cdot 1_{\widehat{\prism}_R/\mathcal{N}^{\geq 2}\widehat{\prism}_R}.
$$ 
for the canonical $W(R^{\flat})$-algebra structure on
$$
\widehat{\prism}_R/\mathcal{N}^{\geq 2}\widehat{\prism}_R\cong \pi_0(\TC^{-}(R;\Z_p))/\mathcal{N}^{\geq 2}\pi_0(\TC^-(R;\Z_p))
$$
$$
\cong \pi_0(\HC^-(R/W(R^\flat)))/\mathcal{N}^{\geq 2}\pi_0(\HC^-(R/W(R^\flat)))
$$
(which lifts the morphism $\theta$).
Let $x\in T_pR^\times$.
By \Cref{sec:q-logarithm-convergence-of-q-logarithm}
$$
\mathrm{log}_q([x]_\theta)\equiv [x]_\theta-1 \textrm{ mod } \mathcal{N}^{\geq 2}\widehat{\prism}_R.
$$
On the other hand, as the cyclotomic trace reduces to the Dennis trace $\mathrm{Dtr}$,
we can calculate using \Cref{proposition_second_description_of_dennis_trace} and \Cref{sec:p-compl-cycl-1-images-of-v-and-eta-gamma}
$$
\beta_{\varepsilon}(\mathrm{ctr}(x))\equiv v\mathrm{Dtr}(x)
$$
$$
=v \delta_1(\gamma)([x]_{\theta}-1) =\delta_1(\gamma)^2([x]_\theta-1)\cdot 1_{\widehat{\prism}_R/\mathcal{N}^{\geq 2}\widehat{\prism}_R} \textrm{ mod } \mathcal{N}^{\geq 2}\widehat{\prism}_R
$$
$$
=([x]_{\theta}-1) \textrm{ mod } \mathcal{N}^{\geq 2}\widehat{\prism}_R
$$
Thus we can conclude
$$
\mathrm{log}_q([x]_\theta)=\beta_\varepsilon \circ\mathrm{ctr}(x)
$$
as desired.
\end{proof}

\begin{corollary}
\label{sec:conclusion-corollary-bijectivity-qlog}
Let $R$ be a quasi-regular semiperfectoid $\Z_p^{\rm cycl}$-algebra. The map
\[ \log_q([-]_\theta) \colon T_p(R^{\times}) \to \widehat{\prism}_R^{\varphi=\tilxi} \]
is a bijection.
\end{corollary}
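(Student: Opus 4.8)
The plan is to deduce \Cref{sec:conclusion-corollary-bijectivity-qlog} formally from \Cref{sec:conclusion-theorem-identification-of-cyclotomic-trace} together with the comparison of $p$-adic $K$-theory with connective $\TC$ due to Clausen--Mathew--Morrow. By \Cref{sec:conclusion-theorem-identification-of-cyclotomic-trace}, the map $\log_q([-]_\theta)$ coincides with the composition
$$
T_p(R^\times)\xrightarrow{\ \iota\ }\pi_2(K(R;\Z_p))\xrightarrow{\ \mathrm{ctr}\ }\pi_2(\TC(R;\Z_p))\overset{\beta_\varepsilon}{\cong}\widehat{\prism}_R^{\varphi=\tilxi},
$$
where $\iota$ is the canonical map from the Tate module of units recalled in \Cref{sec:p-completed-dennis} and $\beta_\varepsilon$ is an isomorphism by construction. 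Hence it suffices to prove that $\mathrm{ctr}\circ\iota$ is a bijection.

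For the cyclotomic trace this is immediate: since $R$ is $p$-complete (so that $(R,(p))$ is a henselian pair) and $R/p$ is semiperfect, \cite[Corollary 6.9]{clausen_mathew_morrow_k_theory_and_topological_cyclic_homology_of_henselian_pairs} shows that $\mathrm{ctr}$ identifies $K(R;\Z_p)$ with $\tau_{\geq 0}\TC(R;\Z_p)$; in particular $\mathrm{ctr}\colon\pi_2(K(R;\Z_p))\to\pi_2(\TC(R;\Z_p))$ is an isomorphism. So everything comes down to showing that $\iota\colon T_p(R^\times)\to\pi_2(K(R;\Z_p))$ is bijective. Injectivity is formal: the universal coefficient sequence for the spectrum $K(R)$ gives a surjection $\pi_2(K(R;\Z_p))\twoheadrightarrow T_p(K_1(R))$, and precomposing with $\iota$ recovers $T_p$ applied to the inclusion $R^\times\hookrightarrow K_1(R)$, which is split injective via the determinant; hence $\iota$ is (split) injective.

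Surjectivity of $\iota$ is the heart of the matter and I expect it to be the main obstacle. Here I would use the description of the $K$-theory of quasi-regular semiperfectoid rings in low degrees through the syntomic (motivic) filtration on $\TC$ coming from \cite{bhatt_morrow_scholze_topological_hochschild_homology}: under $\mathrm{ctr}$ the group $\pi_2(K(R;\Z_p))$ is the $H^0$ of the weight-one syntomic complex $\mathrm{fib}(\mathcal{N}^{\geq 1}\widehat{\prism}_R\xrightarrow{1-\varphi/\tilxi}\widehat{\prism}_R)\cong\widehat{\prism}_R^{\varphi=\tilxi}$, whose $H^1$ is $\pi_1(K(R;\Z_p))\cong(R^\times)^\wedge_p$; matching this against the universal coefficient sequence above identifies $\iota$ with the canonical isomorphism $T_p(R^\times)\cong H^0$ induced by $(R^\times)^\wedge_p\cong H^1$ and the Bockstein for $\Z/p^n$-coefficients. (Equivalently, one may just cite that the first $p$-adic Chern class $T_p(R^\times)\to\pi_2(K(R;\Z_p))$ is an isomorphism for rings with semiperfect reduction modulo $p$ --- the one genuinely external input beyond \Cref{sec:conclusion-theorem-identification-of-cyclotomic-trace} and \cite{clausen_mathew_morrow_k_theory_and_topological_cyclic_homology_of_henselian_pairs}.) Granting this, $\log_q([-]_\theta)=\beta_\varepsilon\circ\mathrm{ctr}\circ\iota$ is a composition of bijections, which proves the corollary.
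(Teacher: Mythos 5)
Your factorization $\log_q([-]_\theta)=\beta_\varepsilon\circ\mathrm{ctr}\circ\iota$, and the appeal to \cite[Corollary 6.9]{clausen_mathew_morrow_k_theory_and_topological_cyclic_homology_of_henselian_pairs} for bijectivity of $\mathrm{ctr}$ on $\pi_2$, match the paper, and your injectivity argument for $\iota$ (determinant splitting plus the universal coefficient sequence) is fine. The gap is exactly where you suspect it: surjectivity of $\iota\colon T_p(R^\times)\to\pi_2(K(R;\Z_p))$ for an \emph{arbitrary} quasi-regular semiperfectoid $R$. There is no citable result asserting that the first $p$-adic Chern class is an isomorphism for all rings with semiperfect reduction modulo $p$; given \cite{clausen_mathew_morrow_k_theory_and_topological_cyclic_homology_of_henselian_pairs} and \Cref{sec:conclusion-theorem-identification-of-cyclotomic-trace}, that assertion is equivalent to the corollary you are proving, so invoking it as an ``external input'' is essentially circular. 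Your fallback sketch via the syntomic filtration also assumes $\pi_1(K(R;\Z_p))\cong (R^\times)^\wedge_p$, which is not obvious for non-local $R$ (one must control $SK_1(R)$ and $T_pK_0(R)$), and even granting it, the compatibility of the Bockstein boundary maps with the syntomic complex would need a genuine argument.

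The paper's way around this is quasi-syntomic descent: both $T_p((-)^\times)$ and $\widehat{\prism}_{(-)}^{\varphi=\tilxi}$ are quasi-syntomic sheaves, so to prove that the map $\log_q([-]_\theta)$ between them is an isomorphism it suffices to check it on a basis of the quasi-syntomic site; as in \cite[Proposition 7.17]{bhatt_morrow_scholze_topological_hochschild_homology} one may take this basis to consist of $w$-local quasi-regular semiperfectoid rings with divisible unit group, and for those rings $\iota$ \emph{is} a bijection --- this special case is where the low-degree $K$-theory is actually under control. Note that one cannot run the descent on the statement about $\iota$ itself, since $\pi_2(K(-;\Z_p))$ is not a quasi-syntomic sheaf; this is precisely why the descent must be applied to the composite $\log_q([-]_\theta)$ rather than to the Chern class map. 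Inserting this reduction step is what is needed to complete your argument.
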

\begin{proof}
Since both sides satisfy quasi-syntomic descent\footnote{For $T_p(-)^\times$ this follows from $p$-completely faithfully flat descent on $p$-complete rings with bounded $p^\infty$-torsion, cf.\ \cite[Appendix]{anschuetz_le_bras_prismatic_dieudonne_theory}, for $\widehat{\prism}_R^{\varphi=\tilxi}$ this is is proven in \cite{bhatt_morrow_scholze_topological_hochschild_homology}.}, one can assume, as in \cite[Proposition 7.17]{bhatt_morrow_scholze_topological_hochschild_homology}, that $R$ is $w$-local and such that $R^{\times}$ is divisible. In this case, the map
\[ T_p(R^{\times}) \to \pi_2(K(R;\Z_p)) \]
is a bijection. Moreover, \cite[Corollary 6.9]{clausen_mathew_morrow_k_theory_and_topological_cyclic_homology_of_henselian_pairs} shows that 
\[ \mathrm{ctr} : \pi_2(K(R;\Z_p))\to \pi_2(\TC(R;\Z_p)) \]
is also bijective. As by \Cref{sec:conclusion-theorem-identification-of-cyclotomic-trace}, the composite of these two maps is
the map $\log_q([-]_{\tilde{\theta}})$, this proves the corollary.
\end{proof}

\begin{remark} 
As explained at the end of the introduction, one can give a direct and more elementary proof of \Cref{sec:conclusion-corollary-bijectivity-qlog} when $R$ is the quotient of a perfect ring by a finite regular sequence (\cite{scholze_weinstein_moduli_of_p_divisible_groups}) or when $R$ is a $p$-torsion free quotient of a perfectoid ring by a finite regular sequence and $p$ is odd. But we do not know how to prove it directly in general.
\end{remark}

\bibliography{biblio.bib}
\bibliographystyle{plain}

\end{document}